\documentclass[reqno]{amsart}
\usepackage[english]{babel}
\usepackage[T1]{fontenc}
\usepackage[utf8]{inputenc}
\usepackage{amsmath}
\usepackage{amsfonts}
\usepackage{array}
\usepackage{eurosym}

\usepackage{amssymb}
\usepackage{mathrsfs}
\usepackage{amsthm}
\usepackage{xfrac}
\usepackage{lmodern}
\usepackage{fix-cm}
\usepackage{listings}
\usepackage{fancyvrb}
\usepackage{enumerate}   
\usepackage{xcolor}
\usepackage{pgf,tikz}
\usepackage{mathrsfs}
\usetikzlibrary{arrows}
\usepackage{float}
\usepackage{subcaption}
\usepackage{hyperref}{}
\usetikzlibrary{patterns}

\usepackage{wasysym}
\usepackage{enumitem}

\theoremstyle{plain}
\newtheorem{theorem}{Theorem}[section]
\newtheorem{lemma}[theorem]{Lemma}
\newtheorem{proposition}[theorem]{Proposition}
\newtheorem{corollary}[theorem]{Corollary}

\newtheorem{conjecture}[theorem]{Conjecture}

\theoremstyle{definition}
\newtheorem{definition}[theorem]{Definition}
\newtheorem{setup}[theorem]{Setup}
\newtheorem{example}[theorem]{Example}
\newtheorem{question}[theorem]{Question}

\newtheorem*{acknowledgement}{Acknowledgement}
\theoremstyle{remark}
\newtheorem{remark}[theorem]{Remark}

\newcommand{\ini}{\textsl{in}_<}
\newcommand{\vv}{\mathbf{v}}

\newcommand{\CC}{\mathcal{C}}
\newcommand{\lk}{\text{lk}_{\Delta_<}}
\usepackage{lscape}
\usepackage{rotating}

\newcommand{\overbar}[1]{\mkern 1.5mu\overline{\mkern-3mu#1\mkern-1.5mu}\mkern 1.5mu}

\newcommand{\rad}{1.5 pt}
\usetikzlibrary{decorations.markings}
\usepackage{subcaption}
\renewcommand{\thesubfigure}{\Alph{subfigure}}

\title{$(S_2)$-condition and Cohen-Macaulay binomial edge ideals}
\author{Alberto Lerda}
\address{University of Trento}
\email{alberto.lerda@studenti.unitn.it}
\author{Carla Mascia}
\address{University of Trento}
\email{carla.mascia@unitn.it}
\author{Giancarlo Rinaldo}
\address{University of Trento}
\email{giancarlo.rinaldo@unitn.it}
\author{Francesco Romeo}
\address{University of Trento}
\email{francesco.romeo-3@unitn.it }

\begin{document}

\maketitle
\begin{abstract}
We describe the simplicial complex $\Delta$ such that the initial ideal of $J_G$ is the Stanley–Reisner ideal of $\Delta$. By $\Delta$ we show that if $J_G$ is $(S_2)$ then $G$ is accessible. We also characterize all accessible blocks with whiskers of cycle rank 3 and we define a new infinite class of accessible blocks with whiskers for any cycle rank. 
Finally, by using a computational approach, we show that the graphs with at most 12 vertices whose binomial edge ideal is Cohen-Macaulay are all and only the accessible ones. 

\end{abstract}
\section*{Introduction}

Binomial edge ideals have been introduced  in \cite{HHHKR} and, independently, in \cite{Oh}. They are associated to finite simple graphs, in fact they arise from the $2$-minors of a $2 \times n$ matrix related to the edges of a graph with $n$ vertices.
The problem of finding a characterization of Cohen–Macaulay binomial edge ideals has been studied intensively by many authors. There are several attempts at this problem available for some families of graphs. Some papers in this direction are \cite{EHH}, \cite{RR}, \cite{R1}, \cite{KS}, \cite{BN}, \cite{BMS}, \cite{R2}, \cite{JK}, \cite{Mo}, \cite{ERT},  and \cite{BMS2}. In the latter, the authors introduce two combinatorial properties strictly related to the Cohen-Macaulayness of binomial edge ideals: accessibility and strongly unmixedness. 
In particular, they prove  
\[
J_G \mbox{ strongly unmixed} \implies J_G \mbox{ Cohen-Macaulay} \implies G \mbox{ accessible}.
\]
In the same article, they show that the three conditions are equivalent for chordal and traceable  graphs.

On the other hand, a fundamental condition to describe Cohen-Macaulay modules is the so-called \emph{Serre's condition} $(S_r)$. N. Terai, in \cite{Te}, translates this condition into nice combinatorial terms for the class of squarefree monomial ideals. In general, for any ideal $I \subseteq S$, it holds true 
\[
S/I \mbox{ Cohen-Macaulay} \implies S/I \mbox{ satisfies Serre's condition } (S_2).
\]

The main aim of this work is to combine all the above-mentioned algebraic and combinatorial notions, showing that 
\[
S/J_G \mbox{ satisfies Serre's condition } (S_2) \implies G \mbox{ accessible},
\]
and finding a large family of graphs that satisfies all of them. To reach the goal, in Section \ref{sec: S2}, we describe the simplicial complex $\Delta_<$ such that $\ini(J_G) = I_{\Delta_<}$, for any term order $<$. It is well known that $\ini(J_G)$ is a squarefree monomial ideal. In \cite{CV}, the authors prove that a binomial edge ideal $J_G$ satisfies the Serre's condition $(S_2)$ if and only if $\ini(J_G)$ satisfies it, as well. We exploit this fact and the knowledge of $\Delta_<$ to prove that if $J_G$ satisfies $(S_2)$-condition, then $G$ is accessible, improving the results of \cite{BMS2}.

In Section \ref{sec: blocks of acc graphs}, we focus on accessible graphs. In particular, in Proposition \ref{pro:block_whiskers} we show that any accessible graph induces, in a natural way, blocks with whiskers that are accessible, too. The latter gives us a sufficient condition for having non-Cohen-Macaulay binomial edge ideals. In literature, many of the examples of non-Cohen-Macaulay $J_G$ are blocks with whiskers (see \cite{R1}, \cite{R2}, \cite{BMS}, and \cite{BMS2}). This fact and Proposition \ref{pro:block_whiskers} motivate us to study accessible blocks with whiskers. In particular, we identify all the blocks with whiskers having cycle rank 3 (See Figure \ref{Fig:ClassesCR3}) and among them we characterize the accessible ones (see Figures \ref{fig:chainsCR3} and \ref{fig:K4CR3}). This represents a further step in the study of graphs with a given cycle rank, following the $3$rd author's works done in \cite{R1} and \cite{R2}, where he classifies the complete intersection ideals by means of cycle rank (0 in that case), and all the Cohen-Macaulay graphs with cycle rank 1 and 2. Moreover, we observe that the number of blocks with whiskers of a given cycle rank is finite  (Lemma \ref{lem:cyclerank} and Lemma \ref{lem:P2_P3}). 
We define a rich family of blocks with whiskers of a given cycle rank that we call \textit{chain of cycles} (see Definition \ref{def:chaincycle}), and we provide necessary conditions for being accessible. Finally, under certain hypotheses on the structure of these graphs (see Setup \ref{setup: chain}), we find an infinite subfamily of chain of cycles $G$ for which all the above-mentioned algebraic and combinatorial properties for $G$ and $J_G$ are equivalent (see Theorem \ref{theo:chaincycle}).

In the last section, we give a computational classification of all the indecomposable Cohen-Macaulay binomial edge ideals of graphs with at most 12 vertices (see Theorem \ref{Theo:ByComputer}). This result has been obtained by using a C++ implementation of the algorithms related to the combinatorial properties of accessibility, $(S_2)$-condition and strongly-unmixedness. The implementation is freely downloadable from the website \cite{LMRR}. This computation and Theorem \ref{theo:S2} lead us to the following.

\begin{conjecture}
Let $G$ be a graph. Then $G$ is accessible if and only if $S/J_G$ satisfies Serre's condition $(S_2)$. 
\end{conjecture}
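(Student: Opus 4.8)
Since the paper already establishes the implication $S/J_G \text{ satisfies } (S_2) \Rightarrow G \text{ accessible}$ (Theorem \ref{theo:S2}), proving the conjecture amounts to establishing the reverse implication
\[
G \text{ accessible} \implies S/J_G \text{ satisfies } (S_2).
\]
The plan is to stay on the squarefree side throughout. By \cite{CV}, $J_G$ satisfies $(S_2)$ if and only if $\ini(J_G)=I_{\Delta_<}$ does, so it suffices to show that $S/I_{\Delta_<}$ is $(S_2)$, exploiting the explicit description of $\Delta_<$ obtained in Section \ref{sec: S2}. First I would invoke Terai's homological criterion \cite{Te}: $S/I_{\Delta_<}$ satisfies $(S_2)$ precisely when $\Delta_<$ is pure and, for every face $\sigma\in\Delta_<$ whose link has dimension at least one, the reduced homology $\tilde H_0(\lk(\sigma))$ vanishes, i.e.\ $\lk(\sigma)$ is connected. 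This reduces the whole problem to a purely combinatorial statement about the faces of $\Delta_<$ and the connectivity of their links.

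The purity of $\Delta_<$ corresponds to unmixedness of the initial ideal. Since unmixedness of $J_G$ is built into accessibility, I would derive purity of $\Delta_<$ from that, together with the facet description of Section \ref{sec: S2} (this transfer is itself a point requiring care, as unmixedness is not automatically preserved under Gröbner degeneration). The substantive part is the connectivity of the links. Here I would aim to read off, from the facet structure of $\Delta_<$, an identification of each link $\lk(\sigma)$ with the initial complex of a binomial edge ideal of a strictly smaller graph obtained from $G$ by an explicit combination of vertex deletions, whisker additions, and coning. Such an identification is exactly what opens the door to an induction on $|V(G)|$.

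The inductive engine would be the defining property of accessibility. Writing $\CC(G)$ for the collection of cut sets of $G$, accessibility says that $\emptyset\in\CC(G)$ and that every nonempty $S\in\CC(G)$ contains a vertex $v$ with $S\setminus\{v\}\in\CC(G)$. I would show that accessibility is inherited by the smaller graphs attached to the links $\lk(\sigma)$, so that by the inductive hypothesis their initial complexes are $(S_2)$, and in particular connected whenever they have positive dimension. Feeding this back through the identification of the links, I would conclude that every link of $\Delta_<$ of dimension at least one is connected; by Terai's criterion this yields $(S_2)$ for $S/I_{\Delta_<}$, and hence for $S/J_G$.

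The main obstacle is precisely the bridge of the second and third paragraphs. Accessibility is a condition on the minimal primes of $J_G$, i.e.\ on the cut sets of $G$, whereas the facets of $\Delta_<$ produced by the Gröbner degeneration need not all come from cut sets and may depend on the chosen labeling and term order. Controlling these extra facets, establishing purity through them, and proving that the links they create are connected---uniformly over all faces $\sigma$ and compatibly with the accessibility chain on $\CC(G)$---is the heart of the matter, and is exactly where a general combinatorial lemma is still missing. The evidence that the bridge exists is provided by the exhaustive verification for graphs with at most $12$ vertices (Theorem \ref{Theo:ByComputer}) and by the infinite family of chains of cycles for which the equivalence is proved directly (Theorem \ref{theo:chaincycle}); these would serve simultaneously as base cases for the induction and as a proving ground for the required connectivity lemma.
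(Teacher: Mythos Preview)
The statement you are trying to prove is listed in the paper as a \emph{Conjecture}, not a theorem; the paper offers no proof of it. The only evidence the paper provides is Theorem~\ref{theo:S2} for one implication, the computational verification for $n\le 12$ (Theorem~\ref{Theo:ByComputer}), and the class of chains of cycles (Theorem~\ref{theo:chaincycle}). So there is no ``paper's own proof'' to compare against, and your proposal should be read as a strategy rather than an argument to be checked for correctness.

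As a strategy, the outline is reasonable in its reductions (pass to $\ini(J_G)=I_{\Delta_<}$ via \cite{CV}, then use Terai's link--connectivity criterion), and you are right to flag the bridge in your last paragraph as the crux. Two remarks, however. First, your worry that ``the facets of $\Delta_<$ \dots\ need not all come from cut sets'' is misplaced: Corollary~\ref{cor: delta} shows that every facet has the form $F(T,\vv)$ with $T\in\mathcal{C}(G)$, and a direct count gives $|F(T,\vv)|=(n-|T|)+c(T)$, so unmixedness of $J_G$ (i.e.\ $c(T)=|T|+1$ for all $T$) immediately yields purity of $\Delta_<$. That step needs no extra care. Second, and more seriously, the central mechanism you propose---identifying $\lk(\sigma)$ with the initial complex of $J_H$ for some smaller graph $H$ built from $G$ by deletions, whiskers, and cones, and then showing accessibility is inherited by $H$---is entirely speculative. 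Nothing in Section~\ref{sec: S2} supports such an identification; the partial result Proposition~\ref{lemma: link conn} handles only a very restricted shape of face, and already Example~\ref{ex: star graphs} shows that disconnected links arise in ways tied to the labeling, not obviously to a smaller binomial edge ideal. Without that identification your induction has no engine, and ``accessibility is inherited'' is itself an open-ended statement (accessibility is not known to be preserved under the operations you list). In short, the gap you yourself name \emph{is} the conjecture; the proposal does not narrow it.
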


In \cite{BMS2}, the authors conjecture that accessible graphs are the only with Cohen-Macaulay binomial edge ideal. Our computation supports this conjecture. Finally, among the blocks that, after adding suitable whisker, satisfy Theorem \ref{Theo:ByComputer} we find two polyhedral graphs, hence Question \ref{question} naturally arises.

\section{Preliminaries}
In this section we recall some concepts and notation on graphs, simplicial complexes and binomial edge ideals  that we will use in the article (see also \cite{HHHKR},\cite{RR},\cite{BMS}, \cite{Te}).

Throughout this work, all graphs will be finite and simple, namely undirected graphs with no loops nor multiple edges. Given a graph $G$, we denote by $V(G)$ and $E(G)$ its vertex and edge set, respectively.
Let $G$ be a graph with vertex set $[n] = \{1, \dots, n\}$. If $e = \{u,v\} \in E(G)$, with $u,v \in V(G)$, we say that $u$ and $v$ are \textit{adjacent} and the edge $e$ is \textit{incident} with $u$ and $v$. We denote by $N_G(v)$ (or simply $N(v)$ if $G$ is clear from the context) the set of vertices of $G$ adjacent to $v$. The \textit{degree} of $v \in V(G)$, denoted $\deg v$, is the number of edges incident with $v$. An edge $\{u,v\} \in E(G)$, where $\deg v = 1$, is called \textit{whisker} on $u$. Given $u,v \in V(G)$, a \textit{path} from $v$ to $u$ of length $n$ is a sequence of vertices $v=v_0, \dots, v_n=u \in V(G)$, such that for each $1 \leq i,j \leq n, \{v_{i-1},v_i\} \in E(G)$ and $v_i \neq v_j$ if $i\neq j$.
A subset $C$ of $V(G)$ is called a \textit{clique} of $G$ if for all $u, v \in C$, with $u \neq v$, one has $\{u,v\} \in E(G)$. A \textit{maximal clique} is a clique that cannot be extended by including one more adjacent vertex. A vertex $v$ is called \textit{free vertex} of $G$ if it belongs to only one maximal clique, otherwise it is called an \textit{inner vertex} of $G$.


If $T \subseteq V(G)$, we denote by $G \setminus T$ the induced subgraph of $G$ obtained by removing from $G$ the vertices of $T$ and all the edges incident in them. A set $T \subset V(G)$ is called \textit{cutset} of $G$ if $c_G(T \setminus \{v\}) < c_G(T)$ for each $v \in T$, where $c_G(T)$ (or simply $c(T)$, if the graph is clear from the context) denotes the number of connected components of $G \setminus T$. We denote by $\mathcal{C}(G)$ the set of all cutsets of $G$. When $T \in \mathcal{C}(G)$ consists of one vertex $v$, $v$ is called a \textit{cutpoint}. A connected induced subgraph of $G$ that has no cutpoint and is maximal with respect to this property is called a \textit{block}. 

A subgraph $H$ of $G$ \textit{spans} $G$ if $V(H) = V(G)$. In a connected graph $G$, a \textit{chord} of a tree $H$ that spans $G$ is an edge of $G$ not in $H$. The number of chords of any spanning tree of a connected graph $G$, denoted by $m(G)$, is called the \textit{cycle rank} of $G$ and it is given by $m(G) = |E(G)| - |V (G)| + 1$.

Let $S = \mathbb{K}[\{x_i, y_j\}_{1 \leq i,j \leq n}]$ be the polynomial ring in $2n$ variables with coefficients in a field $\mathbb{K}$.  
Define $f_{ij} = x_iy_j - x_jy_i \in S$.
The \textit{binomial edge ideal} of $G$, denoted by $J_G$, is the ideal generated by all the binomials $f_{ij}$, for $ i<j$ and $\{i,j\} \in E(G)$.

The cutsets of a graph $G$ are essential tools to describe the primary decomposition and several algebraic properties of $J_G$. Let $T \in \mathcal{C}(G)$ and let $G_1, \dots, G_{c(T)}$ denote the connected components of $G \setminus T$. Let
\[
P_T(G) = \left( \bigcup_{i \in T} \{x_i, y_i\}, J_{\tilde{G}_1}, \dots, J_{\tilde{G}_{c(T)}} \right) \subseteq S
\]
where $\tilde{G}_i$, for $i=1, \dots, c(T)$, denotes the complete graph on $V(G_i)$. It holds 
\begin{equation}\label{Eq:primarydec}
J_G = \bigcap_{T \in \mathcal{C}(G)} P_T(G). 
\end{equation}

A graph $G$ is {\em decomposable},   if there exist two subgraphs $G_1$ and $G_2$ of $G$,  and a decomposition
  $G=G_1\cup G_2$
  with $\{v\}=V(G_1)\cap V(G_2)$, where   $v$ is a free vertex of $G_1$ and $G_2$. If $G$ is  not decomposable, we call it {\em indecomposable}.

Let $H$ be a graph. The \textit{cone} $G$ of $v$ on $H$ is the graph
with $V(G)=V(H) \cup \{v\}$ and edges $E(G)= E(H) \cup \{\{v,w\} \ | \ w \in V (G)\}$.

A cutset $T$ of $G$ is said \textit{accessible} if there exists $t \in T$ such that $T \setminus \{t\} \in \mathcal{C}(G)$. $G$ is said \textit{accessible} if $J_G$ is unmixed and $\mathcal{C}(G)$ is an accessible set system, that is all non-empty cutsets of $G$ are accessible. 

To describe the reduced Gröbner basis of $J_G$, in \cite{HHHKR} the following concept has been introduced.  Let $i$ and $j$ be two vertices of $G$ with $i < j$. A path $i=i_0, i_1, \dots, i_r = j$ from $i$ to $j$ is called \textit{admissible} if
\begin{enumerate}[label=(\roman*)]
\item $i_k \neq i_\ell$ for $k \neq \ell$;
\item for each $k=1, \dots, r-1$ one has $i_k < i$ or $i_k > j$;
\item for any $\{j_1, \dots, j_s\} \ subset \{i_1, \dots, i_r\}$, the sequence $i, j_1, \dots, j_s, j$ is not a path.
\end{enumerate}

Given an admissible path $\pi : i=i_0, i_1, \dots, i_r = j$ from $i$ to $j$, where $i <j$, define the monomial 
\[
u_{\pi} = \left(\prod_{i_k > j} x_{i_k} \right) \left(\prod_{i_\ell < i\phantom{j}} y_{i_\ell} \right).
\]

\begin{theorem}\label{theo: groebner basis}
Let $G$ be a graph on $[n]$. Let $<$ be the lexicographic order on $S$ induced by $x_1 > x_2 > \cdots > x_n > y_1 > \cdots > y_n$. Then the set 
\[
\mathcal{G} = \bigcup_{i<j} \{u_\pi f_{ij} \ | \ \pi \text{ is an admissible path from } i \text{ to } j\}
\]
is the reduced Gröbner basis of $J_G$ with respect to $<$.
\end{theorem}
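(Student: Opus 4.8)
The plan is to apply Buchberger's criterion directly to $\mathcal{G}$, proceeding in four stages: (a) show $\mathcal{G} \subseteq J_G$; (b) compute the initial monomial of each generator; (c) verify that every S-polynomial reduces to zero modulo $\mathcal{G}$; and (d) check that the resulting Gröbner basis is reduced.

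For stage (a) the key is a telescoping identity along the path. For three vertices $a,b,c$ with $\{a,b\},\{b,c\}\in E(G)$, writing $f_{pq}=x_py_q-x_qy_p$ antisymmetrically, a direct computation gives $x_b f_{ac} = x_a f_{bc} + x_c f_{ab}$ together with the analogous relation $y_b f_{ac} = y_a f_{bc} + y_c f_{ab}$. Thus multiplying $f_{ij}$ by the $x$-variable of a large intermediate vertex, or the $y$-variable of a small one, contracts the path by one edge. Proceeding by induction on the length $r$ of the admissible path $\pi$, and using condition (i) to keep all vertices distinct, one expresses $u_\pi f_{ij}$ as a combination of the edge-binomials $f_{i_{k-1}i_k}$; hence $u_\pi f_{ij}\in J_G$ and $\mathcal{G}\subseteq J_G$. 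The choice of $x$- versus $y$-multiplier at each step is exactly what produces the monomial $u_\pi$ as defined, with $x_{i_k}$ for $i_k>j$ and $y_{i_\ell}$ for $i_\ell<i$.

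For stage (b), since $i<j$ and the order places all $x$'s before all $y$'s with $x_1>\cdots>x_n$, we have $\ini(f_{ij})=x_iy_j$. Conditions (ii) and (iii) guarantee that every intermediate vertex satisfies $i_k<i$ or $i_k>j$, so $u_\pi$ involves only variables distinct from $x_i,x_j,y_i,y_j$. Therefore $u_\pi$ is coprime to $x_iy_j$, giving $\ini(u_\pi f_{ij})=u_\pi x_iy_j$, a squarefree monomial; this in turn shows $\ini(J_G)$ is squarefree once the Gröbner property is confirmed.

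The heart of the argument is stage (c), and this is where I expect the main obstacle. For two generators $u_\pi f_{ij}$ and $u_\sigma f_{kl}$ one must analyze how the initial monomials $u_\pi x_iy_j$ and $u_\sigma x_ky_l$ overlap and then show that the S-polynomial has a standard expression in terms of $\mathcal{G}$. The combinatorial content is that whenever two admissible paths interact nontrivially — sharing an endpoint, nesting, or overlapping along intermediate vertices — the vertices and edges involved reassemble into further admissible paths whose associated elements of $\mathcal{G}$ witness the reduction. Condition (iii), the no-shortcut requirement, is precisely what ensures these assembled paths are again admissible and that no spurious initial term arises; organizing this case analysis and constructing the explicit reductions is the bulk of the work. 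Finally, for stage (d), I would check minimality by verifying that no initial monomial $u_\pi x_iy_j$ divides another (again a consequence of admissibility, chiefly condition (iii)) and that each tail is reduced modulo the remaining initial terms, concluding that $\mathcal{G}$ is the reduced Gröbner basis.
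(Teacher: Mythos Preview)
The paper does not prove this theorem at all: it is stated in the Preliminaries section as a known result from \cite{HHHKR} (Herzog, Hibi, Hreinsd\'ottir, Kahle, Rauh), and no argument is given. So there is no ``paper's own proof'' to compare against.

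That said, your outline is the right one and matches the strategy of the original source. The proof in \cite{HHHKR} proceeds exactly via Buchberger's criterion: membership of $u_\pi f_{ij}$ in $J_G$ is obtained from the syzygies among $2\times 2$ minors (your telescoping identities are instances of the Pl\"ucker relations), the initial term is identified as $u_\pi x_i y_j$, and the core is the S-pair analysis. You have correctly flagged stage (c) as the substantive part and correctly anticipated that condition (iii) is what keeps the reassembled paths admissible; what remains is the actual case breakdown, which in \cite{HHHKR} is organized according to how the two paths share endpoints and intermediate vertices. Your stages (a), (b), (d) are essentially complete as written; stage (c) is at present only a plan, not a proof.
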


A finitely generated graded module $M$ over a Noetherian graded $\mathbb{K}$-algebra
$R$ is said to satisfy the \textit{Serre's condition} $(S_r)$, or simply $M$ is an $(S_r)$ \textit{module} if, for all $\mathfrak{p} \in \mathrm{Spec}(R)$, the inequality 
\[
\mathrm{depth}\  M_{\mathfrak{p}} \geq \min (r, \dim M_{\mathfrak{p}})
\]
holds true. The Serre's conditions are strictly connected to the Cohen-Macaulayness of a module, in fact $M$ is Cohen–Macaulay if and only if it is an $(S_r)$ module for all $r \geq 1$.

A \textit{simplicial complex} $\Delta$ on the set of vertices $[n]$ is a collection of subsets of $[n]$ which is closed under taking subsets, that is, if $F \in \Delta$ and $F' \subseteq F$, then also $F' \in \Delta$. Every element $F \in \Delta$ is called a \textit{face} of $\Delta$; the \textit{size} of a face $F$ is defined to be $|F|$, that is, the number of elements of $F$, and its dimension is defined to be $|F| -1$. The dimension of $\Delta$, which is denoted by $\dim (\Delta)$, is defined to be $d-1$, where $d = \max\{|F| \ | \ F \in \Delta\}$. A \textit{facet} of $\Delta$ is a maximal face of $\Delta$ with respect to inclusion. Let $\mathcal{F}(\Delta)$ denote the set of facets of $\Delta$. It is clear that $\mathcal{F}(\Delta)$ determines $\Delta$. A set $N \subseteq [n]$ that does not belong to $\Delta$ is called \textit{nonface} of $\Delta$. We say that $\Delta$ is \textit{pure} if all facets of $\Delta$ have the same size. The link of $\Delta$ with respect to a face $F \in \Delta$, denoted by $\text{lk}_{\Delta}(F)$, is the simplicial complex 
\[
\text{lk}_{\Delta}(F) = \{ G \subseteq [n] \setminus F \ | \ G \cup F \in \Delta\}.
\]
A simplicial complex $\Delta$ is called \textit{connected} if, for every $F,G \in \mathcal{F}(\Delta)$, there exists a sequence of facets $F=F_0, \dots, F_m=G$ such that, for every $0 \leq i,j \leq m-1$, we have $F_i \cap F_{i+1} \neq \emptyset$ and $F_i \neq F_j$, where $i \neq j$. We say that the sequence $F=F_0, \dots, F_m=G$ connects $F$ and $G$.

Let $R = \mathbb{K}[z_1,\dots, z_k]$ be the polynomial ring in $k$ variables over a field $\mathbb{K}$, and let $\Delta$ be a simplicial complex on $[k]$. For every subset $F\subseteq[k]$, we set $z_F = \prod_{i \in F} z_i$. The \textit{Stanley–Reisner ideal of $\Delta$} over $\mathbb{K}$ is the ideal $I$ of $R$ which is generated by those squarefree monomials $z_F$ with $F \not \in \Delta$. In other words, $I_{\Delta}= (z_F \ | \ F \in \mathcal{N}(\Delta))$, where $\mathcal{N}(\Delta)$ denotes the set of minimal nonfaces of $\Delta$ with respect to inclusion. The \textit{Stanley–Reisner ring of $\Delta$} over $\mathbb{K}$, denoted by $\mathbb{K}[\Delta]$, is defined to be $\mathbb{K}[\Delta] = R/I_{\Delta}$.

A simplicial complex $\Delta$ is said to satisfy \textit{Serre’s condition} $(S_r)$\textit{ over} $\mathbb{K}$, or simply $\Delta$ is an $(S_r)$ simplicial complex over $\mathbb{K}$, if the Stanley–Reisner ring $\mathbb{K}[\Delta]$ of $\Delta$ satisfies
Serre’s condition $(S_r)$. An immediate consequence of \cite[Theorem 1.4]{Te} is the following result that provides a useful combinatorial tool to check if $\Delta$ is $(S_2)$. 
\begin{proposition}\label{prop: link S2}
Let $\mathbb{K}$ be a field and $\Delta$ a simplicial complex. Then $\Delta$ is $(S_2)$ over $\mathbb{K}$ if and only if, for every face $F \in \Delta$ with $\dim(\mathrm{lk}_{\Delta}(F)) \geq 1$, the simplicial complex $\mathrm{lk}_{\Delta}(F)$ is connected. In particular, the $(S_2)$ property of a simplicial complex is independent from the base field. \\
\end{proposition}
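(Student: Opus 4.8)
The plan is to specialize to $r=2$ the homological description of Serre's condition furnished by \cite[Theorem 1.4]{Te}, which asserts that $\mathbb{K}[\Delta]$ satisfies $(S_r)$ if and only if
\[
\widetilde{H}_i(\mathrm{lk}_{\Delta}(F); \mathbb{K}) = 0 \qquad \text{for all } F \in \Delta \text{ and all } i < \min\{r-1,\ \dim \mathrm{lk}_{\Delta}(F)\}.
\]
As a sanity check, letting $r$ grow recovers Reisner's criterion, the bound $i < \min\{r-1, \dim\}$ becoming $i < \dim \mathrm{lk}_{\Delta}(F)$. First I would set $r = 2$, so that the relevant range becomes $i < \min\{1,\ d_F\}$, where I abbreviate $d_F := \dim \mathrm{lk}_{\Delta}(F)$, and then run through the possible values of $d_F$.

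Next I would carry out this elementary case analysis. If $F$ is a facet, then $\mathrm{lk}_{\Delta}(F) = \{\emptyset\}$ and $d_F = -1$, so $\min\{1, d_F\} = -1$ and the condition ranges over $i < -1$, which is vacuous. If $d_F = 0$, then $\min\{1, d_F\} = 0$ and the condition ranges over $i < 0$; since $\mathrm{lk}_{\Delta}(F)$ is then a non-void complex of dimension $0$, its reduced homology $\widetilde{H}_{-1}$ already vanishes, so again there is no constraint. Only when $d_F \geq 1$ is the minimum equal to $1$, and there the sole surviving requirement (beyond the automatic vanishing of $\widetilde{H}_{-1}$) is $\widetilde{H}_0(\mathrm{lk}_{\Delta}(F); \mathbb{K}) = 0$. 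Hence $\Delta$ is $(S_2)$ if and only if $\widetilde{H}_0(\mathrm{lk}_{\Delta}(F); \mathbb{K}) = 0$ for every face $F$ with $\dim \mathrm{lk}_{\Delta}(F) \geq 1$.

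Finally I would translate the vanishing of reduced $0$-th homology into connectedness. For a non-void simplicial complex $\Gamma$, the group $\widetilde{H}_0(\Gamma; \mathbb{K})$ is free of rank one less than the number of connected components of $\Gamma$; in particular $\widetilde{H}_0(\Gamma; \mathbb{K}) = 0$ precisely when $\Gamma$ is connected, and one checks that the combinatorial notion of connectedness recalled above (chains of facets with consecutive non-empty intersections) coincides with this topological one. Applying this with $\Gamma = \mathrm{lk}_{\Delta}(F)$ gives the stated equivalence; moreover, since the rank of $\widetilde{H}_0$ counts components irrespective of $\mathbb{K}$, the $(S_2)$ property does not depend on the base field, which is the final assertion. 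The only point demanding care is the bookkeeping of the boundary cases $d_F \in \{-1, 0\}$, where one must verify that \cite[Theorem 1.4]{Te} imposes no condition; everything else is a direct reading of the theorem.
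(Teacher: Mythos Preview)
Your proposal is correct and follows exactly the route the paper itself indicates: the paper does not give a proof of this proposition but simply states it as ``an immediate consequence of \cite[Theorem 1.4]{Te}'', and your argument is precisely the specialization of that theorem to $r=2$ together with the standard identification of $\widetilde{H}_0$-vanishing with connectedness. Your case analysis on $d_F$ and the remark on field-independence are the details one would fill in, and they are handled accurately.
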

\section{Simplicial complex of binomial edge ideals and ($S_2$)-condition}\label{sec: S2}
The aim of this section is to prove that if $S/J_G$ satisfies the Serre's condition $(S_2)$, then $G$ is an accessible graph. 

Let $<$ be a monomial order on $S$ and $\ini(I)$ denote the initial ideal of an ideal $I$ with respect to $<$. A consequence of \cite[Theorem 1.3]{CV} is that, if $I$ is an ideal and $\ini(I)$ is a square-free monomial ideal, then, for any $r \in \mathbb{N}$, $S/I$ satisfies Serre’s condition $(S_r)$ if and only if $S/ \ini(I)$ does. Since $\ini(J_G)$ is square-free (see \cite[Section 3.2]{CV}), it follows that to study the $(S_2)$ condition for $S/J_G$ it is sufficient to study it for $S/\ini(J_G)$. 

%


From now on, we fix the lexicographic order on $S$ induced by $x_1 > x_2 > \cdots > x_n > y_1 > \cdots > y_n$. 

Let $T \in \mathcal{C}(G)$ and let $G_1, \dotsm, G_{c(T)}$ be the connected components induced by $T$. By Theorem \ref{theo: groebner basis}, it follows immediately
\[
\ini(J_G) = \left(x_iy_ju_\pi \ |  \ \pi \text{ is an admissible path from } i \text{ to } j, \text{ with } i < j\right),
\]
and
\[
\ini (P_T(G)) = \left(\bigcup_{t \in T} \{x_t, y_t\}\right) + \sum_{k=1}^{c(T)} \left(x_iy_j \ | \ i, j \in V(G_k) \text{ and } i <j \right).
\]

Moreover, thanks to \cite{CDG}, it holds
\begin{equation}\label{eq: in J_G}
\ini(J_G)=\bigcap_{T\in \mathcal{C}(G)} \ini(P_T(G)).
\end{equation}

Define
\[
P_T(\vv)=\left(\bigcup_{t\in T} \{x_t,y_t\}\right)+\sum_{k=1}^{c(T)} \left( \{x_i \mid i\in V(G_k), i<v_k\}\cup \{y_j \mid j\in V(G_k), j>v_k\}\right)
\]
where $\vv=(v_1,\ldots,v_{c(T)})\in V(G_1)\times \cdots \times V(G_{c(T)})$.

\begin{lemma}\label{lemma: in P_T}
Let $G$ be a graph. Let $T \in \mathcal{C}(G)$ and let $G_1, \dotsm, G_{c(T)}$ be the connected components induced by $T$. Then 
\[
\ini (P_T(G)) = \bigcap_{\vv\in V(G_1)\times \cdots \times V(G_{c(T)})} P_T(\vv).
\]
\end{lemma}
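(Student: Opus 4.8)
The plan is to prove the equality of these two monomial ideals by double inclusion, testing membership of monomials: since every ideal appearing is a monomial ideal, a monomial lies in the ideal exactly when it is divisible by one of the generators. Write $L = (\bigcup_{t \in T}\{x_t,y_t\})$ for the common part, set $I_k = (x_iy_j \mid i,j \in V(G_k),\ i<j)$, and for each $v \in V(G_k)$ put
\[
Q_k(v) = \bigl(\{x_i \mid i\in V(G_k),\ i<v\}\cup\{y_j \mid j\in V(G_k),\ j>v\}\bigr),
\]
so that $\ini(P_T(G)) = L + \sum_{k} I_k$ and $P_T(\vv) = L + \sum_{k} Q_k(v_k)$. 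The key structural observation, used throughout, is that the generators of $L$ and of the various $I_k$, $Q_k(v)$ are supported on pairwise disjoint sets of variables (indexed by $T$ and by the vertex sets $V(G_k)$); hence a monomial lies in such a sum precisely when it is divisible by a generator of one single summand.

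For the inclusion $\ini(P_T(G)) \subseteq \bigcap_{\vv} P_T(\vv)$, I would fix $\vv$ and check that each generator of $\ini(P_T(G))$ lies in $P_T(\vv)$. The generators $x_t,y_t$ with $t\in T$ lie in $L$. For a generator $x_iy_j$ with $i,j\in V(G_k)$ and $i<j$, I distinguish according to $v_k$: if $i<v_k$ then $x_i\in Q_k(v_k)$, whereas if $j>v_k$ then $y_j\in Q_k(v_k)$; the only remaining possibility $i\ge v_k$ and $j\le v_k$ forces $j\le i$, contradicting $i<j$. Thus $x_iy_j\in Q_k(v_k)\subseteq P_T(\vv)$, and this direction is a routine case analysis.

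The reverse inclusion is the substantive one, and I would argue by contraposition: assuming a monomial $m\notin \ini(P_T(G))$, I construct a single tuple $\vv$ with $m\notin P_T(\vv)$. From $m\notin L$ one gets that $m$ involves no variable indexed by $T$; from $m\notin \sum_k I_k$ one gets, for each $k$, that there is no pair $i<j$ in $V(G_k)$ with $x_i\mid m$ and $y_j\mid m$. Setting $a_k=\max\{j\in V(G_k)\mid y_j\mid m\}$ and $b_k=\min\{i\in V(G_k)\mid x_i\mid m\}$ (with $a_k=-\infty$, resp.\ $b_k=+\infty$, when the corresponding set is empty), this condition says exactly $a_k\le b_k$. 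I then take $v_k=a_k$ when some $y_j\mid m$ occurs in the component, and $v_k=b_k$ (or any vertex of $V(G_k)$) otherwise; in every case $v_k$ is a genuine vertex of $V(G_k)$ satisfying $a_k\le v_k\le b_k$. For this choice no $x_i$ with $i<v_k$ and no $y_j$ with $j>v_k$ divides $m$, so $m\notin Q_k(v_k)$ for every $k$; combined with $m\notin L$ and the disjointness of supports, this yields $m\notin P_T(\vv)$, as required.

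The main obstacle is this reverse inclusion, and within it the crucial point is that the separator $v_k$ can be taken to be an \emph{actual} vertex of $V(G_k)$: this is exactly where the hypothesis $a_k\le b_k$ (equivalently, the absence of a dividing pair $x_iy_j$ in the component) is used, since $a_k$ and $b_k$ are themselves vertices attaining the respective bounds whenever the relevant set is nonempty. Once the witness $\vv$ is produced, the disjointness of the variable supports makes the final membership bookkeeping immediate, and the two inclusions together give the claimed equality.
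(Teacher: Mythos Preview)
Your proof is correct and follows essentially the same strategy as the paper: double inclusion on monomial ideals, with the forward inclusion handled by the same trichotomy on $v_k$, and the reverse inclusion by producing, for each component $G_k$, a separating vertex $v_k$ between the $y$-indices and the $x$-indices appearing in the monomial. The only cosmetic difference is that the paper phrases the reverse inclusion as a direct argument (reducing to the componentwise containment $J_k\supseteq\bigcap_{v_k}I_{v_k}$ and then deriving a contradiction from the choice $v_k=\min\{i:x_i\mid u\}$), whereas you argue by contraposition and build the full witness tuple $\vv$ explicitly via $a_k,b_k$; the underlying idea is identical.
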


\begin{proof}
$`` \subseteq"$ Let $u$ be a generator of $\ini (P_T(G))$. If $u \in \{x_t, y_t\}$ for $t \in T$, then $u \in P_T(\vv)$, for all $\vv\in V(G_1)\times \cdots \times V(G_{c(T)})$. Let $u = x_iy_j$, with $i<j$ and $i,j \in V(G_k)$, for some $k=1, \dots, c(T)$, and consider $v_k$, the $k$-th component of $\vv$. When $v_k \leq i$, then $y_j \in P_T(\vv)$, when $v_k >i$, then $x_i \in P_T(\vv)$. Hence, the monomial $x_iy_j \in P_T(\vv)$ for all $\vv\in V(G_1)\times \cdots \times V(G_{c(T)})$.

$`` \supseteq"$ Let $u$ be a generator of $\bigcap_{\vv\in V(G_1)\times \cdots \times V(G_{c(T)})} P_T(\vv)$. If $x_t$ divides $u$, for some $t\in T$, then $u \in \ini (P_T(G))$, as well. Assume that $x_t$ does not divide $u$, for any $t\in T$.
For $k=1, \dots, c(T)$, denote $J_k =\left(x_iy_j \ | \ i,j \in V(G_k) \text{ and } i<j\right)$ and $I_{v_k} = \left( \{x_i \mid i\in V(G_k), i<v_k\}\cup \{y_j \mid j\in V(G_k), j>v_k\}\right)$, for $v_k \in V(G_k)$. Then 
\[
\ini (P_T(G)) = \left(\bigcup_{t\in T} \{x_t,y_t\}\right)+\sum_{k=1}^{c(T)} J_k
\]
and 
\[
P_T(\vv) = \left(\bigcup_{t\in T} \{x_t,y_t\}\right)+\sum_{k=1}^{c(T)} I_{v_k}.
\]
Note that $I_{v_k}$ and $J_{k}$ are both ideals of $S_k = \mathbb{K}[x_i, y_i]_{i \in V(G_k)}$. Moreover, $I_{v_k}$ and $I_{v_h}$, with $v_k \in G_k$, $v_h \in G_h$ and $k \neq h$, are defined on a disjoint set of variable, and the same holds for the $J_k$'s. It is sufficient to prove that 
\[
J_k \supseteq \bigcap_{v_k \in V(G_k)} I_{v_k}.
\]

Assume that $u \in \bigcap_{v_k \in V(G_k)} I_{v_k}$. Note that $u$ can not be the product of only $x_i$'s (resp. $y_j$'s). Indeed, when $v_k = \min \{a \  | \ a \in V(G_k)\}$ (resp. $v_k = \max \{b \  | \ b \in V(G_k)\}$), then no $x_i$ belongs to $I_{v_k}$ (resp. no $y_j$ belongs to $I_{v_k}$). Now, suppose, by contradiction, that for any $x_iy_j$ that divides $u$, it holds $i >j$. Set $v_k = \min \{i \ | \ x_i \text{ divides } u \}$. Then all the $x_i$'s and $y_j$'s that divide $u$ do not belong to $I_{v_k}$, namely $u \not \in I_{v_k}$. It follows that if $x_iy_j$ divides $u$, then $i<j$ and  $u \in J_k$. 
\end{proof}

Let $T \in \mathcal{C}(G)$ and let $G_1, \dots, G_{c(T)}$ denote the connected components of $G \setminus T$. For $i=1, \dots, c(T)$, let $|V(G_i)| = m_i$ and $V(G_i) = \{v_1^i, \dots, v_{m_i}^i\}$. Given $\vv = \left(v_{j_1}^1, \dots, v_{j_{c(T)}}^{c(T)}\right) \in V(G_1) \times \cdots \times V(G_{c(T)})$, define 
\[
F(T, \vv) = \bigcup_{i= 1}^{c(T)} \left\lbrace\{ y_j \ | \ j \leq v_{j_i}^i\} \cup \{ x_j \ | \ j \geq v_{j_i}^i\}\right\rbrace.
\]

Since $\ini(J_G)$ is a squarefree monomial ideal, then there exists a unique simplicial complex $\Delta_{<}$ such that $\ini (J_G) = I_{\Delta_{<}}$. By Equation (\ref{eq: in J_G}) and Lemma \ref{lemma: in P_T}, we obtain the following description of $\Delta_{<}$.

\begin{corollary}\label{cor: delta}
Let $G$ be a graph. Then $\ini (J_G) = I_{\Delta_{<}}$, where 
\[
\mathcal{F}(\Delta_{<})=\bigcup_{T\in \mathcal{C}(G)} \{ F(T, \vv): \vv\in V(G_1)\times \cdots \times V(G_{c(T)}) \} .
\]
\end{corollary}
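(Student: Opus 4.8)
The plan is to recognize each ideal $P_T(\vv)$ as a Stanley--Reisner ideal and then convert the intersection supplied by Equation (\ref{eq: in J_G}) together with Lemma \ref{lemma: in P_T} into a union of simplicial complexes. The first and essentially bookkeeping step is to check that
\[
P_T(\vv)=I_{\langle F(T,\vv)\rangle},
\]
where $\langle F(T,\vv)\rangle$ denotes the full simplex on the vertex set $F(T,\vv)$. Indeed $P_T(\vv)$ is a monomial prime generated by a set of variables, hence it is the Stanley--Reisner ideal of the simplex on the complementary set of variables. Splitting $[n]$ into $T$ and the components $V(G_1),\dots,V(G_{c(T)})$, the variables that do \emph{not} generate $P_T(\vv)$ are exactly $x_t,y_t$ absent for $t\in T$ and, on each component $G_k$, the set $\{y_j : j\le v_k\}\cup\{x_j : j\ge v_k\}$; this is precisely $F(T,\vv)$.

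Next I would assemble the pieces. Combining Equation (\ref{eq: in J_G}) with Lemma \ref{lemma: in P_T} gives $\ini(J_G)=\bigcap_{T\in\mathcal{C}(G)}\bigcap_{\vv} P_T(\vv)=\bigcap_{T,\vv} I_{\langle F(T,\vv)\rangle}$, and then I would invoke the standard correspondence that intersecting Stanley--Reisner ideals amounts to taking the union of the associated complexes: a squarefree monomial lies in $\bigcap_i I_{\Delta_i}$ if and only if its support is a nonface of every $\Delta_i$, i.e.\ a nonface of $\bigcup_i \Delta_i$. This yields $\ini(J_G)=I_{\Delta_<}$ with $\Delta_<=\bigcup_{T,\vv}\langle F(T,\vv)\rangle$, so every face of $\Delta_<$ is contained in some $F(T,\vv)$ and the facets are the inclusion-maximal members of the family $\{F(T,\vv)\}$.

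It remains to prove that the $F(T,\vv)$ are \emph{already} facets, i.e.\ that they form an antichain, and this is the step I expect to be the main obstacle. I would track two invariants of a face $F=F(T,\vv)$: its support $S(F)=\{i : x_i\in F \text{ or } y_i\in F\}=[n]\setminus T$ and its diagonal $D(F)=\{i : x_i,y_i\in F\}=\{v_1,\dots,v_{c(T)}\}$, which has exactly $c(T)$ elements, one pivot per component. A containment $F(T,\vv)\subseteq F(T',\vv')$ then forces $T'\subseteq T$ (from supports) and $c(T)\le c(T')$ (from diagonals). The crux is a strict monotonicity of the component count along cutsets: if $T$ is a cutset and $T'\subseteq T$, then no component of $G\setminus T'$ can be contained in $T$, for otherwise a vertex $t$ of such a component would be isolated in $G\setminus(T\setminus\{t\})$, giving $c(T\setminus\{t\})>c(T)$ and contradicting the cutset condition; this produces a surjection from the components of $G\setminus T$ onto those of $G\setminus T'$, whence $c(T)\ge c(T')$, so in fact $c(T)=c(T')$ and the surjection is a bijection. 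Finally, if some $t\in T\setminus T'$ existed, all of its neighbours surviving in $G\setminus T$ would lie in the single component of $G\setminus T$ sitting inside the component of $G\setminus T'$ through $t$, forcing $c(T\setminus\{t\})\ge c(T)$ and again violating the cutset condition. Hence $T=T'$, and then $D(F)$ together with the explicit shape of $F(T,\vv)$ recovers $\vv=\vv'$. Thus $\{F(T,\vv)\}$ is an antichain and coincides with $\mathcal{F}(\Delta_<)$, as claimed.
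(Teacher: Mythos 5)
Your proposal is correct and follows the same route the paper intends: the paper states this corollary without proof as an immediate consequence of Equation~(\ref{eq: in J_G}) and Lemma~\ref{lemma: in P_T}, which is exactly the decomposition you assemble via the identification $P_T(\vv)=I_{\langle F(T,\vv)\rangle}$ and the intersection--union correspondence for Stanley--Reisner ideals. Your additional antichain argument (using the support $[n]\setminus T$ and the diagonal of size $c(T)$, plus the cutset condition to force $T=T'$) is sound and supplies the maximality of the sets $F(T,\vv)$, a point the paper leaves implicit even though it is genuinely used later, e.g.\ in the proof of Theorem~\ref{theo:S2}.
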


For a graded $S$-module $M$ we denote by $H(t) = \sum_{i=0}^d (h_i(M)) t^i/(1-t)^d$ the Hilbert series of $M$ and by $h=(h_0,\ldots,h_d)$ its $h$-vector. The following result, by a well known formula that relates $f$-vector with $h$-vector, gives a way to compute the invariant by $\Delta_{<}$ as defined above.

\begin{corollary}\label{cor:h vect}
The $h$-vector of $\Delta_{<}$ is
\[
 h_k=\sum_{i=0}^{k}(-1)^{k-i}\binom{d-i}{k-i} f_{i-1}(\Delta_{<}).
\]
for $k=0,\ldots,d$.
\end{corollary}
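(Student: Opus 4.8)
The plan is to derive the stated identity from the standard form of the Hilbert series of a Stanley–Reisner ring, since by Corollary \ref{cor: delta} we have $S/\ini(J_G) = \mathbb{K}[\Delta_<]$. Recall that $\Delta_<$ has dimension $d-1$, so that the Krull dimension of $\mathbb{K}[\Delta_<]$ equals $d$, which is precisely the exponent appearing in the denominator of the Hilbert series $H(t) = \sum_{i=0}^{d} h_i t^i/(1-t)^d$ as fixed in the statement. First I would recall the classical expression of the Hilbert series of $\mathbb{K}[\Delta_<]$ in terms of the face numbers $f_{i-1}(\Delta_<)$ (the number of faces of size $i$, with $f_{-1}(\Delta_<) = 1$ counting the empty face), namely
\[
H(t) = \sum_{i=0}^{d} f_{i-1}(\Delta_<)\, \frac{t^{i}}{(1-t)^{i}}.
\]
This follows from the $\mathbb{K}$-vector space decomposition of $\mathbb{K}[\Delta_<]$ as the direct sum, over all faces $F \in \Delta_<$, of the span of the monomials with support exactly $F$; each such summand contributes $t^{|F|}/(1-t)^{|F|}$, and grouping the faces by their cardinality yields the displayed formula.

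Next I would equate this expression with the defining form $H(t) = \sum_{k=0}^{d} h_k t^k/(1-t)^d$ and clear the common denominator by multiplying both sides by $(1-t)^d$. This produces the polynomial identity
\[
\sum_{k=0}^{d} h_k t^k = \sum_{i=0}^{d} f_{i-1}(\Delta_<)\, t^{i} (1-t)^{d-i}.
\]
The final step is a coefficient comparison: expanding $(1-t)^{d-i} = \sum_{j \geq 0} (-1)^{j} \binom{d-i}{j} t^{j}$, setting $k = i+j$, and reversing the order of summation, the coefficient of $t^k$ on the right-hand side is exactly $\sum_{i=0}^{k} (-1)^{k-i} \binom{d-i}{k-i} f_{i-1}(\Delta_<)$. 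Matching this with the coefficient $h_k$ on the left-hand side gives the asserted formula for each $k = 0, \ldots, d$.

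There is no genuine obstacle here, as this is the standard inversion relating the $f$-vector and the $h$-vector of an arbitrary simplicial complex; the only points requiring care are the bookkeeping conventions, in particular that the $d$ in the denominator of the Hilbert series coincides with $\dim \Delta_< + 1$ and that the index shift in $f_{i-1}$ correctly accounts for faces of size $i$ (including the empty face at $i=0$). Once the Hilbert series is written in the grouped-by-cardinality form above, the rest is the elementary binomial manipulation just described, and one may equally cite it as a well-known fact about Stanley–Reisner rings.
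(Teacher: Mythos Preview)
Your proposal is correct and matches the paper's approach: the paper does not give a proof of this corollary at all, merely remarking that it follows ``by a well known formula that relates $f$-vector with $h$-vector.'' Your derivation is exactly the standard argument for that well-known formula, so you have simply filled in what the paper left implicit.
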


In \cite{BN}, authors provide a formula to compute the multiplicity of $S/J_G$. By knowing $\Delta_{<}$ such that $\ini(J_G) = I_{\Delta_{<}}$ and by Corollary \ref{cor:h vect}, one can easily obtain another simple way to get the multiplicity. \\

%

In the following, we deeply use the simplicial complex $\Delta_{<}$ defined in Corollary \ref{cor: delta} to prove that if $S/J_G$ satisfies the Serre's condition $(S_2)$, then the graph $G$ is accessible. Nevertheless, we observe that the simplicial complex is strongly related to the chosen monomial order also for very simple graphs, as the following Example shows.
\begin{example}
Let $G=P_2$ be the path on 3 vertices with $E(G) = \{\{1,2\}, \{2, 3\}\}$ and fix the lexicographic order on $S$ induced by $x_1 > x_2 > x_3 > y_1 > y_2 > y_3$. Then, $\mathcal{C}(G) =\{ \emptyset, \{2\}\}$ and $I_{\Delta_{<}} = (x_1y_2, x_2y_3)$, where
\begin{align*}
\Delta_{<} &= \{F(\emptyset, (1)), F(\emptyset, (2)), F(\emptyset, (3)), F(\{2\}, (1,3))\} \\
&= \{ \{x_1,y_1,x_2,x_3\}, \{y_1,x_2,y_2,x_3\}, \{y_1,y_2,x_3,y_3\}, \{x_1,y_1,x_3,y_3\}\}.
\end{align*}
One can immediately observe that all the facets in $\Delta_{<}$ contain the variables $y_1$ and $x_3$. 
Consider now the same graph but with a different vertex labelling with $E(G) = \{\{1,3\}, \{2, 3\}\}$. Fix the same term order for $S$. Then, $\mathcal{C}(G) =\{ \emptyset, \{3\}\}$ and  $I_{\Delta_{<}} = (x_1y_3, x_2y_3, x_1y_2x_3)$, where
\begin{align*}
\Delta_{<} &= \{F(\emptyset, (1)), F(\emptyset, (2)), F(\emptyset, (3)), F(\{3\}, (1,2))\} \\
&= \{ \{x_1,y_1,x_2,x_3\}, \{y_1,x_2,y_2,x_3\}, \{y_1,y_2,x_3,y_3\}, \{x_1,y_1,x_2,y_2\}\}.
\end{align*}
In this case, only the variable $y_1$ is contained in all the facets of $\Delta_{<}$. This implies that the two simplicial complexes are not isomorphic.
\end{example}

\begin{remark}\label{remark: union cutset}
Let $G$ be a graph on $[n]$. Let $T \in \mathcal{C}(G)$ and $v \in T$ be a cutpoint of $G$ that induces two connected components, $H_1$ and $H_2$. For $i=1,2$, let $T_i \subseteq T \cap V(H_i)$. If $T_1$ and $T_2$ are cutsets of $G$, then $T_1 \cup T_2$ is a cutset of $G$. 
\end{remark}

\begin{lemma}\label{lemma: T accessible}
Let $G$ be a graph on $[n]$. Let $T \in \mathcal{C}(G)$ and $v \in T$ be a cutpoint of $G$ that induces two connected components, $H_1$ and $H_2$. For $i=1,2$, let $T_i = T \cap V(H_i)$. If $S_1 = T_1 \cup \{v\}$ and $S_2=T_2 \cup \{v\}$ are accessible cutsets of $G$, then $T$ is an accessible cutset of $G$. 
\end{lemma}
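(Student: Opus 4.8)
The plan is to combine the combinatorial description of cutsets with a short case analysis driven by the accessibility witnesses of $S_1$ and $S_2$. First I would record two structural facts. Since $v$ is a cutpoint whose deletion produces exactly the components $H_1$ and $H_2$, no edge of $G$ joins $V(H_1)$ to $V(H_2)$, and $v$ has a neighbour in each of $H_1$ and $H_2$. Hence for any $A_1 \subseteq V(H_1)$ and $A_2 \subseteq V(H_2)$, the connected components of $G \setminus (A_1 \cup \{v\} \cup A_2)$ are exactly the components of $H_1 \setminus A_1$ together with those of $H_2 \setminus A_2$, while every vertex of $A_1$ keeps all its surviving neighbours on the $H_1$-side and every vertex of $A_2$ on the $H_2$-side. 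Second, I would use that $W \in \mathcal{C}(G)$ if and only if each $w \in W$ is adjacent to at least two connected components of $G \setminus W$ (this is the content of $c_G(W \setminus \{w\}) < c_G(W)$); this is the criterion I would verify throughout. Note that $T = T_1 \cup \{v\} \cup T_2$ is a disjoint union with $T_i \subseteq V(H_i)$.

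Since $S_1$ and $S_2$ are accessible cutsets, fix $s_1 \in S_1$ and $s_2 \in S_2$ with $S_1 \setminus \{s_1\}, S_2 \setminus \{s_2\} \in \mathcal{C}(G)$. The goal is to produce a single element of $T$ whose removal leaves a cutset, so I would distinguish whether these witnesses equal $v$ or not.

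In the main case $s_1 \in T_1$ (the case $s_2 \in T_2$ being symmetric), I would show $T \setminus \{s_1\} = (T_1 \setminus \{s_1\}) \cup \{v\} \cup T_2 \in \mathcal{C}(G)$ by checking the two-component criterion for each of its vertices. For $t \in T_1 \setminus \{s_1\}$ the surviving neighbours lie on the $H_1$-side, which is identical to the $H_1$-side of $G \setminus (S_1 \setminus \{s_1\})$; as $S_1 \setminus \{s_1\} \in \mathcal{C}(G)$, such a $t$ meets at least two components there. For $t \in T_2$ the $H_2$-side is unaffected and agrees with that of $G \setminus S_2$, so $t$ meets at least two components by $S_2 \in \mathcal{C}(G)$. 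The delicate vertex is $v$: from $S_1 \setminus \{s_1\} \in \mathcal{C}(G)$, where $v$ meets at least two components and one of them is the whole $H_2$, I would extract that $v$ meets at least one component of $H_1 \setminus (T_1 \setminus \{s_1\})$; symmetrically, from $S_2 \in \mathcal{C}(G)$ (there one component is the whole $H_1$) I would get that $v$ meets at least one component of $H_2 \setminus T_2$. These two components are distinct and both survive in $G \setminus (T \setminus \{s_1\})$, so $v$ meets at least two components there as well.

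In the remaining case $s_1 = s_2 = v$, one has $S_1 \setminus \{v\} = T_1$ and $S_2 \setminus \{v\} = T_2$, both cutsets, so Remark \ref{remark: union cutset} yields $T \setminus \{v\} = T_1 \cup T_2 \in \mathcal{C}(G)$, and $T$ is again accessible. I expect the main obstacle to lie in the bookkeeping for $v$ in the main case: one has to match the components of the auxiliary cutsets $S_1 \setminus \{s_1\}$ and $S_2$ — in which $H_2$, respectively $H_1$, persists as a single component — against those of $G \setminus (T \setminus \{s_1\})$, in which both sides are cut, and argue that the $H_1$-witness and the $H_2$-witness for $v$ remain present and distinct after the extra deletions.
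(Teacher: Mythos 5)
Your proof is correct and follows essentially the same route as the paper: the same case split on whether the accessibility witnesses of $S_1$ and $S_2$ equal $v$, with the same choice of vertex to delete from $T$ in each case. The only difference is that in the main case the paper simply invokes Remark \ref{remark: union cutset} to glue $S_1\setminus\{s_1\}$ and $S_2$ along $v$, whereas you verify the cutset criterion for $T\setminus\{s_1\}$ by hand --- a harmless (and in fact slightly more careful) expansion of the same gluing argument.
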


\begin{proof}
By hypothesis, $S_1$ and $S_2$ are accessible, that is there exist $v_1 \in S_1$ and $v_2 \in S_2$ such that $S_1 \setminus \{v_1\}, S_2 \setminus \{v_2\} \in \mathcal{C}(G)$. If $v_1 = v_2 = v$, then, by Remark \ref{remark: union cutset}, $T_1 \cup T_2 = T \setminus \{v\}$ is a cutset of $G$, namely $T$ is accessible. If at least one between $v_1$ and $v_2$ is not $v$, assume $v_1 \neq v$, then, by Remark \ref{remark: union cutset}, $S_1 \setminus \{v_1\} \cup T_2 = T \setminus \{v_1\}$ is a cutset of $G$, namely $T$ is accessible.
\end{proof}

\begin{remark}\label{remark: T accessible cutpoint}
Let $G$ be a graph and $T \in \CC(G)$. If all the cutset $T'$, with $T' \subset T$, are accessible, then $T$ contains a cutpoint. The proof of this fact is the same of \cite[Lemma 4.1]{BMS2}. 
\end{remark}

\begin{theorem}\label{theo:S2}
Let $G$ be a graph such that $S/J_G$ satisfies the Serre’s condition $(S_2)$. Then $G$ is an accessible graph.
\end{theorem}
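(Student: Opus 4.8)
The plan is to prove the contrapositive in two logical steps. First, I need to unpack what accessibility requires: by definition, $G$ is accessible when $J_G$ is unmixed \emph{and} every nonempty cutset is accessible. So I would establish separately that the $(S_2)$-condition forces unmixedness, and then that it forces every cutset to be accessible. Unmixedness should follow cheaply: $S/J_G$ being $(S_2)$ implies $S/\ini(J_G)$ is $(S_2)$ by \cite{CV}, which in turn forces $\Delta_<$ to be pure (an $(S_2)$ complex, being connected in codimension one and equidimensional in the relevant sense, has no facets of deficient dimension), and purity of $\Delta_<$ translates back into equality of dimensions of all the $P_T(G)$, i.e. unmixedness of $J_G$.

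The heart of the argument is the accessibility of cutsets, and here I would argue by induction on $|T|$, reducing to the key case via the lemmas already proved. Suppose for contradiction that $G$ has a nonaccessible cutset; take $T$ minimal with this property, so that every $T' \subsetneq T$ in $\CC(G)$ is accessible. By Remark \ref{remark: T accessible cutpoint}, $T$ then contains a cutpoint $v$, which splits $G \setminus (T\setminus\{v\})$ (or an appropriate induced graph) into components that I would group into two halves $H_1, H_2$ meeting only at $v$. Writing $T_i = T \cap V(H_i)$ and $S_i = T_i \cup \{v\}$, Lemma \ref{lemma: T accessible} tells me that if both $S_1$ and $S_2$ were accessible then $T$ would be accessible too --- contradiction. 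So at least one $S_i$, say $S_1$, must be a nonaccessible cutset, and by minimality of $T$ this can only happen if $S_1 = T$, meaning $T_2 = \emptyset$ and $v$ is essentially the only ``branching'' vertex. This localizes the failure of accessibility to a very concrete configuration around a single cutpoint $v$ inside $T$.

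The decisive step is then to derive a contradiction with the $(S_2)$-condition in this localized situation, and this is where the combinatorial description of $\Delta_<$ from Corollary \ref{cor: delta} does the real work. The idea is to choose a face $F \in \Delta_<$ whose link $\lk(F)$ has dimension at least $1$ but is \emph{disconnected}, which by Proposition \ref{prop: link S2} contradicts $(S_2)$. Concretely, I would build $F$ from the cutset $T$ and a suitable vector $\vv$ so that the facets of $\Delta_<$ containing $F$ correspond exactly to the choices of branch coordinate $v_k$ at the offending component adjacent to $v$; because $T$ is a nonaccessible cutset, removing any single vertex of $T$ fails to remain a cutset, and this failure should manifest as two ``clusters'' of facets through $F$ coming from genuinely different cutsets $T \in \CC(G)$ that cannot be linked by a chain of facets all containing $F$. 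Translating the graph-theoretic nonaccessibility into the nonexistence of a connecting sequence of facets in the link is the main obstacle: I expect the delicate point to be verifying that the two families of facets through $F$ have empty pairwise intersection within the link (equivalently, that no facet of $\Delta_<$ simultaneously witnesses both clusters), since the $F(T,\vv)$ are large and one must argue carefully about which variables $x_j, y_j$ can co-occur. I would handle this by tracking, for the cutpoint $v$ and the two sides $H_1, H_2$, the forced presence of certain $x_j$ on one side and $y_j$ on the other in any facet of the link, showing that a hypothetical connecting facet would exhibit a vertex of $T$ whose removal keeps $T$ a cutset, contradicting nonaccessibility.
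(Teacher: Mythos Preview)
Your high-level strategy matches the paper's --- contrapositive, dispose of unmixedness, take a minimal nonaccessible cutset $T$, and exhibit a face $F$ of $\Delta_<$ with disconnected link --- but the decisive construction is missing. The preliminary reduction you attempt via Remark~\ref{remark: T accessible cutpoint} and Lemma~\ref{lemma: T accessible} is a detour: concluding that $T\setminus\{v\}$ lies entirely on one side of a cutpoint $v$ does not simplify the link problem, and you never actually exploit it afterwards (you also do not verify that the $S_i$ are cutsets). In the paper these two lemmas are not used to localize $T$ in advance; they appear \emph{inside} the link argument, to handle one subcase of the chain analysis.

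The key idea you are missing is the freedom to choose the monomial order. The paper relabels $V(G)$ so that the elements $w_1,\dots,w_k$ of $T$ are the \emph{smallest} vertices, and only then takes the lex order. With this labelling one writes down an explicit face $F$ (all $y$-variables outside $T$ together with one top $x$-variable) that is contained in both $F(T,\vv)$ and $F(\emptyset,\mathbf{u})$ for specific $\vv,\mathbf{u}$; the link of $F$ then contains two $(k-1)$-simplices $A=\{x_{v^1_{m_1}},\dots,x_{v^k_{m_k}}\}$ and $B=\{y_{w_1},\dots,y_{w_k}\}$, and any chain of link-facets from $A$ to $B$ is forced, at the step just before $B$, to come from a facet $F(T'',\cdot)$ with $T''\subsetneq T$ a cutset. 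If $|T''|=|T|-1$ this contradicts nonaccessibility of $T$ directly; if $|T''|<|T|-1$, minimality of $T$ together with Remark~\ref{remark: T accessible cutpoint} and Lemma~\ref{lemma: T accessible} force $T$ itself to be accessible after all. Without the relabelling there is no reason for $F(T,\vv)$ and $F(\emptyset,\mathbf{u})$ to share a large common face, and your description of $F$ (``build $F$ from the cutset $T$ and a suitable vector $\vv$'') and of the ``two clusters'' of facets remains too vague to carry the argument; the tracking of $x_j$'s and $y_j$'s you anticipate as the obstacle is precisely what the specific ordering makes transparent.
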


\begin{proof}
To prove the statement, we suppose that $G$ is not accessible and we show that $S/J_G$ does not satisfy the Serre’s condition $(S_2)$. If $G$ is not accessible then $J_G$ is not unmixed or $\mathcal{C}(G)$ is not an accessible set system. If $J_G$ is not unmixed, then it is known that the $(S_2)$-condition is not satisfied. Hence, we can suppose that $J_G$ is unmixed but $\mathcal{C}(G)$ is not an accessible set system. Let $T \in \mathcal{C}(G)$ be the non-empty cutset with the minimum cardinality such that $T \setminus \{v\} \not \in \mathcal{C}(G)$, for every $v \in T$. Let $T = \{w_1, \dots, w_k\}$, with $k >1$, and $G_1, \dots, G_{k+1}$ be the connected components of $G \setminus T$. For $i=1, \dots, k+1$, let $|V(G_i)| = m_i$ and $V(G_i) = \{v_1^i, \dots, v_{m_i}^i\}$.

Fix the lexicographic order on $S$ induced by the total order 
\begin{equation}
w_1 < \cdots < w_k < v_1^1 < \dots <v_{m_1}^1 < \cdots < v_1^{k+1} < \dots  < v_{m_{k+1}}^{k+1} \label{eq: order} \tag{$\star$}
\end{equation}

Thanks to \cite[Theorem 1.3]{CV}, it is sufficient to prove that $S/\ini(J_G)$ does not satisfies the Serre’s condition $(S_2)$. 

Consider $\vv = (v_{m_1}^1 , \dots, v_{m_{k+1}}^{k+1}) \in V(G_1) \times \cdots \times V(G_{k+1})$ and
\[
F(T, \vv) = \bigcup_{i =1}^{k+1} \left\lbrace y_{v_1^i}, \dots, y_{v_{m_i}^i}, x_{v_{m_i}^i}\right\rbrace \in \mathcal{F}(\Delta_{<}).
\]
The set
\[
F= \bigcup_{i =1}^{k} \left\lbrace y_{v_1^i}, \dots, y_{v_{m_i}^i} \right\rbrace \cup \left\lbrace y_{v_1^{k+1}}, \dots, y_{v_{m_{k+1}}^{k+1}}, x_{v_{m_{k+1}}^{k+1}}\right\rbrace
\]
is a subset of $F(T, \vv)$, that is a face of $\Delta_{<}$. Consider the link of  $\Delta_{<}$ with respect to $F$. The sets $A = \{x_{v_{m_1}^1}, \dots, x_{v_{m_k}^k}\}$ and $B = \{y_{w_1}, \dots, y_{w_k}\}$ belong to $\text{lk}_{\Delta_{<}}(F)$. In fact, thanks to the order $(\star)$, $A \cap F = \emptyset$ and $A \cup F = F(T, \vv) \in \mathcal{F}(\Delta_{<})$, whereas, $B \cap F = \emptyset$ and $B \cup F = F(\emptyset, \mathbf{u}) \in \mathcal{F}(\Delta_{<})$, where $\mathbf{u} = (v_{m_{k+1}}^{k+1})$. Since $|A| = |B| = k >1$, it follows $\dim \mathrm{lk}_{\Delta_{<}}(F) \geq 1$. Assume, by contradiction, that $\text{lk}_{\Delta_{<}}(F)$ is connected, that is there exists a sequence of facets $A = F_0, F_1, \dots, F_{t+1} = B$ of $\text{lk}_{\Delta_{<}}(F)$ such that, for every $0 \leq i,j \leq t$, $F_i \cap F_{i+1} \neq \emptyset$ and $F_i \neq F_j$ when $i \neq j$. First of all, suppose that $F_{t} \cap B = \{y_{w_i}\}$, for some $i=1, \dots, k$. 
Without loss of generality, assume $i=1$. Then there exists $F(T',\mathbf{\overbar{v}}) \in \mathcal{F}(\Delta_{<})$ such that $F(T',\mathbf{\overbar{v}}) = F_t \cup F$. 
Note that $y_{w_1} \in F(T',\mathbf{\overbar{v}})$ but $y_{w_i} \not \in F(T',\mathbf{\overbar{v}})$, for $i \neq 1$, otherwise $F_t \cap B \supset \{y_{w_1}\}$. 
Since  $y_{w_i} \not \in F(T',\mathbf{\overbar{v}})$, for $i \neq 1$, and $y_{v} \in F(T',\mathbf{\overbar{v}})$, for $v \in (V(G) \setminus T) \cup \{w_1\}$, that is either $v = w_1$ or $v > w_k$, then $x_{w_i} \not \in F(T',\mathbf{\overbar{v}})$, for $i \neq 1$. 
From the fact that $x_{w_i}, y_{w_i} \not \in F(T',\mathbf{\overbar{v}})$, it follows that $T' = \{w_2, \dots, w_k\}$ and $\mathbf{\overbar{v}} = (v_{m_2}^2, \dots, v_{m_{k+1}}^{k+1})$. This implies that $T' = T \setminus \{w_1\} \in \mathcal{C}(G)$, but this is in contradiction with the hypothesis that $T$ is not an accessible cutset. 

Now, suppose that $|F_{t} \cap B| > 1$. Note that $|F_{t} \cap B| <k$, otherwise $F_{t} \cap B= B$, that is $F_t = F_{t+1} =B$, which contradicts the hypothesis on $F_i$. Without loss of generality, assume $F_{t} \cap B= \{y_{w_1}, \dots, y_{w_a}\}$, with $1<a<k$. There exists $F(T'',\mathbf{\overbar{v}'}) \in \mathcal{F}(\Delta_{<})$ such that $F(T'',\mathbf{\overbar{v}'}) = F_t \cup F$. For $i >a$, it holds $y_{w_i} \not \in F_t$, hence $y_{w_i} \not \in F(T'',\mathbf{\overbar{v}'})$. 
Since $y_{v} \in F(T',\mathbf{\overbar{v}})$, for every $v  > w_k$, then $x_{w_i} \not \in F(T',\mathbf{\overbar{v}})$, for $i=1, \dots, k$. Therefore, $x_{w_i}, y_{w_i} \not \in F(T'',\mathbf{\overbar{v}'})$ for $i >a$ and $T'' = \{w_{a+1}, \dots, w_k\}$. By hypothesis, $T$ is the smallest not accessible cutset, then any cutset which is a proper subset of $T$ is accessible. Since $T'' \subset T$, then $T''$ is accessible and, by Remark \ref{remark: T accessible cutpoint}, $T''$ contains a cutpoint, we say $w_{a+1}$. Then $w_{a+1}$ induces two connected components, $H_1$ and $H_2$. Let $T_i = T \cap V(H_i)$, for $i=1,2$. For $i=1,2$, $T_i \cup \{w_{a+1}\}$ is a cutset of $G$. By the minimality of $T$, both $T_1 \cup \{w_{a+1}\}$ and $T_2 \cup \{w_{a+1}\}$ are accessible cutsets of $G$. By Lemma \ref{lemma: T accessible}, also $T = T_1 \cup T_2 \cup \{w_{a+1}\}$ is an accessible cutset, which is a contradiction.

It follows that $\text{lk}_{\Delta_{<}}(F)$ is not connected, and then $S/\ini(J_G)$ does not satisfy the Serre's condition $(S_2)$.

\end{proof}

Let $G$ be a graph such that $J_G$ is unmixed. The following results state that to verify the Serre's condition $(S_2)$ for $S/J_G$ is not necessary to check the link of all the faces $F$ of $\Delta_{<}$.

\begin{proposition}
Let $G$ be a graph on $[n]$, with $n \leq 12$, such that $J_G$ is unmixed. For all monomial order $<$ and all $F \in \Delta_<$ such that $\dim F < \lfloor \frac{n+1}{2} \rfloor $ it holds that $\mathrm{lk}_{\Delta_<}(F)$ is connected. 
\end{proposition}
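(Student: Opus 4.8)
The plan is to work directly with the facet description of $\Delta_{<}$ furnished by Corollary~\ref{cor: delta}, and to prove connectivity of $\mathrm{lk}_{\Delta_{<}}(F)$ by showing that any two facets of $\Delta_{<}$ containing $F$ lie in the same connected component of the link. First I would record the two consequences of unmixedness that drive everything. Each facet has size $|F(T,\mathbf{v})| = (n-|T|)+c(T)$, so, $J_G$ being unmixed, all facets share a common size; comparing with $T=\emptyset$ gives $c(T)=|T|+1$ for every $T\in\mathcal{C}(G)$ and facet size $n+1$, whence $\dim\Delta_{<}=n$. The hypothesis $\dim F<\lfloor\tfrac{n+1}{2}\rfloor$ then reads $|F|\le\lfloor\tfrac{n+1}{2}\rfloor$, so every facet $\Phi\supseteq F$ satisfies $|\Phi\setminus F|\ge\lceil\tfrac{n+1}{2}\rceil$. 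This surplus — each link-facet occupies more than half of an $(n+1)$-element facet — is the quantitative engine of the argument.

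Writing $F=\{x_a:a\in A\}\cup\{y_b:b\in B\}$, a facet $F(T,\mathbf{v})$ contains $F$ exactly when $T\cap(A\cup B)=\emptyset$ and, in each component $G_i$ of $G\setminus T$, the split point obeys $\max(B\cap V(G_i))\le v^i\le\min(A\cap V(G_i))$. The first ingredient I would establish is that, for a \emph{fixed} admissible cutset $T$, all facets $F(T,\mathbf{v})\supseteq F$ are connected inside the link: moving a single coordinate $v^i$ to an adjacent admissible value yields a facet agreeing with the old one on all other components and on the bulk of $G_i$, so the two intersect in at least $n$ elements and therefore meet outside the small set $F$. The second ingredient is a hub: the facets $F(\emptyset,(v))$ all contain $y_1$ and $x_n$ and pairwise intersect in $n$ elements, forming a single connected piece; whenever $F$ has no inversion, i.e.\ $\max B\le\min A$, some $F(\emptyset,(v))$ contains $F$ and every facet with $T\ni\hskip-3pt\!\!/\,\{1,n\}$ can be spliced onto it through $y_1$ or $x_n$.

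The core is then to connect an arbitrary $\Phi=F(T,\mathbf{v})\supseteq F$ with $T\neq\emptyset$ across cutsets, which I would drive by the surplus. Suppose two facets $\Phi_1=F(T_1,\mathbf{v}_1)$ and $\Phi_2=F(T_2,\mathbf{v}_2)$ containing $F$ satisfied $\Phi_1\cap\Phi_2\subseteq F$. For each vertex $c$ the sets $\Phi_i\cap\{x_c,y_c\}$ can share a variable only if that variable lies in $F$, so at most $|F|$ vertices contribute a common variable; for all remaining $c$ the two facets either omit $c$ through a cutset or carry opposite single labels. This forces $\#\{c:\Phi_1,\Phi_2\text{ disagree at }c\}\ge n-|F|-|T_1|-|T_2|$, and such disagreement vertices are necessarily unoccupied by $F$. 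I would then use one of these free vertices, relabelled consistently with one side while preserving all of $F$, to construct an intermediate facet $\Phi_3\supseteq F$ meeting each of $\Phi_1,\Phi_2$ outside $F$; since $|F|\le\lfloor\tfrac{n+1}{2}\rfloor$ and $|T_i|=c(T_i)-1$, the surplus guarantees that enough free vertices survive to carry out this splicing and, iterating, to reduce $|T|$ until the hub is reached.

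The step I expect to be the main obstacle is precisely this reconciliation across distinct cutsets in the presence of inversions: a face with $\max B>\min A$ fits in no $\emptyset$-facet and genuinely requires separating cutsets, so one cannot simply peel $T$ down to $\emptyset$ (we only assume $J_G$ unmixed, not accessible, so $T\setminus\{t\}$ need not be a cutset). One must instead show that the cutset size forced by such an $F$ is small enough — controlled by $|F|\le\lfloor\tfrac{n+1}{2}\rfloor$ — that enough shared or relabellable vertices remain to build the connecting chain, and it is here that $n\le 12$ enters. Indeed, the construction of Theorem~\ref{theo:S2} shows any disconnecting face arises from a cutset $T$ of size $k$ with $n\ge 2k+1$ and has dimension $n-k\ge\lceil\tfrac{n+1}{2}\rceil$; the remaining task is to verify that no other mechanism produces a disconnecting face of dimension below $\lfloor\tfrac{n+1}{2}\rfloor$, and for $n\le 12$ the finitely many cutset patterns compatible with a face of size $\le\lfloor\tfrac{n+1}{2}\rfloor$ make this reconciliation tractable.
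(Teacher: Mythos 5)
There is a genuine gap, and you have in fact flagged it yourself: the entire argument hinges on connecting two facets $F(T_1,\mathbf{v}_1)\supseteq F$ and $F(T_2,\mathbf{v}_2)\supseteq F$ with distinct nonempty cutsets, and this step is never carried out. Your surplus count shows only that the two facets \emph{disagree} at many vertices; to splice them you must exhibit an intermediate facet $F(T_3,\mathbf{v}_3)\supseteq F$ meeting both outside $F$, and that requires producing an actual cutset $T_3\in\mathcal{C}(G)$ with the right properties. Unmixedness alone gives no control over which cutsets exist --- indeed the proof of Theorem \ref{theo:S2} shows that for unmixed non-accessible graphs some links genuinely disconnect, just at dimension $\geq \lfloor\frac{n+1}{2}\rfloor$, and Example \ref{ex: star graphs} shows the bound in the statement is attained. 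So any argument must explain why the threshold sits exactly at $\lfloor\frac{n+1}{2}\rfloor$, which your counting does not do. Your closing appeal to ``finitely many cutset patterns'' for $n\leq 12$ is not a proof: the finite check is asserted to be tractable but is never performed, and no enumeration of the patterns is given. The correct framing of what remains is accurate, but what remains is the whole theorem.

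For comparison, the paper does not attempt a theoretical argument at all: the proposition is established by an exhaustive computer verification (via the implementation in \cite{LMRR}) of the $(S_2)$ link condition over all unmixed graphs on at most $12$ vertices, which is also how the sharpness example is found. Your preliminary observations (all facets have size $n+1$ under unmixedness, so $\dim\Delta_<=n$ and every link-facet has at least $\lceil\frac{n+1}{2}\rceil$ elements outside $F$; the empty-cutset facets form a connected ``hub'' through $y_1$ and $x_n$) are correct and are close in spirit to the techniques used in Proposition \ref{lemma: link conn}, so they could be salvaged as lemmas; but as submitted the proposal is a plan with its central step missing, not a proof.
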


\begin{proof}
We have implemented a computer program, see \cite{LMRR}, that checks the Serre's condition $(S_2)$ for $S/J_G$. By means of it, we have verified that the statement holds. In particular, there exists a unique family of graphs such that $\mathrm{lk}_{\Delta_<}(F)$ is disconnected for $F \in \Delta_<$ with $\dim F = \lfloor \frac{n+1}{2} \rfloor$, that is the one in Example \ref{ex: star graphs}.
\end{proof}

\begin{example}\label{ex: star graphs}
Let $G$ be a graph on $[n]$ obtained by joining $s+1$ complete graphs $G_1, \dots, G_{s+1}$ such that $G_1 = \cdots = G_s = K_{s+1}$, if $n$ is odd $G_{s+1} = K_{s+1}$, otherwise $G_{s+1} = K_{s+2}$, and $G_i \cap G_j = H$, where $H = K_s$, for all $1 \leq i < j \leq s+1$. See Figure \ref{fig:NotAcc} for an example, with $n=7$. We observe that $\mathcal{C}(G) = \{\emptyset, T\}$, where $T = V(H)$. Moreover, $J_G$ is unmixed but $G$ is a block that is not a complete graph, hence $J_G$ is not Cohen-Macaulay by \cite{BN}. Fix the lexicographic order on $S$ induced by $x_1 > \cdots > x_n > y_1 > \cdots > y_n$. Let $V(H) = \{n-s+1, \dots, n\}$, and consider $F = \{y_1, \dots, y_{n-s}, x_{n-s}\} \in \Delta_<$. Note that $\dim F = n-s = \lfloor \frac{n+1}{2} \rfloor$. The facets of the link of $F$ in $\Delta_<$ are only two: $F(\emptyset, n-s)\setminus F$ and $F(T,(1, \dots, n-s)) \setminus F $, which are respectively $ \{x_{n-s+1}, \dots, x_n\}$ and $\{x_1, \dots, x_{n-s-1}\}$ and they are obviously disjoint. It follows that $\mathrm{lk}_{\Delta_<}(F)$ is disconnected.

\begin{figure}[H]
\centering
 \resizebox{!}{0.35\textwidth}{
\begin{tikzpicture}
\draw   (-4,5)-- (-4,3);
\draw   (-4,3)-- (-5,1);
\draw   (-5,1)-- (-3,1);
\draw   (-4,3)-- (-3,1);
\draw   (-4,5)-- (-5,1);
\draw   (-4,5)-- (-3,1);
\draw   (-4,-1)-- (-5,1);
\draw   (-4,-1)-- (-3,1);
\draw   (-4,-1)-- (-4,3);
\draw   (-1,2)-- (-3,1);
\draw   (-4,3)-- (-1,2);
\draw   (-5,1)-- (-1,2);
\draw   (-7,2)-- (-4,3);
\draw   (-7,2)-- (-5,1);
\draw   (-7,2)-- (-3,1);
\draw [fill=black] (-4,5) circle (2.5pt) node [anchor=south]{$1$};
\draw [fill=black] (-4,3) circle (2.5pt) node [anchor=south west]{$5$};
\draw [fill=black] (-5,1) circle (2.5pt) node [anchor=north east]{$7$};
\draw [fill=black] (-3,1) circle (2.5pt) node [anchor=north west]{$6$};
\draw [fill=black] (-4,-1) circle (2.5pt) node [anchor=north]{$3$};
\draw [fill=black] (-1,2) circle (2.5pt) node [anchor=west]{$2$};
\draw [fill=black] (-7,2) circle (2.5pt) node [anchor=east]{$4$};

\end{tikzpicture}}
\caption{}\label{fig:NotAcc}
\end{figure}
\end{example}



\begin{proposition}\label{lemma: link conn}
Let $G$ be a graph on $[n]$ such that $J_G$ is unmixed. Let $F = \{x_{i_1}, \dots, x_{i_t}, y_{j_1}, \dots, y_{j_s}\} \in \Delta_{<}$, with $1 \leq j_1 < \cdots < j_s < i_1 < \cdots < i_t \leq n$ and $\dim F \leq n -2$. Then $\mathrm{lk}_{\Delta_<}(F)$ is connected. 
\end{proposition}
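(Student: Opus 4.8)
The plan is to combine the explicit facet description of $\Delta_{<}$ from Corollary \ref{cor: delta} with a direct connectivity argument. Write $P=\{j_1,\dots,j_s\}$ and $Q=\{i_1,\dots,i_t\}$, so that $F=\{y_p : p\in P\}\cup\{x_q : q\in Q\}$ with $\max P<\min Q$, and put $R=[n]\setminus(P\cup Q)$. The hypothesis $\dim F\le n-2$ is precisely $|F|=s+t\le n-1$, i.e. $|R|\ge 1$. Since $J_G$ is unmixed, $\Delta_{<}$ is pure, so every facet of $\mathrm{lk}_{\Delta_{<}}(F)$ is of the form $F(T,\vv)\setminus F$ for a cutset $T$ and an admissible $\vv$ with $F\subseteq F(T,\vv)$, and these all share the common size $|F(T,\vv)|-|F|$. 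First I would record two consequences of $F\subseteq F(T,\vv)$: the inclusion $T\subseteq R$ (because $x_q,y_p\in F(T,\vv)$ forces $p,q\notin T$), and, in each component $G_k$ of $G\setminus T$, the constraint $\max(P\cap G_k)\le v_k\le \min(Q\cap G_k)$ on the split $v_k$. This interval of admissible splits is always nonempty, since $\max(P\cap G_k)\le\max P<\min Q\le\min(Q\cap G_k)$.

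The argument has two layers. For a \emph{fixed} cutset $T$, I would connect all the faces $F(T,\vv)\setminus F$ to one another. Moving a single split $v_k$ to the next admissible (hence next consecutive) vertex $v_k^+$ of $G_k$ changes $F(T,\vv)$ only by deleting $x_{v_k}$ and inserting $y_{v_k^+}$; the two facets therefore agree in $|F(T,\vv)|-1$ variables. After removing $F$ from both, at least $|F(T,\vv)|-1-|F|\ge (n+1)-1-(n-1)=1$ variables survive, so the two link-facets are adjacent. Changing one coordinate at a time shows the entire ``$T$-block'' $\{F(T,\vv)\setminus F\}$ is connected; this is exactly the step that uses $\dim F\le n-2$.

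The decisive second layer is to connect every $T$-block to the block of the empty cutset. Set $a^*=\max P$ and let $G_{k_0}$ be the component of $G\setminus T$ containing $a^*$ (it exists since $a^*\in P$ and $T\subseteq R$). Because $a^*=\max(P\cap G_{k_0})$ and $a^*<\min Q\le\min(Q\cap G_{k_0})$, the vertex $a^*$ is an admissible split for $G_{k_0}$; choosing $v_{k_0}=a^*$ and any admissible splits in the remaining components produces a facet $F(T,\vv)\supseteq F$ in which both $x_{a^*}$ and $y_{a^*}$ lie in $F(T,\vv)$. On the other hand $F(\emptyset,a^*)$ is a facet of the empty-cutset block containing $F$ (as $a^*\in[\max P,\min Q]$), and $x_{a^*}\in F(\emptyset,a^*)$. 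Since $a^*\in P$ we have $x_{a^*}\notin F$, so $x_{a^*}$ is a common variable of $F(T,\vv)\setminus F$ and $F(\emptyset,a^*)\setminus F$, witnessing that the $T$-block meets the empty-cutset block. (If $P=\emptyset$ one argues symmetrically with $b^*=\min Q$ and the variable $y_{b^*}$; if $F=\emptyset$ the claim is immediate, and if $G$ is disconnected the empty cutset simply has several components but still plays the role of the hub.) Combining the layers — the empty-cutset block is connected by the first layer, every $T$-block is connected by the first layer, and every $T$-block shares a variable with the empty-cutset block by the second — yields the connectivity of $\mathrm{lk}_{\Delta_{<}}(F)$.

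I expect the main obstacle to be this second layer. One can check on small examples (even for paths) that a \emph{typical} facet of a $T$-block shares no variable with any empty-cutset facet, so the point is not that the blocks overlap freely but that one must exhibit the \emph{right} facet in each block. The monotonicity hypothesis $\max P<\min Q$ is exactly what makes splitting the component of $\max P$ at $\max P$ admissible while still keeping $F\subseteq F(T,\vv)$, thereby producing the shared variable $x_{\max P}$; without this hypothesis the analogous choice could violate $F\subseteq F(T,\vv)$ and the link can indeed disconnect, as in the proof of Theorem \ref{theo:S2}. I would therefore isolate the construction of $\vv$ in the second layer as the crux and verify carefully that it is admissible and that $x_{a^*}\notin F$.
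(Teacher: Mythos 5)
Your proof is correct, and it takes a genuinely different route from the paper's, although both arguments use the empty-cutset facets $F(\emptyset,v)\setminus F$ as the connecting hub. The paper picks two arbitrary facets $F_1,F_2$ of the link, extracts from each of $F\cup F_1$, $F\cup F_2$ an $x$-variable whose index lies in $[j_s,i_1]$, takes the minimal such indices $a$ and $b$, and walks along the chain $F_1,\overbar{F}_a,\overbar{F}_{a+1},\dots,\overbar{F}_b,F_2$; the hypothesis $\dim F\le n-2$ enters in showing consecutive terms of this chain still meet after $F$ is removed. You instead partition the facets of the link by their cutset $T$, connect each $T$-block internally by moving one split coordinate at a time through the admissible interval $\bigl[\max(P\cap V(G_k)),\min(Q\cap V(G_k))\bigr]$ (this is where you use $\dim F\le n-2$), and then attach every $T$-block to the empty-cutset block at the single index $a^*=j_s$ via the shared variable $x_{j_s}$. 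What each approach buys: the paper's chain is shorter and connects two facets directly, but its case analysis (e.g.\ the claim that $a=b=i_1$ forces $y_{i_1}\in F\cup F_i$) tacitly uses that the relevant splits land inside $[j_s,i_1]$, which is delicate precisely when $j_s$ and $i_1$ lie in different components of $G\setminus T$; your version sidesteps this entirely, because $j_s$ is always an admissible split of its own component, and the only things to verify --- that $v_{k_0}=a^*$ is admissible, that $F\subseteq F(\emptyset,a^*)$, and that $x_{a^*}\notin F$ --- are exactly the points you identify as the crux and check correctly. The edge cases ($P=\emptyset$, $F=\emptyset$, disconnected $G$) are also handled properly, so the argument stands as a complete alternative proof.
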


\begin{proof}
If $F = \emptyset$, then $\mathrm{lk}_{\Delta_<} (F) = \Delta_<$ is connected. In fact, any facets of $\Delta_<$ have a non-empty intersection with a facet $F(\emptyset, v)$, for some $v \in V(G)$, and $F(\emptyset, v_1) \cap F(\emptyset, v_2) \neq \emptyset$, for all $v_1, v_2 \in V(G)$.  Hence, assume $F = \{x_{i_1}, \dots, x_{i_t}, y_{j_1}, \dots, y_{j_s}\} \in \Delta_{<}$, with $1 \leq j_1 < \cdots < j_s < i_1 < \cdots < i_t\leq n$ and $\dim F \leq n -2$. Let $F_1, F_2$ be facets of $\lk(F)$. If $F_1 \cap F_2 \neq \emptyset$, then they are connected and there is nothing to prove. Therefore, we may assume that $F_1 \cap F_2 = \emptyset$. $F \cup F_1$ and $F \cup F_2$ are facets of $\Delta_{<}$ and both of them contain $y_{j_s}$. By Corollary \ref{cor: delta}, there exist $x_a \in F\cup F_1$ and $x_b \in F \cup F_2$ such that $j_s \leq a,b \leq i_1$. Let $a = \min \{a \ | \ x_a \in F \cup F_1 \text{ and } j_s \leq a \leq i_1\}$ and $b = \min \{b \ | \ x_b \in F \cup F_2 \text{ and } j_s \leq b \leq i_1\}$.

Note that, if $a=b=i_1$, then $y_{i_1} \in F\cup F_i$, for $i=1,2$, but $y_{i_1} \not \in F$, then $y_{i_1} \in F_1 \cap F_2$, which is a contradiction since $F_1$ and $F_2$ are supposed to be disjoint. Moreover, if $a, b < i_1$ and $a=b$, then $x_a \in F_1 \cap F_2$, which is a contradiction, as well. Therefore, let $a\neq b$, and, without loss of generality, suppose $a<b$. Consider the facets $F(\emptyset,\vv)$, for $a \leq \vv \leq b$, namely $F(\emptyset,\vv)= \{x_i \ | \ \vv \leq i \leq n\} \cup \{y_j \ | \ 1 \leq j \leq \vv\}$. Note that, for all $a \leq \vv \leq b$, $F(\emptyset, \vv) \cap F = F$, hence $\overbar{F}_{\vv} = F(\emptyset, \vv) \setminus F = \{x_i \ | \ \vv \leq i \leq n, \ i\neq i_1, \dots, i_t\} \cup \{y_j \ | \ 1 \leq j \leq \vv, \ j \neq j_1, \dots, j_s\}$ is a facet of $\lk(F)$. Consider the sequence $F_1, \overbar{F}_a, \overbar{F}_{a+1},\dots, \overbar{F}_b, F_2$ of facets of $\lk(F)$. Note that $F_1 \cap \overbar{F}_a \supseteq \{x_a\}$ and $\overbar{F}_b \cap F_2 \supseteq \{y_b\}$. If $i_1 = j_s +1$, then $a =j_s$ and $b=i_1$, since $\dim F \leq \dim \Delta_{<} -2$, there exists either $i^* > i_1$ such that $x_{i^*} \not \in F$ or $j^* < j_s$ such that $y_{j^*} \not \in F$. It follows that either $\overbar{F}_a \cap \overbar{F}_b \supseteq \{x_{i^*}\}$ or $\overbar{F}_a \cap \overbar{F}_b \supseteq \{y_{j^*}\}$, that is $F_1, \overbar{F}_a, \overbar{F}_b, F_2$ is a sequence of facets of $\lk(F)$ that connects $F_1$ and $F_2$. If $i_1 \neq j_s +1$ and $a+1 \neq i_1$, it holds $\overbar{F}_a \cap \overbar{F}_{a+1} \supseteq \{x_{a+1}\}$ and $\overbar{F}_i \cap \overbar{F}_{i+1} \supseteq \{y_i\}$ for all $i = a+1, \dots, b-1$. If $i_1 \neq j_s +1$ and $a+1 = b =  i_1$, then $\overbar{F}_a \cap \overbar{F}_b = \{y_{i_1}\}$. Hence, $F_1, \overbar{F}_a, \overbar{F}_{a+1},\dots, \overbar{F}_b, F_2$ is a sequence of facets of $\lk(F)$ that connects $F_1$ and $F_2$.
Therefore, $\lk(F)$ is connected.
\end{proof}

\section{Accessible blocks with whiskers}\label{sec: blocks of acc graphs}
In this section we study a particular class of accessible graphs. We know from \cite[Theorem 4.12]{BMS2} and \cite{BN} that if an accessible graph is a block, then it is a complete graph. It arises a natural question:
\begin{center}
``Under which hypotheses a block with whiskers is accessible?'' 
 \end{center}


Let $G$ be a connected graph such that $J_G$ is unmixed and $B$ be a block of $G$. Denote by $W=\{w_1,\dots,w_r\}$ the set of cutpoints of $G$ which are vertices of $B$. Then  
\begin{equation}\label{eq:blockandGi}
 G=B \cup \left(\bigcup_{i=1}^r G_i\right)
\end{equation}
where $V(G_i)\cap V(B)=\{w_i\}$ for $i=1,\ldots,r$, and $B\setminus W, G_1\setminus\{w_1\}, \dots, G_r\setminus\{w_r\}$ are the connected components of $G \setminus W$.


By the decomposition (\ref{eq:blockandGi}), we define a \textit{block with whiskers}, namely $\overbar{B}$, a graph obtained, roughly speaking, by replacing each subgraph  $G_i$  with a whisker. That is 
\begin{enumerate}
    \item $V(\overbar{B})=V(B)\cup \{f_1,\ldots,f_r\}$;
    \item $E(\overbar{B})=E(B)\cup \{\{w_i,f_i\} \ | \ i=1,\ldots, r\}$.
\end{enumerate}
Note that $V(\overbar{B}) = V(G)/\sim$, where the relation $\sim$ identifies each vertex of $B$ with itself and, for $i=1,\dots,r$, if $a,b\in V(G_i)\setminus \{w_i\}$, then $a\sim b$, and we denote by $f_i$ the  equivalence class of $V(G_i)\setminus\{w_i\}$.

\begin{proposition}\label{pro:block_whiskers} Let $G$ be an accessible graph and let $B$ be a block of $G$. The graph $\overbar{B}$ constructed as above is accessible.
\end{proposition}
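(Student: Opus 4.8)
The plan is to reduce everything to a clean correspondence between the cutsets of $\overbar{B}$ and those cutsets of $G$ that are contained in $V(B)$, exploiting that each $G_i$ is glued to $B$ along the single vertex $w_i$, exactly as a whisker is.

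First I would observe that no free vertex $f_i$ can belong to a cutset of $\overbar{B}$: if $f_i\in T$, then restoring $f_i$ either attaches it to the component of $w_i$ (when $w_i\notin T$), leaving $c_{\overbar{B}}(T\setminus\{f_i\})=c_{\overbar{B}}(T)$, or makes it isolated (when $w_i\in T$), giving $c_{\overbar{B}}(T\setminus\{f_i\})=c_{\overbar{B}}(T)+1$; in neither case is the cutset inequality $c(T\setminus\{f_i\})<c(T)$ satisfied. Hence every cutset of $\overbar{B}$ is contained in $V(B)$.

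Next comes the key step: for every $T\subseteq V(B)$ I claim $c_{\overbar{B}}(T)=c_G(T)$. Writing $b(T)$ for the number of connected components of $B\setminus T$, I would argue that in both graphs the components beyond those of $B\setminus T$ are in bijection with $\{i : w_i\in T\}$. Indeed, by the decomposition (\ref{eq:blockandGi}) each $G_i\setminus\{w_i\}$ is a connected component of $G\setminus W$ and meets $B$ only at $w_i$; likewise each whisker meets $B$ only at $w_i$. Thus when $w_i\notin T$ both $G_i$ and the whisker $f_i$ merely hang off the component of $B\setminus T$ containing $w_i$, without merging distinct components, while when $w_i\in T$ both contribute exactly one extra component (the connected $G_i\setminus\{w_i\}$, respectively the isolated $f_i$). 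Therefore $c_{\overbar{B}}(T)=b(T)+|\{i : w_i\in T\}|=c_G(T)$. Since for $T\subseteq V(B)$ and $v\in T$ we also have $T\setminus\{v\}\subseteq V(B)$, the cutset inequalities for $\overbar{B}$ and for $G$ coincide, and combined with the previous paragraph I obtain $\CC(\overbar{B})=\{T\in\CC(G) : T\subseteq V(B)\}$.

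From this correspondence both required properties follow. For unmixedness I would use that it is well known that $J_G$ is unmixed if and only if $c_G(T)=|T|+1$ for every $T\in\CC(G)$ (a direct consequence of the primary decomposition (\ref{Eq:primarydec})); since $\overbar{B}$ is connected and every $T\in\CC(\overbar{B})$ is a cutset of $G$ with $c_{\overbar{B}}(T)=c_G(T)=|T|+1$, the ideal $J_{\overbar{B}}$ is unmixed as well. For accessibility of the set system I take a non-empty $T\in\CC(\overbar{B})$; then $T\in\CC(G)$, and since $G$ is accessible there is $t\in T$ with $T\setminus\{t\}\in\CC(G)$. As $T\setminus\{t\}\subseteq V(B)$, the correspondence gives $T\setminus\{t\}\in\CC(\overbar{B})$, so $T$ is accessible in $\overbar{B}$. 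The main obstacle is the component-count identity $c_{\overbar{B}}(T)=c_G(T)$: its content is precisely that collapsing each $G_i$ to a single whisker preserves, for every $T$, the number of connected components, which hinges on each $G_i$ being attached to $B$ through the unique cutpoint $w_i$, so that it can neither bridge two components of $B\setminus T$ nor split into several pieces. Everything else is a formal consequence of this identity together with the accessibility of $G$.
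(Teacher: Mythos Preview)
Your proof is correct and follows essentially the same approach as the paper: both establish the identity $c_{\overbar{B}}(T)=c_G(T)$ for every $T\subseteq V(B)$ and then read off unmixedness and accessibility of $\CC(\overbar{B})$ from the corresponding properties of $G$. The paper proves the component-count identity by constructing an explicit bijection between the components of $G\setminus T$ and those of $\overbar{B}\setminus T$ via the quotient map that collapses each $G_i\setminus\{w_i\}$ to $f_i$, whereas you obtain it by the counting formula $c_{\overbar{B}}(T)=b(T)+|\{i:w_i\in T\}|=c_G(T)$; this is a slightly more streamlined packaging of the same observation, and your explicit remark that $f_i$ can never lie in a cutset makes the reduction to $T\subseteq V(B)$ cleaner than in the paper.
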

\begin{proof} 
Let $\pi: V(G) \to V(G)/\sim$ be the canonical projection. Let $T \in \mathcal{C}(\overbar{B})$. By construction, for any $i=1,\ldots, r$ $f_i$ is a free vertex of $\overbar{B}$, hence $T\subset V(B)$. Denote by $\overbar{\pi}$ the restriction of $\pi$ to $V(G)\setminus T$. We prove that $\overbar{\pi}$ induces a bijection between the connected components of $G\setminus T$ and the ones of $\overbar{B}\setminus T$.

Let $A$ be a connected component of $G\setminus T$. For any $i=1,\ldots, r$, let $G_i$ be the connected component of $G \setminus W$, where $W$ is the set of all the cutpoints of $\overbar{B}$. Let $a,b \in A$, and $a , a_1, \dots, a_\ell, b$ be a path in $V(G)\setminus T$ from $a$ to $b$. If $a$ and $b$ belong to the same $G_i$, then $\overbar{\pi}(a) = \overbar{\pi}(a_j) = \overbar{\pi}(b) = f_i$, for all $j =1, \dots, \ell$. Therefore, they are obviously connected in $\overbar{B}\setminus T$. If $a \in B$, and $b \in G_i$, then there exists $j$ such that $a_j, \dots, a_{\ell} \in G_i \cup \{w_i\}$ with, in particular, $a_j = w_i$. Then $\overbar{\pi}(a) = a, \overbar{\pi}(a_1) = a_1, \dots, \overbar{\pi}(a_{j-1})=a_{j-1}, f_i$ is a path from $\overbar{\pi}(a)$ and $\overbar{\pi}(b)=f_i$. The other cases follow by the same argument. 
Therefore, if $A$ is a connected component of $G\setminus T$, then $\overbar{\pi}(A)$ is a connected component of $\overbar{B}\setminus T$.

Let $D$ be a connected component of $\overbar{B}\setminus T$. Let $c,d \in D$ and let $c,u_1,\dots, u_{\ell},d$ be a path in $D$ from $c$ to $d$. Note that, by the definitions of path and $\overbar{B}$, for $i=1, \dots, \ell$, $u_i \in V(B)\setminus T$, that is $\overbar{\pi}^{-1}(u_i)=u_i$. If $c = f_j$ (resp. $d = f_j$) for some $j=1,\dots,r$, then set $\overbar{\pi}^{-1}(c) = v$ (resp. $\overbar{\pi}^{-1}(d) = v$), where $v \in V(H_j)$ and $\{w_j, v \}\in E(G)$. Otherwise, $\overbar{\pi}^{-1}(c) = c$ (resp. $\overbar{\pi}^{-1}(d) = d$). Then, $\overbar{\pi}^{-1}(c), u_1,\dots, u_{\ell},\overbar{\pi}^{-1}(d)$ is a path in $V(G)\setminus T$. It follows that if $D$ is a connected component of $\overbar{B}\setminus T$, then $\left(D \setminus \{f_j\}_{j \in J}\right) \cup \bigcup_{j\in J}G_j$ is a connected component of $G\setminus T$, where $J$ is the set of indices such that $f_j\in D$. 

The bijection between the connected components of $G\setminus T$ and the ones of $\overbar{B}\setminus T$ implies $c_G(T)=c_{\overbar{B}}(T)$. Since $J_G$ is unmixed by hypothesis, then $J_{\overbar{B}}$ is unmixed, as well. Moreover, if $T \in \mathcal{C}(\overbar{B})$, then $T \in \mathcal{C}(G)$. Due to the accessibility of $G$, there exists a vertex $a$ such that $T\setminus \{a\}\subset V(B)$ is a cutset of $G$ and so, using the bijection, $T\setminus \{a\}$ is a cutset of $\overbar{B}$, that $\overbar{B}$ is accessible.
\end{proof}

A block with a fixed number of vertices, say $n$, and minimum number of edges is a cycle $C_n$. 
It is useful to connect the  degree of the vertices with the cycle rank.

\begin{lemma}\label{lem:cyclerank}
    Let $G$ be a connected graph. The cycle rank of $G$ is 
    \[
    m(G) = 1+\frac{\sum_{v\in V(G)} (\deg v - 2)}{2}.
    \]
\end{lemma}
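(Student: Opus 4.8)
The plan is to prove the formula by computing the sum $\sum_{v \in V(G)} (\deg v - 2)$ and relating it to the quantity $|E(G)| - |V(G)| + 1$ appearing in the definition of cycle rank given in the Preliminaries. The fundamental tool is the handshaking lemma, which states that $\sum_{v \in V(G)} \deg v = 2|E(G)|$, since each edge contributes exactly $2$ to the total degree count (one for each of its two endpoints).

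First I would split the sum as
\[
\sum_{v \in V(G)} (\deg v - 2) = \sum_{v \in V(G)} \deg v - \sum_{v \in V(G)} 2 = 2|E(G)| - 2|V(G)|,
\]
using the handshaking lemma for the first term and the fact that there are $|V(G)|$ summands for the second. Dividing by $2$ gives $\frac{\sum_{v \in V(G)} (\deg v - 2)}{2} = |E(G)| - |V(G)|$, and adding $1$ yields
\[
1 + \frac{\sum_{v \in V(G)} (\deg v - 2)}{2} = |E(G)| - |V(G)| + 1 = m(G),
\]
where the last equality is exactly the defining formula $m(G) = |E(G)| - |V(G)| + 1$ recalled earlier for connected graphs. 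This completes the argument.

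There is no real obstacle here: the statement is a one-line consequence of the handshaking lemma combined with the definition of cycle rank. The only point requiring the hypothesis that $G$ is connected is the validity of the closed formula $m(G) = |E(G)| - |V(G)| + 1$ for the cycle rank, which was stated in the Preliminaries precisely for connected graphs (it counts the chords of any spanning tree, and a spanning tree exists and has $|V(G)| - 1$ edges only when $G$ is connected). The handshaking lemma itself holds for any finite simple graph, so the connectivity assumption enters solely through the cycle-rank identity.
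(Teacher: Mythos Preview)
Your proof is correct and follows essentially the same approach as the paper: both arguments reduce the formula to the identity $m(G)=|E(G)|-|V(G)|+1$ via the handshaking lemma $\sum_{v}\deg v = 2|E(G)|$ together with $\sum_{v}1=|V(G)|$. The only cosmetic difference is that the paper cites Harary for the cycle-rank identity and writes the two sums separately before combining them, whereas you split the single sum directly.
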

\begin{proof}
    From  (\cite[Theorem 4.5(a)]{HARARY}), we know $m(G)=q-p+1$ where $q=|E(G)|$ and $p=|V(G)|$. We can see
\[
    2q = \sum_{v\in V(G)} \deg v \quad \quad \mbox{and} \quad \quad p = \sum_{v\in V(G)} 1.
\]
So, we conclude that
\[
    m(G) = q-p+1=  1+\frac{\sum_{v\in V(G)} (\deg v - 2)}{2}.
\]
\end{proof}

By the previous lemma, we observe that fixed a cycle rank of $G$ the number of vertices with degree greater than 2 is bounded, but we do not have any information on the number of vertices $v$ with $\deg v\leq 2$. We will show that under the hypothesis of accessibility this cardinality is bounded, too.

Now we are going to state some general results for accessible blocks that we are going to exploit for the classification of accessible graphs with cycle rank 3 and in Section \ref{sec:chaincyc}. Let us introduce some notation. 
\begin{definition}
Given a block $B$, we say that a vertex $v\in V(B)$ is \textit{pivotal} if $\deg{v}\geq 3$.
\end{definition} 
\begin{definition}
Let $B$ be a block and let $a,b \in V(B)$ be two pivotal vertices. A path $L_i$ of length $i$ from $a$ to $b$ and such that any $v\in V(L_i)\setminus \{a,b\}$ is not pivotal is said a \textit{line} from $a$ to $b$.
\end{definition}

\begin{lemma}\label{lem:P2_P3}
Let $G$ be an accessible graph and $B$ a block of $G$. If two pivotal vertices $a,b$ of $B$ are connected by a line $L_i$, with $i\geq 2$, then $a$ is a cutpoint in $\overbar{B}$ and $b$ is not. Moreover, the following conditions hold:
\begin{enumerate}
\item $i<4$;
\item if $i=3$, there exists a unique vertex $c \in V(L_i)\setminus \{a,b\}$ which is a cutpoint in $\overbar{B}$. In particular, $c$ is such that $\{a,c\}\in E(G)$;
\item if $m(G) \geq 3$, there are no other lines $L_j$ from $a$ to $b$, with $j \in \{2,3\}$.
\end{enumerate}
\end{lemma}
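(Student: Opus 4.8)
The plan is to carry out the entire argument inside the block-with-whiskers $\overbar{B}$, which is accessible by Proposition \ref{pro:block_whiskers}, and to convert accessibility and unmixedness into statements about component counts. Two facts will drive everything. First, since $J_{\overbar{B}}$ is unmixed and $\overbar{B}$ is connected, the height computation $\mathrm{ht}\,P_T(\overbar{B}) = |V(\overbar{B})| + |T| - c(T)$ shows that unmixedness is \emph{equivalent} to $c_{\overbar{B}}(T) = |T| + 1$ for every cutset $T$. Second, I will establish a \emph{separation principle}: if $v,w$ are two vertices of $\overbar{B}$ that are not cutpoints, then $\overbar{B}\setminus\{v,w\}$ is connected; otherwise $\{v,w\}$ is a cutset (deleting either one alone keeps $\overbar{B}$ connected, so both singletons strictly increase the component count), but it cannot be made accessible, since neither $\{v\}$ nor $\{w\}$ is a cutset. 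I will use repeatedly that $B$ is $2$-connected, so deleting a single vertex of $B$ keeps it connected, and that the interior of the line meets the rest of $\overbar{B}$ only through $a$ and $b$ (interior vertices have degree $2$ in $B$).

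Write the line as $a=u_0,u_1,\dots,u_i=b$, and call an interior vertex \emph{whiskered} if it carries a whisker, i.e.\ is a cutpoint of $\overbar{B}$. To prove the opening claim I analyse the cutset $\{a,b\}$. If neither $a$ nor $b$ were a cutpoint, then $\{a,b\}$ would be a cutset (the interior is isolated) violating the separation principle. If both were cutpoints, deleting $\{a,b\}$ isolates the whiskers $f_a,f_b$ and the connected interior, and moreover, since $a$ is pivotal it has a neighbour $z\in V(B)\setminus(\{a,b\}\cup\text{interior})$ whose component is none of these; hence $c(\{a,b\})\geq 4 > 3 = |\{a,b\}|+1$, contradicting unmixedness. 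So exactly one of $a,b$ is a cutpoint, and I relabel so that $a$ is and $b$ is not.

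Next I pin down the interior whiskers using the separation principle with the non-cutpoint $b$: if some $u_k$ with $k\leq i-2$ were \emph{not} whiskered, then $u_k$ and $b$ would be two non-cutpoints whose deletion isolates the nonempty tail $u_{k+1},\dots,u_{i-1}$, a contradiction; hence $u_1,\dots,u_{i-2}$ are all whiskered. For \textbf{(1)}, assuming $i\geq 4$ makes both $u_1$ and $u_2$ whiskered, and then the cutset $\{a,u_2\}$ leaves $u_1$ attached only to its whisker $f_1$, so its deletion isolates $\{u_1,f_1\}$ together with $f_a$, $f_2$, and one further component, giving $c(\{a,u_2\})=4\neq 3$ and contradicting unmixedness; thus $i<4$. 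For \textbf{(2)}, when $i=3$ the vertex $u_1$ is whiskered by the step above, while the same $\{a,u_2\}$-computation forbids $u_2$ from being whiskered; hence $u_1$ is the unique interior cutpoint $c$, and $\{a,c\}\in E(G)$ since $u_1$ is adjacent to $a$ on the line.

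Finally, for \textbf{(3)} I again delete $\{a,b\}$. A second line $L_j$ contributes a second isolated interior, so deleting $\{a,b\}$ yields $f_a$, the two interiors, and the outside part; unmixedness forces the total to be $3$, which is possible only if there is no outside part and exactly these two lines occur. Pivotality of $a$ and $b$ then forces the third edge at each to be the chord $\{a,b\}$, so $B$ is a single cycle plus at most this chord, whence its cycle rank is at most $2$; reading $m(G)$ as the cycle rank carried by the block $B$ (note $m(\overbar{B})=m(B)$, since whiskers add no chords), this contradicts $m(G)\geq 3$. I expect the main obstacle to be the careful bookkeeping of components in the presence of interior whiskers—verifying in each case that the chosen cutset isolates exactly the claimed pieces—together with the final cycle-rank identification in \textbf{(3)}, where one must check that the hypothesis is genuinely being used through the cycle rank of the block hosting the two lines; this is also the point where the example $K_4$ minus an edge (cycle rank $2$) shows the hypothesis $m(G)\geq 3$ cannot be dropped.
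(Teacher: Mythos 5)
Your proof is correct and takes essentially the same route as the paper's: pass to the accessible block-with-whiskers $\overbar{B}$ and play accessibility against unmixedness (in the form $c_{\overbar{B}}(T)=|T|+1$) on two-element cutsets such as $\{a,b\}$, $\{u_1,b\}$ and $\{a,u_2\}$ --- your ``separation principle'' is precisely the paper's recurring observation that a two-element cutset must contain a cutpoint, and your component counts all check out. The only substantive divergence is in part (3), where the paper tacitly assumes the fourth component $\overbar{B}\setminus(L_i\cup L_j'\cup\{f\})$ is nonempty while you justify this by computing that otherwise $B$ consists of the two lines plus the chord $\{a,b\}$ and hence $m(B)=2$; your reading of the hypothesis as the cycle rank of the block $B$ is the right one (it is how the lemma is invoked in Lemmas \ref{lem:C3C4} and \ref{lem:K4}), since $m(G)\geq 3$ alone does not force $m(B)\geq 3$ when $G$ has other blocks of positive cycle rank.
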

\begin{proof}
By Proposition \ref{pro:block_whiskers}, we can focus on the graph $\overbar{B}$ which is accessible, too. 

Let $a$ and $b$ be two pivotal vertices of $B$ connected by a line $L_i$, with $i\geq 2$. We observe that $T=\{a,b\}$ is a cutset of $B$, and hence of $\overbar{B}$. In fact, $B\setminus T$ consists of at least two connected components: $L_i \setminus \{a,b\}$ and $B \setminus L_i$. Since $\overbar{B}$ is accessible, at least one between $a$ and $b$ has to be a cutpoint, assume $a$. Namely, there is a whisker $\{a,f\} \in E(\overbar{B})$. Moreover, at most one of them is a cutpoint, otherwise there should be another whisker $\{b,f'\}$ and $c_{\overbar{B}}(T)=4$, namely $\{f\}$, $\{f'\}$, $L_i\setminus \{a,b\}$ and $\overbar{B}\setminus (L_i\cup\{f,f'\})$.

From now on, we assume that $a$ is a cutpoint in $\overbar{B}$, while $b$ is not.

(1) Let $L_i = a, a_1, \cdots, a_{i-1}, b $ be a line from $a$ to $b$. Assume $i \geq 4$. $T = \{a,a_2\} \in \mathcal{C}(\overbar{B})$ and using the same argument of above, $a_2$ is not a cutpoint and $\overbar{B} \setminus T$ consists of three connected components: $\{f\}$, $\{a_1\}$ and $\overbar{B}\setminus ( T \cup \{a_1\})$. At the same time, $T'=\{a_2,b\} \in \mathcal{C}(\overbar{B})$ but it induces only two connected components: $\{a_3, \dots, a_{i-1}\}$ and $\overbar{B}\setminus ( L_i \setminus \{a,a_1\})$, which is a contradiction. 

(2) Let $i=3$ and $L_3 = a, a_1, a_2, b$ be a line from $a$ to $b$. Since $T=\{a_1, b\} \in \mathcal{C}(\overbar{B})$, $\overbar{B}$ is accessible and $b$ is not a cutpoint of $\overbar{B}$, then $a_1$ is a cutpoint of $\overbar{B}$. Moreover, since $T'=\{a,a_2\}\in \mathcal{C}(\overbar{B})$, then $a_2$ is not a cutpoint otherwise, $c_{\overbar{B}}(T)=4$.

(3) Suppose there are two lines $L_j'\neq L_i$, with $i,j \in \{2,3\}$, from $a$ to $b$. Consider the cutset $T=\{a,b\}$. Then, $\overbar{B}\setminus T$ consists of at least 4 connected components: $\{f\}$, $L_i \setminus \{a,b\}$, $L_j' \setminus \{a,b\}$, and $\overbar{B}\setminus (L_i \cup L_j')$, which is a contradiction.

\end{proof}

\begin{lemma}\label{lem:P_3}
    Let $G$ be an accessible graph and $B$ a block of $G$. If two pivotal vertices $a,b$ of $B$ are connected by a line $L_3$, then $\{a,b\}\in E(B)$.
\end{lemma}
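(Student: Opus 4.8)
The plan is to argue by contradiction: assume $\{a,b\}\notin E(B)$ and derive a violation of the unmixedness of $J_G$. First I would replace $G$ by the block with whiskers $\overline{B}$, which is accessible by Proposition \ref{pro:block_whiskers}, so that the cutpoint data produced by the previous lemma is at my disposal. Writing the line as $L_3\colon a,a_1,a_2,b$, Lemma \ref{lem:P2_P3} (after the usual choice of which endpoint is the cutpoint) gives that $a$ is a cutpoint of $\overline{B}$, that $a_1$ is the unique interior cutpoint and is adjacent to $a$, and that $a_2$ and $b$ are not cutpoints; in particular $a$ and $a_1$ carry whiskers $f$ and $f'$, while $a_2,b$ do not.

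Next I would test the cutset $T=\{a,b\}$, which is a genuine cutset since $\{a\}\in\mathcal{C}(\overline{B})$ and $a,b$ separate the interior of the line from the rest of the block. In $\overline{B}\setminus T$ the whisker $f$ yields an isolated component $\{f\}$, and the line interior together with the whisker on $a_1$ yields a component $\{a_1,a_2,f'\}$, because $a_2$ loses its only other neighbour $b$. The aim is to produce a further component, forcing $c_{\overline{B}}(T)\ge 4$; since unmixedness demands $c_{\overline{B}}(T)=|T|+1=3$, this is the desired contradiction. Concretely I would examine the off-line neighbourhoods $N(a)\setminus\{a_1\}$ and $N(b)\setminus\{a_2\}$, both non-empty because $a$ and $b$ are pivotal, and try to show that, once $a$ and $b$ are deleted and no edge $\{a,b\}$ is present, these two neighbourhoods can no longer lie in a common component of $\overline{B}\setminus\{a,b\}$.

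The main obstacle is precisely this disconnection claim: a priori the off-line neighbours of $a$ and of $b$ might still be joined inside $\overline{B}\setminus\{a,b\}$ through the remainder of the block, so non-adjacency alone does not immediately split the leftover component. To close the gap I would consider a shortest path from $a$ to $b$ avoiding the line; its first and last interior vertices belong to $N(a)\setminus\{a_1\}$ and $N(b)\setminus\{a_2\}$, and I would distinguish two cases according to whether this path meets a pivotal vertex. If it avoids pivotal vertices it is itself a second line between $a$ and $b$ of length $\ge 2$, which is forbidden by Lemma \ref{lem:P2_P3}(3), so the neighbourhoods must split and the extra component appears; the genuinely delicate case is when the connecting path runs through pivotal vertices, which must be resolved by iterating the cutset and component analysis on the pieces that the cutpoint $a_1$ already splits off from $\overline{B}$. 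Pinning down this dichotomy rigorously is where the real work lies, and it is the step I expect to be hardest to make airtight.
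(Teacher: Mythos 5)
Your proposal has a genuine gap, and it is exactly the one you flag yourself. After fixing the cutset $T=\{a,b\}$ you correctly list the components $\{f\}$ and $\{a_1,a_2,f'\}$ of $\overbar{B}\setminus T$, but unmixedness only forces $c_{\overbar{B}}(T)=3$, i.e.\ it forces the \emph{rest} of the block to stay connected; to get a contradiction you must prove that $B\setminus\{a,b,a_1,a_2\}$ falls apart whenever $\{a,b\}\notin E(B)$. That is not a consequence of non-adjacency: in a $2$-connected graph the off-line neighbourhoods of $a$ and $b$ can easily remain joined through the remainder of the block, and your proposed repair (a shortest $a$--$b$ path avoiding the line) only disposes of the subcase where that path is itself a line, via Lemma \ref{lem:P2_P3}. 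The case where the connecting path passes through other pivotal vertices --- which is the generic situation, e.g.\ when $a$ and $b$ have a common pivotal neighbour or are joined through a triangle $\{c,d\}$ of further pivotal vertices --- is precisely where the cutset $\{a,b\}$ yields exactly $|T|+1$ components and no contradiction, and you leave it unresolved. So the strategy of extracting the contradiction from the single cutset $\{a,b\}$ cannot be completed as stated.

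The paper avoids this entirely by reformulating the goal: it suffices to show $a$ and $b$ are \emph{not separable}, so one assumes a minimal cutset $T$ separating $a$ from $b$ exists and derives a contradiction from $T$ itself rather than from $\{a,b\}$. Such a $T$ must meet $L_3\setminus\{a,b\}$, and the key asymmetry from Lemma \ref{lem:P2_P3}(2) --- $a_1$ is a cutpoint of $\overbar{B}$ (hence carries a whisker) while $a_2$ is not --- means that exchanging $a_1$ for $a_2$ inside $T$ produces a cutset $T'$ with $|T'|=|T|$ but $c(T)=c(T')+1$, which is incompatible with unmixedness. This swap argument is local to the line and never needs to know how $a$ and $b$ are connected through the rest of $B$, which is exactly the information your approach would have to control. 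If you want to salvage your route, you would in effect have to reprove this separator analysis; I recommend adopting the minimal-separator swap instead.
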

\begin{proof}
    It is sufficient to show that the vertices $a$ and $b$ are not separable. By Lemma \ref{lem:P2_P3}, $a$ is a cutpoint in $\overbar{B}$ and let $\{a,f\} \in E(\overbar{B})$ be the whisker on $a$. Then,
    \[
        G\setminus \{a,b\} = \{f\} \sqcup (L_3\setminus \{a,b\}) \sqcup H,
    \]
where $H$ is a non-empty connected component of $G\setminus \{a,b\}$.
Assume by contradiction that $a$ and $b$ are separable. Let $L_3 = a,a_1,a_2,b$ be a line from $a$ to $b$ and let $T$ be a minimal cutset that separates $a$ and $b$. $T$ has vertices in $L_3\setminus \{a,b\}$ and in $H$. If $a_1\in T$, then $T'=\left(T\setminus\{a_1\}\right)\cup \{a_2\}$ is a cutset, as well. By Lemma \ref{lem:P2_P3} (2), $a_1$ is a cutpoint, but $a_2$ is not. Therefore, $|T| = |T'|$ but $c(T) = c(T')+1$, which is a contradiction.
\end{proof}

As an application, by means of the implementation described in Section \ref{sec: computation}, we will prove that the accessible blocks with whiskers of cycle rank 3 are the ones in Figures \ref{fig:chainsCR3} and \ref{fig:K4CR3}. From Lemma \ref{lem:cyclerank}, we have a bound on the number of pivotal vertices and, when $m(G) = 3$, it holds 
\[
\sum_{v \text{ pivotal vertices of } G} \left(\deg v - 2 \right)= 2\left(m(G) - 1\right) = 4. 
\]
All of the possible blocks with cycle rank 3 are showed in Figure \ref{Fig:ClassesCR3}, where the dot points denote pivotal vertices $v$, the number is $\deg v -2$ and the dashed line represents a line from a pivotal vertex to another.
As regards accessible graphs $\overbar{B}$ with $m(\overbar{B}) = 3$, they are obtained from the blocks $B$ in Figure \ref{Fig:ClassesCR3} by adding opportune whiskers. By Lemma \ref{lem:P2_P3}, there are no accessible graphs obtained from the blocks in the class of Figure \ref{Fig:ClassesCR3} (A). 
In Figures \ref{fig:chainsCR3} and \ref{fig:K4CR3}, all the accessible graphs $\overbar{B}$ with $m(\overbar{B}) = 3$ are displayed. As regards Figure   \ref{fig:chainsCR3}, the graphs (1)--(4) are obtained from the ones in Figure \ref{Fig:ClassesCR3} (B), while the graph (5) from the ones in Figure \ref{Fig:ClassesCR3} (C). These five graphs are chain of cycles that we characterize in the next section. Finally, the graphs in Figure \ref{fig:K4CR3} are all obtained from the blocks in Figure \ref{Fig:ClassesCR3} (D). In particular, they are obtained by the complete graph $K_4$ substituting any edge by a line $L_i$, with $i \in {1,2,3}$, and by adding whiskers in order to have accessibility of the graph. We denote this class of graphs by $\mathcal{K}_4$.

\begin{figure}[H]
 \centering
 \begin{subfigure}[t]{0.22\textwidth}
  \centering
  \resizebox{!}{0.7\textwidth}{
\begin{tikzpicture}
\draw[dashed] (0,1) circle (1cm);
\filldraw (0,0) circle (\rad) node [anchor=south]{$2$};
\filldraw (0,2) circle (\rad) node [anchor=south]{$2$};
\draw[dashed] (0,2) arc (120:240:1.154cm);
\draw[dashed] (0,0) arc (-60:60:1.154cm);
\end{tikzpicture}}
\caption{}\label{fig:ClassA}
\end{subfigure}%
\begin{subfigure}[t]{0.25\textwidth}
\centering 
  \resizebox{!}{0.6\textwidth}{
\begin{tikzpicture}
\draw[dashed] (0,0.577) circle (1.154cm);
\filldraw (-1,0) circle (\rad) node [anchor=north]{$1$};
\filldraw (1,0) circle (\rad) node [anchor=north]{$1$};
\filldraw (0,1.732) circle (\rad) node [anchor=south]{$2$};
\draw[dashed] (1,0)--(0,1.732);
\draw[dashed] (-1,0)--(0,1.732);
\draw[dashed] (1,0)--(-1,0);
\end{tikzpicture}}
\caption{}\label{fig:ClassB}
\end{subfigure}%
 \begin{subfigure}[t]{0.25\textwidth}
  \centering
 \resizebox{!}{0.6\textwidth}{
\begin{tikzpicture}

\filldraw (-1,0) circle (\rad) node [anchor=north]{$1$};
\filldraw (1,0) circle (\rad) node [anchor=north]{$1$};
\filldraw (-1,2) circle (\rad) node [anchor=south]{$1$};
\filldraw (1,2) circle (\rad) node [anchor=south]{$1$};
\draw[dashed] (1,0)--(-1,0);
\draw[dashed] (1,2)--(-1,2);
\draw[dashed] (1,0)--(1,2);
\draw[dashed] (-1,0)--(-1,2);

\draw[dashed] (-1,2) arc (120:240:1.154cm);
\draw[dashed] (1,2) arc (60:-60:1.154cm);
\end{tikzpicture}}
\caption{}\label{fig:ClassC}
\end{subfigure}%
\begin{subfigure}[t]{0.25\textwidth}
\centering
 \resizebox{!}{0.6\textwidth}{
\begin{tikzpicture}
\draw[dashed] (0,0.577 ) circle (1.154cm);
\draw[dashed] (1,0 )--(0,0.577 );
\draw[dashed] (0,0.577 )--(0,1.732 );
\draw[dashed] (0,0.577 )--(-1,0 );
\filldraw (0,0.577 ) circle (\rad) node [anchor=north]{$1$};
\filldraw (-1,0 ) circle (\rad) node [anchor=north]{$1$};
\filldraw (1,0 ) circle (\rad) node [anchor=north]{$1$};
\filldraw (0,1.732 ) circle (\rad) node [anchor=south]{$1$};
\end{tikzpicture}}
\caption{}\label{fig:ClassD}
\end{subfigure}\\
\caption{All classes of blocks having cycle rank 3.}\label{Fig:ClassesCR3}
\end{figure}

\begin{figure}[H]
\renewcommand{\thesubfigure}{\arabic{subfigure}}
 \centering
  \begin{subfigure}[t]{0.2\textwidth}
  \centering
  \resizebox{!}{0.5\textwidth}{
\begin{tikzpicture}
\filldraw (1-0.86,0.5) circle (\rad);
\filldraw ( 1,0) circle (\rad);
\filldraw ( 2,0) circle (\rad);
\filldraw (2.86,0.5) circle (\rad);
\filldraw (1.5,1.5) circle (\rad);

\draw (1-0.86,0.5)--(1.5,1.5);
\draw (1-0.86,0.5)--(1,0);
\draw (1.5,1.5)--(1,0);
\draw (2,0)--(1,0);
\draw (1.5,1.5)--(2,0);
\draw (2,0)--(2.86,0.5);
\draw (1.5,1.5)--(2.86,0.5);

\filldraw (1.5,2.5) circle (\rad);
\draw (1.5,1.5)--(1.5,2.5);
\end{tikzpicture}}
\caption{}\label{fig:Chain1}
\end{subfigure}%
\begin{subfigure}[t]{0.2\textwidth}
\centering 
   \resizebox{!}{0.5\textwidth}{
\begin{tikzpicture}
\filldraw (1-0.86,0.5) circle (\rad);
\filldraw ( 1,0) circle (\rad);
\filldraw ( 2,0) circle (\rad);
\filldraw (2.86,0.5) circle (\rad);
\filldraw (1.5,1.5) circle (\rad);

\draw (1-0.86,0.5)--(1.5,1.5);
\draw (1-0.86,0.5)--(1,0);
\draw (1.5,1.5)--(1,0);
\draw (2,0)--(1,0);
\draw (1.5,1.5)--(2,0);
\draw (2,0)--(2.86,0.5);
\draw (1.5,1.5)--(2.86,0.5);

\filldraw (1,-1) circle (\rad);
\filldraw (2,-1) circle (\rad);
\draw (1,-1)--(1,0);
\draw (2,-1)--(2,0);
\end{tikzpicture}}
\caption{}\label{fig:Chain2}
\end{subfigure}%
\centering
 \begin{subfigure}[t]{0.2\textwidth}
  \centering
   \resizebox{!}{0.5\textwidth}{
\begin{tikzpicture}
\filldraw (1-0.951*1.581,0.309*1.581) circle (\rad);
\filldraw (1-0.951*1.581+0.5,0.309*1.581+1.5) circle (\rad);
\filldraw ( 1,0) circle (\rad);
\filldraw ( 2,0) circle (\rad);
\filldraw (2.86,0.5) circle (\rad);
\filldraw (1.5,1.5) circle (\rad);

\draw (1-0.951*1.581,0.309*1.581)-- (1-0.951*1.581+0.5,0.309*1.581+1.5);
\draw (1-0.951*1.581,0.309*1.581)--(1,0);
\draw (1.5,1.5)--(1-0.951*1.581+0.5,0.309*1.581+1.5);
\draw (1.5,1.5)--(1,0);
\draw (2,0)--(1,0);
\draw (1.5,1.5)--(2,0);
\draw (2,0)--(2.86,0.5);
\draw (1.5,1.5)--(2.86,0.5);

\filldraw (1.816,2.448) circle (\rad);
\filldraw (1-0.951*1.581+0.5+0.316,0.309*1.581+1.5+0.948) circle (\rad);
\draw  (1.816,2.448)--(1.5,1.5);
\draw (1-0.951*1.581+0.5+0.316,0.309*1.581+1.5+0.948)--(1-0.951*1.581+0.5,0.309*1.581+1.5);
\end{tikzpicture}}
\caption{}\label{fig:Chain3}
\end{subfigure}%
\begin{subfigure}[t]{0.2\textwidth}
\centering
 \resizebox{!}{0.5\textwidth}{
\begin{tikzpicture}
\filldraw (1-0.951*1.581,0.309*1.581) circle (\rad);
\filldraw (1-0.951*1.581+0.5,0.309*1.581+1.5) circle (\rad);
\filldraw (2+0.951*1.581,0.309*1.581) circle (\rad);
\filldraw (2+0.951*1.581-0.5,0.309*1.581+1.5) circle (\rad);
\filldraw ( 1,0) circle (\rad);
\filldraw ( 2,0) circle (\rad);
\filldraw (1.5,1.5) circle (\rad);

\draw (1-0.951*1.581,0.309*1.581)-- (1-0.951*1.581+0.5,0.309*1.581+1.5);
\draw (1-0.951*1.581,0.309*1.581)--(1,0);
\draw (1.5,1.5)--(1-0.951*1.581+0.5,0.309*1.581+1.5);
\draw (1.5,1.5)--(1,0);
\draw (2,0)--(1,0);
\draw (1.5,1.5)--(2,0);
\draw (2,0)--(2+0.951*1.581,0.309*1.581);
\draw (2+0.951*1.581-0.5,0.309*1.581+1.5) --(2+0.951*1.581,0.309*1.581);
\draw (1.5,1.5)--(2+0.951*1.581-0.5,0.309*1.581+1.5) ;

\filldraw (1.5,2.5) circle (\rad);
\filldraw (2+0.951*1.581-0.5-0.316,0.309*1.581+2.448) circle (\rad);
\filldraw (1-0.951*1.581+0.5+0.316,0.309*1.581+1.5+0.948) circle (\rad);
\draw (1-0.951*1.581+0.5+0.316,0.309*1.581+1.5+0.948)--(1-0.951*1.581+0.5,0.309*1.581+1.5);
\draw  (1.5,2.5)--(1.5,1.5);
\draw (2+0.951*1.581-0.5-0.316,0.309*1.581+1.5+0.948)--(2+0.951*1.581-0.5,0.309*1.581+1.5);
\end{tikzpicture}}
\caption{}\label{fig:Chain4}
\end{subfigure}%
\begin{subfigure}[t]{0.2\textwidth}
\centering
 \resizebox{!}{0.5\textwidth}{
\begin{tikzpicture}
\filldraw (1,0) circle (\rad);
\filldraw (2.5,0) circle (\rad);
\filldraw (1,1.5) circle (\rad);
\filldraw (2.5,1.5) circle (\rad);
\filldraw (1-0.942,0.333) circle (\rad);
\filldraw (2.5+0.942,0.333) circle (\rad);

\draw(1,0)--(2.5,0);
\draw(1,1.5)--(2.5,1.5);
\draw(1,0)--(1,1.5);
\draw(2.5,0)--(2.5,1.5);

\draw(1,0)--(1-0.942,0.333);
\draw(1,1.5)--(1-0.942,0.333);
\draw(2.5,0)--(2.5+0.942,0.333) ;
\draw(2.5+0.942,0.333) --(2.5,1.5);

\filldraw (1,2.5) circle (\rad);
\filldraw (2.5,2.5) circle (\rad);
\draw(1,1.5)--(1,2.5);
\draw(2.5,1.5)--(2.5,2.5);
\end{tikzpicture}}
\caption{}\label{fig:Chain5}
\end{subfigure}\\
\caption{The accessible chains of cycles with cycle rank 3.}\label{fig:chainsCR3}
\end{figure}

\begin{figure}[H]
\renewcommand{\thesubfigure}{\arabic{subfigure}}
\centering
\begin{subfigure}[t]{0.25\textwidth}
\centering
 \resizebox{!}{0.6\textwidth}{
\begin{tikzpicture}
\filldraw (0,0.577) circle (\rad) node [anchor=north]{$d$};
\filldraw (-1,0) circle (\rad) node [anchor=north]{$b$};
\filldraw (1,0) circle (\rad) node [anchor=north]{$c$};
\filldraw (0,1.732) circle (\rad) node [anchor=south]{$a$};
\draw (1,0)--(0,0.577);
\draw (0,0.577)--(0,1.732);
\draw (0,0.577)--(-1,0);
\draw (-1,0)--(1,0);
\draw (-1,0)--(0,1.732);
\draw (1,0)--(0,1.732);
\end{tikzpicture}}
\caption{}\label{fig:K4.1}
\end{subfigure}%
\begin{subfigure}[t]{0.25\textwidth}
\centering
 \resizebox{!}{0.6\textwidth}{
\begin{tikzpicture}
\filldraw (0,0.577) circle (\rad) node [anchor=north]{$d$};
\filldraw (-1,0) circle (\rad) node [anchor=north]{$b$};
\filldraw (-1,1.155) circle (\rad) node [anchor=south]{$e$};
\filldraw (0.5,0.288) circle (\rad);
\filldraw (1,0) circle (\rad) node [anchor=north]{$c$};
\filldraw (0,1.732) circle (\rad)node [anchor= south east]{$a$};
\draw (1,0)--(0,0.577);
\draw (0,0.577)--(0,1.732);
\draw (0,0.577)--(-1,0);
\draw(-1,1.155)--(-1,0);
\draw(-1,1.155)--(0,1.732);
\draw (0,1.732)--(1,0);
\draw (-1,0)--(1,0);
\node at (0.5-0.13,0.288+0.35) {$f$};

\filldraw (0.707,1.732+0.707) circle (\rad);
\filldraw (1.707,0.707) circle (\rad) ;
\draw (1,0)--(1.707,0.707);
\draw (0.707,1.732+0.707)--(0,1.732);
\end{tikzpicture}}
\caption{}\label{fig:K4.2}
\end{subfigure}%
\begin{subfigure}[t]{0.25\textwidth}
\centering
 \resizebox{!}{0.6\textwidth}{
\begin{tikzpicture}
\filldraw (0,0.577) circle (\rad) node [anchor=north]{$d$};
\filldraw (-1,0) circle (\rad) node [anchor=north]{$b$};
\filldraw (-1,1.155) circle (\rad) node [anchor=south]{$e$};
\filldraw (1,0) circle (\rad) node [anchor=north]{$c$};
\filldraw (0,1.732) circle (\rad)node [anchor= south east]{$a$};
\draw (1,0)--(0,0.577);
\draw (0,0.577)--(0,1.732);
\draw (0,0.577)--(-1,0);
\draw(-1,1.155)--(-1,0);
\draw(-1,1.155)--(0,1.732);
\draw (-1,0)--(1,0);
\draw (1,0)--(0,1.732);

\filldraw (0.707,1.732+0.707) circle (\rad);
\filldraw (1.707,0.707) circle (\rad) ;
\filldraw (-0.8, 1) circle (\rad) ;
\draw (1,0)--(1.707,0.707);
\draw (0.707,1.732+0.707)--(0,1.732);
\draw (-0.8, 1)--(0,0.577);
\end{tikzpicture}}
\caption{}\label{fig:K4.3}
\end{subfigure}%
\begin{subfigure}[t]{0.25\textwidth}
\centering
 \resizebox{!}{0.6\textwidth}{
\begin{tikzpicture}
\filldraw (0,0.577) circle (\rad) node [anchor=north]{$d$};
\filldraw (-1,0) circle (\rad) node [anchor=north]{$b$};
\filldraw (-1,1.155) circle (\rad) node [anchor=south east]{$e$};
\filldraw (1,0) circle (\rad) node [anchor=north]{$c$};
\filldraw (0,1.732) circle (\rad)node [anchor= south east]{$a$};
\draw (1,0)--(0,0.577);
\draw (0,0.577)--(0,1.732);
\draw (0,0.577)--(-1,0);
\draw(-1,1.155)--(-1,0);
\draw(-1,1.155)--(0,1.732);
\draw (-1,0)--(1,0);
\draw (1,0)--(0,1.732);

\filldraw (0.707,1.732+0.707) circle (\rad);
\filldraw (1.707,0.707) circle (\rad) ;
\filldraw (-1.2, 1.155+0.98) circle (\rad) ;
\draw (1,0)--(1.707,0.707);
\draw (0.707,1.732+0.707)--(0,1.732);
\draw  (-1.2,1.155+0.98)--(-1,1.155);
\end{tikzpicture}}
\caption{}\label{fig:K4.4}
\end{subfigure}\\
\caption{The class $\mathcal{K}_4$.}\label{fig:K4CR3}
\end{figure}

In the next results, by focusing on the lines connecting two pivotal vertices, we exhibit that, starting from blocks belong to the class (D) of Figure \ref{Fig:ClassesCR3}, there are no other possible accessible blocks with whiskers than the graphs (1)--(4) in Figure \ref{fig:K4CR3}.

\begin{lemma}\label{lem:K4}
Let $\overbar{B}$ be an accessible graph such that $B$ is a block with $m(B)= 3$ that belongs to the class (D) of Figure \ref{Fig:ClassesCR3}. Then in $B$ there are at most two lines $L_2$ that have no vertex in common and there is no line $L_3$.
\end{lemma}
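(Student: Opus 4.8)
The plan is to use that class (D) of Figure~\ref{Fig:ClassesCR3} is combinatorially the complete graph $K_4$: it has exactly four pivotal vertices, each of degree $3$ in $B$, and any two of them are joined by a single line. Since $\overline{B}$ is accessible, $J_{\overline{B}}$ is unmixed, so by the height formula for the primary components $P_T(\overline{B})$ every cutset $T$ satisfies $c_{\overline{B}}(T)=|T|+1$; the strategy is to contradict this equality with a suitable cutset whenever a forbidden configuration occurs.

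For the nonexistence of an $L_3$ line I would invoke Lemma~\ref{lem:P_3} directly. Because in $K_4$ two pivotal vertices $a,b$ are joined by exactly one line, an $L_3$ between them would be their only connection and would give $\{a,b\}\notin E(B)$; this contradicts Lemma~\ref{lem:P_3}, which forces $\{a,b\}\in E(B)$ as soon as $a$ and $b$ are joined by an $L_3$. Hence no line is an $L_3$, and by Lemma~\ref{lem:P2_P3}(1) every line has length $1$ or $2$.

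The heart of the statement is that the $L_2$ lines are pairwise vertex-disjoint; the bound ``at most two'' then follows since a matching of $K_4$ has at most two edges. To prove disjointness I would show that no pivotal vertex is incident to two $L_2$ lines. Assume for contradiction that $a$ lies on two $L_2$ lines, toward $b$ and toward $c$, with midpoints $p$ and $q$, and let $r$ be the third neighbour of $a$ in $B$ (on the line toward $d$); as $\deg_B a=3$ we have $N_B(a)=\{p,q,r\}$. Then $T=N_B(a)$ is a cutset of $\overline{B}$: deleting it isolates $a$ (together with its whisker, if $a$ carries one) from the rest of the graph, while restoring any single vertex of $T$ reconnects $a$ to the remainder and strictly lowers the number of components, so $T\in\mathcal{C}(\overline{B})$ with $|T|=3$.

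The contradiction comes from counting the components of $\overline{B}\setminus T$. Deleting $a$ from $K_4$ leaves the triangle on $b,c,d$, and removing in addition $p,q,r$ only strips off the two midpoints $p,q$ and either the vertex $d$ or the midpoint of the line $\{a,d\}$. In every case the surviving pivotal vertices remain connected through the line $\{b,c\}$ and the other edges among $b,c,d$, so besides the component of $a$ there are at most two further components (the common part and, only when $r=d$ happens to be whiskered, its isolated leaf). Thus $c_{\overline{B}}(T)\le 3<4=|T|+1$, contradicting unmixedness. Consequently each pivotal vertex meets at most one $L_2$ line, the $L_2$ lines form a matching of $K_4$, and there are at most two of them, pairwise without a common vertex. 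The main obstacle is precisely this last component count: one must verify, uniformly over the lengths of the remaining lines $\{a,d\},\{b,c\},\{b,d\},\{c,d\}$ and over all admissible whisker placements, that $\overline{B}\setminus N_B(a)$ never breaks into the four pieces that unmixedness would demand.
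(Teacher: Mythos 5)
Your treatment of the $L_3$ part is fine and matches the paper: class (D) forces a unique line between any two pivotal vertices, so an $L_3$ from $a$ to $b$ would give $\{a,b\}\notin E(B)$, contradicting Lemma \ref{lem:P_3} (the paper phrases this as: Lemma \ref{lem:P_3} forces the edge $\{a,b\}$, which would push the cycle rank above $3$).

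The $L_2$ part, however, has a genuine gap, and it is exactly the one you flag at the end. Your component count for $T=N_B(a)=\{p,q,r\}$ ignores the possibility that the midpoints $p$ and $q$ carry whiskers: each whiskered vertex of $T$ leaves behind an isolated leaf in $\overbar{B}\setminus T$, so $c_{\overbar{B}}(T)$ is $2$ plus the number of whiskered vertices among $\{p,q,r\}$ (plus nothing else). If exactly two of $p,q,r$ are whiskered you get $c_{\overbar{B}}(T)=4=|T|+1$, and unmixedness gives no contradiction from this cutset at all; so the inequality $c_{\overbar{B}}(T)\le 3$ is simply false in general, and the single cutset $N_B(a)$ together with unmixedness cannot finish the proof. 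Worse, accessibility actually \emph{forces} whiskers onto $p$ and $q$: the paper first uses the cutsets $\{a,b\}$, $\{a,c\}$ and $\{a,b,c\}$ to show $a$ is whiskered while $b,c$ are not, then applies accessibility to the cutsets $\{c,d,a_1\}$ and $\{b,d,a_1'\}$ (where $a_1=p$, $a_1'=q$), none of whose proper subsets disconnect the block, to conclude that $d$, $p$ and $q$ must all be whiskered; only then does the cutset $\{d,p,q\}$ produce $5$ components and the contradiction. So the missing ingredient in your argument is a preliminary use of \emph{accessibility} (not just unmixedness) to pin down the whisker placements before any component count can be decisive; with the placements left free, your chosen cutset admits configurations with exactly $|T|+1$ components and the argument does not close.
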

\begin{proof}
 Let $a,b,c,d$ be the pivotal vertices of $\overbar{B}$. Without loss of generality, assume that there are two lines $L_2$ in $B$ having a vertex in common: one from $a$ to $b$ and a second one from $a$ to $c$. We claim that $a$ has a whisker in $\overbar{B}$ and $b$ and $c$ have no whiskers. In fact, $\{a,b\}$ and $\{a,c\}\in \CC(\overbar{B})$. By Lemma \ref{lem:P2_P3}, either $a$ has a whisker or both $b$ and $c$ have whiskers. Moreover, $T=\{a,b,c\}\in\CC(\overbar{B})$ and if $b$ and $c$ have whiskers $c(T)=5$. Hence the claim follows.
 
 Let $a_1$ (resp. $a_1'$) be the vertex of degree $2$ in the line $L_2$ from $a$ to $b$ (resp. to $c$). Let $T'=\{c,d,a_1\}\in \CC(\overbar{B})$ and $T''=\{b,d,a_1'\}\in \CC(\overbar{B})$. We observe that there are no subsets of $T'$ (resp. $T''$) disconnecting the block. Hence $d$, $a_1$ and $a_1'$ have whiskers. But, for $T'''=\{d,a_1,a_1'\}\in \CC(\overbar{B})$, it holds $c(T''')=5$, which is a contradiction.  
 
 Finally, suppose by contradiction that we have a line $L_3$ from $a$ to $b$. By Lemma \ref{lem:P_3}, $\{a,b\}\in E(G)$. This implies that the cycle rank of $G$ is greater than $3$.
\end{proof}

\begin{corollary}
The accessible graphs $\overbar{B}$ such that $B$ belongs to the class in Figure \ref{Fig:ClassesCR3} (D) are all and only the graphs in $\mathcal{K}_4$ displayed in Figure \ref{fig:K4CR3}.
\end{corollary}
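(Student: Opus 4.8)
The plan is to combine Lemma~\ref{lem:K4} with a finite case analysis over the admissible block shapes. First I would use Lemma~\ref{lem:K4} to pin down $B$: since $B$ lies in the class (D) of Figure~\ref{Fig:ClassesCR3}, it is obtained from the complete graph $K_4$ on the pivotal vertices $a,b,c,d$ by replacing each of its six edges with a line $L_i$, $i\in\{1,2,3\}$. Lemma~\ref{lem:K4} forbids any line $L_3$ and allows at most two lines $L_2$, which must moreover be vertex-disjoint. Because $K_4$ is edge-transitive and its pairs of non-adjacent edges form a single orbit under $\mathrm{Aut}(K_4)$, up to isomorphism only three block shapes survive: $K_4$ itself, $K_4$ with one edge subdivided once, and $K_4$ with two independent edges subdivided once. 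Each $L_2$ contributes a single vertex of degree $2$, so every candidate $B$ has at most six vertices and hence only finitely many whisker placements to inspect.

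Next I would determine, for each of these three shapes, which whisker configurations make $\overbar{B}$ accessible. The structural lemmas do most of the bookkeeping. For each line $L_2$, Lemma~\ref{lem:P2_P3} forces exactly one of its two pivotal endpoints to carry a whisker and to be the corresponding cutpoint, while the other endpoint carries none; this fixes, up to symmetry, the whiskers on the endpoints of the subdivided edges, and Lemma~\ref{lem:P_3} is not needed since no $L_3$ occurs. For the shape $B=K_4$ no edge is subdivided, so every vertex of $B$ is free; since $K_4$ is complete it is already accessible and no whisker is needed for accessibility, whereas attaching a whisker on a free vertex would make $\overbar{B}$ decomposable. Hence this shape contributes only $K_4$, that is graph $(1)$ of Figure~\ref{fig:K4CR3}. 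For the two remaining shapes one runs through the finitely many ways of placing the forced and the optional whiskers on the pivotal vertices and on the degree-$2$ vertices.

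For each such placement I would verify accessibility directly: that $J_{\overbar{B}}$ is unmixed, equivalently that $c(T)=|T|+1$ for every cutset $T$, and that every non-empty cutset of $\overbar{B}$ is accessible. This is a finite check which is carried out with the implementation of \cite{LMRR}, but it can equally be done by hand by inspecting the cutsets of the form $\{a,b\}$, $\{a,c\}$, $\{a,b,c\}$ and their analogues, exactly in the spirit of the proof of Lemma~\ref{lem:K4}. Matching the surviving configurations against the list then shows that the two-subdivision shape yields a single accessible graph and the one-subdivision shape yields exactly two, so that together with $K_4$ the class $\mathcal{K}_4$ consists of precisely the four graphs $(1)$--$(4)$ of Figure~\ref{fig:K4CR3}.

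The main obstacle is the \emph{completeness} of the case analysis: one must ensure that every accessible whisker configuration is recorded and that no spurious one is admitted, and this requires examining the entire cutset system of each candidate rather than a single separating set. The structural lemmas reduce the problem to a bounded search over finitely many block shapes, each on at most six vertices, so the difficulty is organisational rather than conceptual; the bulk of the work, and the reliance on the computation of \cite{LMRR}, lies in checking unmixedness jointly with the accessibility of all cutsets for each of these configurations.
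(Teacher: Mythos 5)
Your overall strategy matches the paper's: use Lemma~\ref{lem:K4} to reduce to the three block shapes ($K_4$, $K_4$ with one subdivided edge, $K_4$ with two independent subdivided edges) and then analyse whisker placements. However, there is a genuine error in your treatment of the unsubdivided case. You exclude all whiskered copies of $K_4$ on the grounds that ``attaching a whisker on a free vertex would make $\overbar{B}$ decomposable.'' Decomposability is not an obstruction to accessibility: by Lemma~\ref{lem:strong_unmixed_decomposable}, a graph that decomposes at a free vertex into accessible pieces is itself accessible, so $K_4$ with any set of whiskers on distinct vertices is accessible. The paper's proof accordingly records this case as ``a $K_4$ with or without whiskers,'' and your version of the classification would be incomplete (it would miss accessible graphs $\overbar{B}$ whose block is $K_4$).

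A secondary weakness is that for the two remaining shapes you reduce the determination of admissible whisker configurations to ``a finite check'' delegated to the implementation of \cite{LMRR}. The paper argues these cases directly: for two subdivided edges, $\overbar{B}$ is bipartite and one invokes the classification of accessible bipartite graphs of cycle rank $3$ to land on graph (2); for one subdivided edge, one observes that the non-empty cutsets of the block are exactly $\{a,b\}$ and $\{c,d,e\}$, that Lemma~\ref{lem:P2_P3} forces the whisker on $a$ and forbids one on $b$, and that since no proper subset of $\{c,d,e\}$ is a cutset of the block, exactly two of $c,d,e$ must carry whiskers, yielding (up to symmetry) precisely graphs (3) and (4). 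Replacing these arguments by an appeal to computation is defensible for a six-vertex search but weaker than what the structural lemmas already give you; more importantly, the $K_4$ case above needs to be corrected before the ``all and only'' claim holds.
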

\begin{proof}

If $B$ has no line $L_2$, then $\overbar{B}$ is a $K_4$ with or without whiskers (Figure \ref{fig:K4CR3} (1)).
 
If $B$ has 2 lines $L_2$, $\overbar{B}$ is a bipartite graph and the only accessible bipartite graph with cycle rank 3 is the one in Figure \ref{fig:K4CR3} (2).
 
 Suppose $B$ has exactly one line $L_2$. Assume it is from $a$ to $b$ and denote by $e$ the unique vertex of degree $2$ in $L_2$. Let $c$ and $d$ be the other 2 pivotal vertices. We observe that the non-empty cutsets of $B$ are $\{a,b\}$ and $\{c,d,e\}$. By Lemma \ref{lem:P2_P3}, without loss of generality, we may assume that $a$ has a whisker and $b$ has no whisker.  Since $\{c,d,e\}$ has cardinality $3$ and none of its subsets is a cutset of the block, we have that exactly $2$ vertices in $\{c,d,e\}$ have a whisker. That is either both $c$ and $d$ have a whisker, or one whisker is on $e$ and the other one is, without loss of generality, on $c$. Then the obtained $\overbar{B}$ are the non-bipartite and non-complete graphs (3) and (4) in Figure \ref{fig:K4CR3}.
 
\end{proof}



\section{Chain of cycles}\label{sec:chaincyc}

In this section, we define a new family of graphs, the chain of cycles, and we classify the ones with Cohen-Macaulay binomial edge ideal by means of combinatorial properties. 

Given a graph $G$, we denote by $G_v$ the graph obtained from $G$ by adding edges $\{u,w\}$ to $E(G)$ for all $u,w \in V(G)$ adjacent to $v$. We recall the following definition given first in \cite{BMS2}.
\begin{definition}
Let $G$ be a graph. $J_G$ is \textit{strongly unmixed} if the connected components of $G$ are complete graphs or if $J_G$ is unmixed and there exists a cutpoint $v$ of $G$ such that $J_{G\setminus \{v\}}$, $J_{G_v}$ and $J_{G_{v}\setminus \{v\}}$ are strongly unmixed.
\end{definition}

\begin{definition}\label{def:chaincycle}
 Let $B$ be a block with $m(B) = r$ such that $B=\bigcup_{i=1}^r D_i$ where $D_i$ are cycles, and if $j=i+1$ then $E(D_i)\cap E(D_j)=E(P)$, where $P$ is a path, otherwise $E(D_i)\cap E(D_j)=\emptyset$. We call $B$ a \textit{chain of cycles}.

\end{definition}

\begin{lemma}\label{lem:C3C4} Let $\overbar{B}$ be an accessible graph such that $B = \bigcup_{i=1}^r D_i$ is a chain of cycles. Then $D_i\in\{C_3,C_4\}$ and $E(D_i)\cap E(D_{i+1})$ is an edge of $B$.  
\end{lemma}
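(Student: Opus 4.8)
The plan is to prove both assertions by exploiting the line structure inside the chain of cycles together with the previously established constraints on lines in accessible blocks. First I would observe that in a chain of cycles $B = \bigcup_{i=1}^r D_i$, the pivotal vertices (those of degree $\geq 3$) are precisely the endpoints of the shared paths $E(D_i) \cap E(D_{i+1})$, and every cycle $D_i$ is decomposed by its pivotal vertices into lines connecting them. Thus each $D_i$ is, combinatorially, built out of lines between pivotal vertices, and I can apply Lemma \ref{lem:P2_P3} and Lemma \ref{lem:P_3} to each such line.

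The key step is the length bound. By Lemma \ref{lem:P2_P3}(1), any line $L_i$ between two pivotal vertices in an accessible block must have $i < 4$, so $i \in \{1, 2, 3\}$. A cycle $D_i$ of length $\ell$ has at least two pivotal vertices on it (its intersection with the neighbouring cycles), which split it into lines whose lengths sum to $\ell$. I would argue that each $D_i$ cannot contain a line $L_3$: if a cycle $D_i$ contained a line $L_3$ between pivotal vertices $a$ and $b$, then by Lemma \ref{lem:P_3} the edge $\{a,b\}$ belongs to $E(B)$, but this extra edge together with the $L_3$ and the rest of the cycle would raise the cycle rank, contradicting the chain-of-cycles structure (where $D_i$ is a single cycle and the $\{a,b\}$ edge would create an additional independent cycle, i.e. a chord). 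Hence every line inside a cycle $D_i$ has length $1$ or $2$. Combined with the fact that a cycle has at most one pair of pivotal vertices available as a "separating" pair in the accessible setting, this forces $\ell \leq 4$, giving $D_i \in \{C_3, C_4\}$. I would treat the boundary cycles $D_1$ and $D_r$ (which have only one shared path) and the interior cycles (two shared paths) as mild case distinctions; Lemma \ref{lem:P2_P3}(3) is useful here, as for $m(G) \geq 3$ it forbids two distinct lines $L_j$ with $j \in \{2,3\}$ between the same pair of pivotal vertices, ruling out a cycle built from two parallel $L_2$'s without an intervening pivotal vertex.

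For the second assertion, that $E(D_i) \cap E(D_{i+1})$ is a single edge rather than a longer path, I would suppose for contradiction that the shared path $P = E(D_i) \cap E(D_{i+1})$ has length $\geq 2$, so it contains an interior vertex $u$. Since $u$ lies only on the path $P$ it has degree $2$ in $B$, so $u$ is a non-pivotal interior vertex of a line joining the two endpoints $a, b$ of $P$, which are pivotal. This line has length $\geq 2$, and by Lemma \ref{lem:P2_P3} exactly one of $a, b$ is a cutpoint of $\overbar{B}$; moreover the same pair $\{a,b\}$ is the pivotal pair shared by both $D_i$ and $D_{i+1}$, so each of those two cycles contributes its own line between $a$ and $b$ in addition to $P$. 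This yields at least two distinct lines $L_j$ with $j \in \{2,3\}$ joining $a$ and $b$ (namely $P$ itself and the outer arc of one of the cycles), directly contradicting Lemma \ref{lem:P2_P3}(3) once $m(G) \geq 3$. For the base case $m(G)$ small, I would verify the claim directly. Therefore the shared intersection must be a single edge.

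The main obstacle I anticipate is handling the combinatorial bookkeeping of which endpoints of the shared paths are pivotal and carefully verifying that the cutsets I invoke (such as $\{a,b\}$ and its relatives) genuinely lie in $\CC(\overbar{B})$ and induce the claimed number of components; in particular I must be sure that after the chain structure is fixed, the only cutsets arising from a pivotal pair are exactly those whose component count is controlled by Lemma \ref{lem:P2_P3}. A subtler point is ruling out degenerate configurations where a single vertex is shared by three or more cycles, but the chain definition (where only consecutive cycles meet, and they meet in a path) should preclude this; I would make the pivotal/line decomposition of each $D_i$ fully explicit before applying the lemmas, so that each invocation of Lemma \ref{lem:P2_P3} and Lemma \ref{lem:P_3} is unambiguous.
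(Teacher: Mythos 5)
Your overall strategy (decompose each $D_i$ into lines between pivotal vertices and invoke Lemmas \ref{lem:P2_P3} and \ref{lem:P_3}) starts in the same place as the paper, but it stops short of the key step and the length bound does not follow. After excluding long lines you only know that every line has length at most $2$ (or $3$), and an \emph{interior} cycle $D_i$ has up to four pivotal vertices: the two endpoints $a,b$ of $E(D_{i-1})\cap E(D_i)$ and the two endpoints $c,d$ of $E(D_i)\cap E(D_{i+1})$. These split $D_i$ into four lines, so your bound only yields $|V(D_i)|\le 6$; a $C_6$ with two opposite shared edges joined by two lines $L_2$ is not excluded by anything you say. The phrase ``a cycle has at most one pair of pivotal vertices available as a separating pair'' is exactly the missing content, not a fact you can cite. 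The paper closes this gap with a separate unmixedness argument: taking the cutsets $T=\{a,b\}$, $T'=\{c,d\}$ and $T''=\{a,c\}$ (the last one a cutset precisely when the line from $a$ to $c$ has length $\ge 2$), accessibility forces two distinct cutpoints $u,v\in\{a,b,c\}$, and then $c_{\overbar{B}}(\{u,v\})=4$ contradicts unmixedness. This is what forces the connecting lines between the two shared edges to be single edges, and hence $D_i\in\{C_3,C_4\}$. Your argument contains no substitute for it. A secondary problem: your blanket exclusion of lines $L_3$ via Lemma \ref{lem:P_3} proves too much, since a boundary cycle $D_1=C_4$ whose intersection with $D_2$ is a single edge $\{a,b\}$ has an $L_3$ as its outer arc, with $\{a,b\}\in E(B)$ and no increase in cycle rank; the chord argument only applies when the $L_3$ is a proper sub-arc of $D_i$.

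For the second assertion your appeal to Lemma \ref{lem:P2_P3}(3) also does not apply as stated when both $D_i$ and $D_{i+1}$ are interior cycles: their outer arcs from $a$ to $b$ pass through the other pivotal vertices, so they are not lines, and the lemma hypothesis fails. The paper instead argues directly: if the shared path has an interior vertex, then $\overbar{B}\setminus\{a,b\}$ has at least four components (the interior of the shared path, the part of the chain on each side, and the whisker on whichever of $a,b$ is a cutpoint), contradicting unmixedness. Your underlying intuition (too many components hanging off $\{a,b\}$) is the right one, but you should run the component count directly rather than through a lemma whose hypotheses are not met.
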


\begin{proof}
 If $r\in \{1,2\}$, the claim follows by \cite{R2}. From now on, assume $r\geq 3$, that is $m(B) \geq 3$.
 
 Let $i=1$, and let $a,b \in V(D_1)\cap V(D_2)$ be pivotal vertices of $B$. By Lemma \ref{lem:P2_P3}, there is a unique line $L_i$, with $i \in \{2,3\}$, from $a$ to $b$. Hence, we may assume $E(D_1)\cap E(D_2)$ is an edge and $D_1$ is either $C_3$ or $C_4$. By the same argument, $D_r$ has the same property.

 Let $i\in\{2,\ldots,r-1\}$ and let $a,b \in V(D_{i})\cap V(D_{i+1})$ be pivotal vertices of $B$. $T=\{a,b\}$ is a cutset of $\overbar{B}$ and since $\overbar{B}$ is accessible, either $a$ or $b$ is a cutpoint in $\overbar{B}$. Therefore, $E(D_i)\cap E(D_{i+1})$ is an edge, due to the unmixedness of $J_{\overbar{B}}$.
 
 Let $a,b \in V(D_{i-1})\cap V(D_i)$ and $c,d \in V(D_{i})\cap V(D_{i+1})$ be pivotal vertices of $B$. Let $T =\{a,b\}$ and $T'=\{c,d\}$. Assume that $c\not\in T$ and, without loss of generality, $L_j$ is a line from $a$ to $c$. We will prove that $j=1$. By contradiction, suppose $j>1$. Hence $T''=\{a,c\}$ is a cutset. By Lemma \ref{lem:P2_P3} applied to $T$, in $T'$ and $T''$ there are two distinct vertices $u,v\in\{a,b,c\}$ that are cutpoints. We obtain a contradiction since $c_{\overbar{B}}(\{u,v\})=4$.
 
It follows that $\{a,c\}$ is an edge and either $b=d$ or $\{b,d\}$ is an edge. That is $D_i$ is either $C_3$ or $C_4$.
\end{proof}

\begin{remark}
\label{rmk:aibi}
By Lemma \ref{lem:C3C4} we can relabel the vertices of $B$ so that $V(D_i)\cap V(D_{i+1})=\{w_i,u_i\}$ and such that if $w_i\neq w_{i+1}$  (resp. $u_i\neq u_{i+1}$) then the edge $\{w_i,w_{i+1}\}$ (resp. $\{u_i,u_{i+1}\}$)  belongs to $E(D_{i+1})$ and does not belong to any cycle $D_j$ for $j\neq i+1$. 
\end{remark}

\begin{lemma}\label{lem:cutchain}
Let $\overbar{B}$ be an accessible graph such that $B = \bigcup_{i=1}^r D_i$ is a chain of cycles. Following the labelling defined in Remark \ref{rmk:aibi}, every $w_i$ is a cutpoint in $\overbar{B}$ and $u_i$ is not a cutpoint in $\overbar{B}$.
\end{lemma}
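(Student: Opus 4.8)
The plan is to first translate the statement about cutpoints into a statement about whiskers. Since $B$ is a block containing a cycle, it is $2$-connected, so for any single vertex $v\in V(B)$ the graph $B\setminus\{v\}$ stays connected; consequently, in $\overbar{B}$ a vertex $v\in V(B)$ is a cutpoint if and only if a whisker is attached to $v$ (removing $v$ isolates that whisker, whereas removing a whisker-free vertex leaves $\overbar{B}$ connected). Thus it suffices to prove that every $w_i$ carries a whisker while no $u_i$ does.

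Next I would exploit unmixedness through the ``bottleneck'' cutsets. For $i=1,\dots,r-1$ set $T_i=\{w_i,u_i\}$, the two endpoints of the edge $E(D_i)\cap E(D_{i+1})$. By the chain structure this edge is the only link between the subchains $D_1\cup\dots\cup D_i$ and $D_{i+1}\cup\dots\cup D_r$, so $B\setminus T_i$ has exactly two connected components and $T_i\in\CC(\overbar{B})$. Counting components of $\overbar{B}\setminus T_i$ gives
\[
c_{\overbar{B}}(T_i)=2+\varepsilon_{w_i}+\varepsilon_{u_i},
\]
where $\varepsilon_v\in\{0,1\}$ records whether a whisker sits on $v$ (whiskers on other vertices stay attached to the two block components). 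Since $J_{\overbar{B}}$ is unmixed, every cutset $T$ satisfies $c_{\overbar{B}}(T)=|T|+1$, hence $c_{\overbar{B}}(T_i)=3$ and $\varepsilon_{w_i}+\varepsilon_{u_i}=1$: exactly one vertex of each pair $T_i$ is a cutpoint of $\overbar{B}$. This is the same ``$c_{\overbar{B}}=4$ is impossible'' mechanism already used in the proof of Lemma \ref{lem:P2_P3}.

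It remains to check that these cutpoints lie consistently on a single one of the two threads produced by the labelling of Remark \ref{rmk:aibi}; this is the main obstacle. I would argue locally inside the cycle $D_{i+1}$, which contains both pairs $T_i$ and $T_{i+1}$. The labelling of Remark \ref{rmk:aibi} forces a clean dichotomy: if the two threads pinch (i.e.\ $w_i=w_{i+1}$ or $u_i=u_{i+1}$) then $D_{i+1}=C_3$, and otherwise $D_{i+1}=C_4$ with $E(D_i)\cap E(D_{i+1})$ and $E(D_{i+1})\cap E(D_{i+2})$ as opposite edges and $\{w_i,w_{i+1}\},\{u_i,u_{i+1}\}$ as the two connecting edges. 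In the $C_3$ case, the common (pinch) vertex lies on one thread; if it carries a whisker then both outer vertices are whisker-free, and if it does not then both outer vertices must carry whiskers, so in either case the cutpoints of $T_i$ and $T_{i+1}$ lie on the same thread. In the $C_4$ case, a ``crossed'' placement of whiskers on two opposite corners, say on $w_i$ and $u_{i+1}$ (or on $u_i$ and $w_{i+1}$), would make $\{w_i,u_{i+1}\}$ a cutset whose removal yields the left subchain, the right subchain, and two isolated whiskers, i.e.\ $c_{\overbar{B}}=4>3$, contradicting unmixedness; hence the whiskers again sit on a single thread.

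Chaining this pairwise consistency along $i=1,\dots,r-2$ shows that the cutpoints of all the $T_i$ belong to one of the two global threads of Remark \ref{rmk:aibi}. Swapping the names $w\leftrightarrow u$ globally if necessary (an operation that preserves the defining properties of that labelling), I may assume this thread is the $w$-thread, so that every $w_i$ carries a whisker and is therefore a cutpoint of $\overbar{B}$, while every $u_i$ is whisker-free and hence not a cutpoint. The delicate point throughout is the $C_4$ computation: recognizing that deleting two opposite corners of the square separates the left and right subchains is exactly what turns unmixedness into the required rigidity.
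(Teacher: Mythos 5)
Your argument is correct and follows essentially the same route as the paper: there, too, one observes that each $\{w_i,u_i\}$ is a cutset, so accessibility plus unmixedness force exactly one cutpoint per pair, and consistency is propagated along the chain via the diagonal cutsets $\{w_i,u_{i+1}\}$ and $\{u_i,w_{i+1}\}$ (your ``crossed'' $C_4$ configuration), finishing by induction after a global $w\leftrightarrow u$ relabelling. Your explicit reduction of ``cutpoint of $\overbar{B}$'' to ``carries a whisker'' and the separate $C_3$/$C_4$ case analysis are just more detailed renderings of the same mechanism.
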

\begin{proof}
 We observe that $\{w_1,u_1\}$ is a cutset of $\overbar{B}$. Hence, due to accessibility of $\overbar{B}$ either $w_1$ or $u_1$ is a cutpoint in $\overbar{B}$. Without loss of generality, we may assume $w_1$ is a cutpoint. We observe that also $\{u_1,w_2\}$,  $\{w_1,u_2\}$ are cutsets of $\overbar{B}$. Hence, $w_2$ must be a cutpoint and $u_2$ cannot be a cutpoint. Applying the same argument for all $\{w_i,u_i\}$, the assertion follows. 
\end{proof}

\begin{remark}\label{rmk:WU}
From now on, thanks to Lemma \ref{lem:P_3} and Lemma \ref{lem:cutchain},  we may consider the following partition of the set of vertices of $B$:
\[
V(B) = W \sqcup U,
\]
where $W$ consists of all the cutpoints of $\overbar{B}$, and $U=V(B)\setminus W$. We observe that the induced subgraphs on $W$ and $U$ (respectively) are paths. 
\end{remark}

\begin{lemma}\label{lem:C4nextC3}
Let $\overbar{B}$ be an accessible graph such that $B = \bigcup_{i=1}^r D_i$ is a chain of cycles. If $D_i=C_4$, then $D_{i+1}=C_3$.
\end{lemma}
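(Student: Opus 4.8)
The plan is to argue by contradiction, assuming $D_i=C_4$ and $D_{i+1}=C_4$, and to produce a cutset $T$ of $\overbar{B}$ with $c_{\overbar{B}}(T)>|T|+1$. This is impossible, because the unmixedness of $J_{\overbar{B}}$ (which holds since $\overbar{B}$ is accessible) forces $c_{\overbar{B}}(T)=|T|+1$ on every cutset $T$ of the connected graph $\overbar{B}$. The vertex I would exploit is the cutpoint $w_i$ that $D_i$ and $D_{i+1}$ share: by Lemma \ref{lem:cutchain} it is a cutpoint of $\overbar{B}$, so it carries a whisker $f$, while $u_i$ is not a cutpoint; recall also that $\{w_i,u_i\}$ is precisely the edge along which the two squares are glued.

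First I would pin down the local structure of two adjacent $C_4$'s. Using the labelling of Remark \ref{rmk:aibi} together with Lemma \ref{lem:C3C4}, a cycle $D_k=C_4$ forces \emph{both} paths to advance, i.e.\ $w_{k-1}\neq w_k$ and $u_{k-1}\neq u_k$ with $\{w_{k-1},w_k\},\{u_{k-1},u_k\}\in E(D_k)$; otherwise (if one of the two equalities held) the shared edges would meet and $D_k$ would only be a triangle. If instead $k$ is a boundary index, the square carries two pendant degree-$2$ vertices, and a short unmixedness argument applied to the cutset that isolates the degree-$2$ tip shows that the neighbour of $w_k$ inside $D_k$ is again a cutpoint. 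In every case, therefore, the neighbours of $w_i$ in $B$ are exactly $u_i$ together with two further vertices $a\in V(D_i)$ and $b\in V(D_{i+1})$ (with $a\neq b$, since $V(D_i)\cap V(D_{i+1})=\{w_i,u_i\}$), and $a,b$ are cutpoints of $\overbar{B}$, hence each carries its own whisker.

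Then I would take $T=N_B(w_i)=\{u_i,a,b\}$, so that $|T|=3$. Deleting $T$ leaves $w_i$ adjacent only to $f$, so $\{w_i,f\}$ is one component; deleting the cutpoints $a$ and $b$ isolates their two whiskers; and deleting $u_i$ together with the isolation of $w_i$ cuts the gluing edge $\{w_i,u_i\}$, which separates the left part (the cycles $D_1,\dots,D_{i-1}$ and the remnant of $D_i$) from the right part (the remnant of $D_{i+1}$ and $D_{i+2},\dots,D_r$), both non-empty. This already exhibits the five distinct components $\{w_i,f\}$, the two isolated whiskers of $a$ and $b$, the left part and the right part, so $c_{\overbar{B}}(T)\geq 5>4=|T|+1$, contradicting unmixedness and finishing the proof.

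The main obstacle is the bookkeeping in the second step: one must ensure that $a$ and $b$ are genuinely cutpoints in \emph{every} way two $C_4$'s can be glued, so that their removal really contributes two extra whisker-components, and that the left and right remnants are both non-empty. For interior squares this is immediate from the tight structure dictated by Remark \ref{rmk:aibi}, while for a square at an end of the chain it requires the auxiliary argument handling the pendant degree-$2$ vertex indicated above; the case $i+1=r$ is symmetric. A final routine verification is that $T=\{u_i,a,b\}$ is indeed a cutset, i.e.\ that removing any single one of $u_i,a,b$ strictly decreases the number of components, which holds because in each case $w_i$ or the gluing edge reconnects the two sides.
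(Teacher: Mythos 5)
Your proposal is correct and follows essentially the same route as the paper: the paper also argues by contradiction and exhibits the cutset $T=\{w_{i-1},u_i,w_{i+1}\}$ (which is exactly your $N_B(w_i)\setminus\{w_i\}=\{u_i,a,b\}$ under the labelling of Remark \ref{rmk:aibi}), whose removal yields the same five components and contradicts unmixedness. Your extra care about the boundary squares (showing the fourth vertex of an end $C_4$ is a cutpoint via the two-element cutset isolating the degree-$2$ tip) is a detail the paper leaves implicit, but it does not change the argument.
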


\begin{proof}
By contradiction, suppose that $D_i$ and $D_{i+1}$ are both $C_4$. By Lemma \ref{lem:cutchain}, $w_{i-1}$, $w_i$ , $w_{i+1}$ are all cutpoints  while $u_{i-1},u_i,u_{i+1}$ are not cutpoints. We can see that $T=\{w_{i-1},u_i,w_{i+1}\}\in \CC(\overbar{B})$ and $c(T)=5$. Contradiction.
\end{proof}

\begin{lemma}\label{prop:center}
Let $\overbar{B}$ be an accessible graph such that $B = \bigcup_{i=1}^r D_i$ is a chain of cycles. Let $v\in V(B)$ with $\deg(v)\geq 5$ or $\deg(v)\geq 4$ if $v$ is a vertex of a $C_4$. Then $v$ is a cutpoint.
\end{lemma}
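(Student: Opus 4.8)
The plan is to argue by contradiction, exploiting the accessibility of $\overbar{B}$ together with the structural constraints on a chain of cycles established in the preceding lemmas. Suppose $v \in V(B)$ satisfies the degree hypothesis but is \emph{not} a cutpoint of $\overbar{B}$. By Lemma \ref{lem:cutchain} and the partition in Remark \ref{rmk:WU}, the vertices of $B$ split as $V(B) = W \sqcup U$, where $W$ consists of the cutpoints $w_i$ of $\overbar{B}$ and $U = V(B)\setminus W$ consists of the non-cutpoints $u_i$, and both induced subgraphs on $W$ and on $U$ are paths. Since $v$ is not a cutpoint, $v \in U$, so $v = u_i$ for some index $i$. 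The goal is to produce a cutset $T$ with $c_{\overbar{B}}(T)$ strictly larger than $|T|+1$, contradicting the unmixedness of $J_{\overbar{B}}$ (equivalently, to find a cutset none of whose proper subsets is a cutset but which nonetheless must have a cutpoint by accessibility, forcing an extra whisker and too many components).

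First I would count how many cycles $D_j$ contain $v=u_i$ as a vertex. In a chain of cycles, consecutive cycles overlap in a single edge (Lemma \ref{lem:C3C4}), so a non-pivotal-path vertex $u_i$ typically lies in at most two consecutive cycles $D_i, D_{i+1}$. The degree hypothesis $\deg(v) \geq 5$, or $\deg(v)\geq 4$ when $v$ is in a $C_4$, forces $v$ to have many neighbours inside $B$; I would translate this degree bound, via the chain structure and the labelling of Remark \ref{rmk:aibi}, into the statement that $v$ is adjacent to several of the cutpoints $w_{i-1}, w_i, w_{i+1}$ and/or to several $U$-vertices. The key point is that each cutpoint $w_j$ carries a whisker $f_j$ in $\overbar{B}$, and whenever such a $w_j$ is adjacent to $v$ but we remove a separating set around $v$, the whisker $f_j$ and the bulk of the block on either side split off as distinct connected components.

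The central step is then to build the offending cutset explicitly. I would take $T$ to be the set of $W$-neighbours of $v$ (the cutpoints adjacent to $v$) and argue that $T$ separates $\overbar{B}$ into: the singleton whiskers $\{f_j\}$ for each $w_j \in T$, the vertex $v$ itself (or the short $U$-segment containing it), and the remaining part of the block beyond $v$. Using that $v$ has degree at least $5$ (or $4$ in the $C_4$ case), the high connectivity forces enough distinct $w_j$'s adjacent to $v$ that the count $c_{\overbar{B}}(T)$ exceeds $|T|+1$, violating unmixedness of $J_{\overbar{B}}$; alternatively, if the separating set is itself a cutset with no cutpoint, accessibility is contradicted directly. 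The main obstacle I anticipate is the bookkeeping: precisely translating the degree bound into the number of adjacent cutpoints and ensuring no two whiskers or components are inadvertently merged, particularly distinguishing the $\deg \geq 5$ case from the $C_4$ special case where $\deg \geq 4$ already suffices (because in a $C_4$ the two $U$-vertices are nonadjacent, so a degree-$4$ vertex there already has its excess incident to cutpoints). I would handle the $C_4$ case separately, invoking Lemma \ref{lem:C4nextC3} to control which neighbouring cycle is a triangle and hence to pin down exactly which $w_j$ are forced to be cutpoints with whiskers.
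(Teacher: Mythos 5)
Your overall strategy (contradiction via a cutset that violates unmixedness, using accessibility to force whiskers on the neighbours of $v$) is the right one, and it is the same strategy the paper uses. But the central construction in your proposal does not work: if you take $T$ to be the set of $W$-neighbours of $v$, then removing $T$ does \emph{not} isolate $v$ or its $U$-segment. Since the induced subgraph on $U$ is a path, $v$ keeps its $U$-neighbours, and through them (and through the cycles $D_j$ far from $v$) the two remaining arcs of the block and the $U$-path all stay joined in a single connected component. The components of $\overbar{B}\setminus T$ are therefore just the whiskers $\{f_j\}$, one for each $w_j\in T$, plus one large component, i.e.\ $c_{\overbar{B}}(T)=|T|+1$ exactly. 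This is consistent with unmixedness, so no contradiction arises; and your fallback (``a cutset with no cutpoint contradicts accessibility'') does not apply either, since your $T$ consists entirely of cutpoints.

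The missing idea is to put $v$ \emph{itself} into the cutset. The paper first uses the degree hypothesis to produce three neighbours $v_1,v_2,v_3$ of $v$ with $\{v_1,v_2\},\{v_2,v_3\}\in E(B)$ and $\{v,v_i\}\in\CC(B)$ for $i=1,2,3$; accessibility forces each two-element cutset $\{v,v_i\}$ to contain exactly one cutpoint, so if $v$ is not a cutpoint then all three $v_i$ are cutpoints carrying whiskers. Then $T=\{v,v_1,v_3\}$ is a cutset of $\overbar{B}$ with five components: the two whiskers $\{f_1\}$, $\{f_3\}$, the piece containing $v_2$, and the two arcs of the block on either side of $v_1$ and $v_3$ (these cannot rejoin because $v\in T$ blocks the only remaining route). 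Since $|T|+1=4<5$, unmixedness fails. It is precisely the removal of $v$ that splits off the middle neighbour $v_2$ and separates the two sides of the chain; without $v$ in the cutset the count never exceeds $|T|+1$. Your bookkeeping worry was well placed, but it is not merely bookkeeping --- the choice of cutset itself must change.
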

\begin{proof}
By hypothesis, we can identify $T_i=\{v,v_i\}\in \CC(B)$ for $i=1,2,3$, with $\{v_1,v_2\}$,$\{v_2,v_3\}\in E(B)$. Since $\overbar{B}$ is accessible, we obtain that each $T_i$ contains exactly a cutpoint. By contradiction, assume that $v$ is not a cutpoint. The latter implies that $v_1,v_2$ and $v_3$ belong to $W$, namely they are cutpoints in $\overbar{B}$. We observe that $T=\{v,v_1,v_3\}\in \CC(\overbar{B})$, but $c(T)=5$. Contradiction.
\end{proof}

\begin{remark}\label{rem:Gvw}
Let $G$ be a graph and let $v,w \in V(G)$ with $v \neq w$. Then $(G\setminus \{v\})_{w} = G_w \setminus \{v\}$. Clearly $V((G\setminus \{v\})_{w}) =V( G_w \setminus v)=V(G\setminus \{v\})$. We have:
\[
E(G_w\setminus \{v\})= (E(G) \cup \{\{x,y\} \ | \ x,y \in N_{G}(w)\}) \setminus \{\{v,u\} \ | \ u  \in N_{G_w}(v)\}.
\]
Moreover, we observe that $N_{G_w}(v)$ is either equal to $N_G(v)$ if $\{v,w\} \notin E(G)$ or to $N_{G}(v) \cup N_{G\setminus \{v\}}(w)$ if $\{v,w\} \in E(G)$, that is 
\begin{align*}
E(G_w\setminus \{v\}) &=(E(G) \setminus \{\{v,u\} \ | \ u  \in N_{G}(v)\})  \cup \{\{x,y\} \ | \ x,y \in N_{G\setminus \{v\}}(w)\} \\ &=E((G\setminus \{v\})_w).   
\end{align*}

\end{remark}

\begin{lemma}\label{lem:strong_Gminv}
Let $G$ be a graph such that $J_G$ is unmixed and let $v \in V(G)$ be a free vertex of $G$. If $J_{G \setminus \{v\}}$ is strongly unmixed, then $J_G$ is strongly unmixed. 
\end{lemma}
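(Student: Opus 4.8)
The plan is to argue by well-founded induction on the lexicographically ordered pair
$\phi(G)=\left(|V(G)|,\ \binom{|V(G)|}{2}-|E(G)|\right)$,
whose second entry counts the non-edges of $G$. Since $v$ is free, its neighbourhood $N_G(v)$ is a clique; in particular $v$ is simplicial and hence is never a cutpoint, so any cutpoint produced below is automatically distinct from $v$. As $J_{G\setminus\{v\}}$ is strongly unmixed, exactly one of the two defining alternatives holds for $G\setminus\{v\}$, and I would split into the corresponding two cases.

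First I would treat the case where every connected component of $G\setminus\{v\}$ is complete. Freeness forces $N_G(v)$ to lie inside a single complete component $K$ (if it met two distinct components, $N_G(v)$ would not be a clique). If $N_G(v)\subsetneq K$, then $T=N_G(v)$ is a cutset of $G$ separating $v$ from $K\setminus N_G(v)$, and a direct comparison of the dimensions of $P_T(G)$ and $P_\emptyset(G)$ produces a minimal prime of smaller dimension, contradicting the unmixedness of $J_G$. Hence $N_G(v)=K$, so $K\cup\{v\}$ is again complete, every component of $G$ is complete, and $J_G$ is strongly unmixed directly from the definition.

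In the remaining case $J_{G\setminus\{v\}}$ is unmixed and there is a cutpoint $w$ of $G\setminus\{v\}$ for which $J_{(G\setminus\{v\})\setminus\{w\}}$, $J_{(G\setminus\{v\})_w}$ and $J_{(G\setminus\{v\})_w\setminus\{w\}}$ are strongly unmixed; I would use this same $w$ as the cutpoint witnessing strong unmixedness of $G$. The key points to check are: (i) $w$ is a cutpoint of $G$, because reattaching the simplicial vertex $v$ to the unique component of $(G\setminus\{v\})\setminus\{w\}$ that meets $N_G(v)\setminus\{w\}$ cannot lower the number of components; (ii) $v$ stays free in each of $G\setminus\{w\}$, $G_w$ and $G_w\setminus\{w\}$, which is immediate for the two deletions and, for $G_w$, follows by verifying that $N_{G_w}(v)$ is a clique (using that $w\in N_G(v)$ forces $w\in C$ and $C\setminus\{v\}\subseteq N_G(w)$); and (iii) deleting $v$ from these three graphs recovers precisely $(G\setminus\{v\})\setminus\{w\}$, $(G\setminus\{v\})_w$ and $(G\setminus\{v\})_w\setminus\{w\}$, where the identity $G_w\setminus\{v\}=(G\setminus\{v\})_w$ is exactly Remark \ref{rem:Gvw}. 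Thus in all three cases the graph obtained by removing $v$ is strongly unmixed. Granting that $J_G$ unmixed together with $w$ a cutpoint makes $J_{G\setminus\{w\}}$, $J_{G_w}$ and $J_{G_w\setminus\{w\}}$ unmixed, the induction hypothesis applies to each of these graphs and yields that they are strongly unmixed; by the definition of strong unmixedness (with cutpoint $w$, and using that $J_G$ is unmixed by hypothesis) this gives that $J_G$ is strongly unmixed.

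The step I expect to be the main obstacle is the well-foundedness of the recursion, because $G_w$ has the \emph{same} number of vertices as $G$, so a naive induction on $|V(G)|$ fails. This is exactly what the lexicographic measure $\phi$ is designed to handle: for $G\setminus\{w\}$ and $G_w\setminus\{w\}$ the vertex count drops strictly, while for $G_w$ the crucial observation is that a cutpoint $w$ has non-clique neighbourhood (its neighbours lie in different components of $G\setminus\{w\}$), so $G_w\neq G$ and $G_w$ has strictly fewer non-edges; hence $\phi$ decreases strictly in every branch. The remaining routine work is the combinatorial bookkeeping of items (i)--(iii) and the preservation of unmixedness under the operations $(-)\setminus\{w\}$ and $(-)_w$ for a cutpoint $w$.
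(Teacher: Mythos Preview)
Your overall strategy coincides with the paper's: both take the cutpoint $w$ witnessing strong unmixedness of $G\setminus\{v\}$, verify it remains a cutpoint of $G$, invoke Remark~\ref{rem:Gvw} to identify the three derived graphs of $G\setminus\{v\}$ with $(G\setminus\{w\})\setminus\{v\}$, $G_w\setminus\{v\}$, $(G_w\setminus\{w\})\setminus\{v\}$, and then recurse. The paper inducts on the number of cutpoints of $G\setminus\{v\}$; your lexicographic measure $\phi(G)=\bigl(|V(G)|,\binom{|V(G)|}{2}-|E(G)|\bigr)$ is a clean alternative that makes termination on the $G_w$ branch transparent (a cutpoint has non-clique neighbourhood, so $G_w$ acquires at least one new edge).

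There is, however, a real slip in your base case. When $|N_G(v)|=1$, say $N_G(v)=\{u\}\subsetneq K$ with $|K|\ge 2$, the set $T=\{u\}$ is indeed a cutset, but $c_G(T)-|T|=c_G(\emptyset)$, so $P_T(G)$ and $P_\emptyset(G)$ have the \emph{same} dimension and unmixedness is not violated. Your conclusion ``every component of $G$ is complete'' is then false: $G$ is a complete graph with a pendant vertex (plus possibly further complete components). The paper handles exactly this possibility explicitly in its base step (``$G$ is a complete graph with or without a whisker'') and verifies strong unmixedness directly via the cutpoint $u$. Your dimension comparison only forces $N_G(v)=K$ when $|N_G(v)|\ge 2$; the pendant case must be treated separately.
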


\begin{proof}
We proceed by induction on the number $r$ of cutpoints of $G \setminus \{v\}$. \\
If $r=0$, then $G \setminus \{v\}$ is a complete graph. The latter implies that $G$ is a complete graph with or without a whisker, and it is immediate to see that $J_G$ is strongly unmixed. \\
We assume $r > 0 $ and the thesis true for any graph $G\setminus \{v\}$ with at most $r-1$ cutpoints. Let $\{w\} \in \CC(G\setminus \{v\})$ such that the binomial edge ideals of 
$(G\setminus \{v\}) \setminus \{w\}, (G\setminus \{v\} )_w$, and $(G\setminus \{v\} )_w \setminus \{w\}$
are strongly unmixed. We observe that $w$ is also a cutpoint for $G$, otherwise $\{v,w\}$ is a cutset for $G$ contradicting the fact that $v$ is a free vertex. From Remark \ref{rem:Gvw}, it follows that $(G\setminus \{v\})_w= G_w \setminus \{v\}$ and $(G\setminus \{v\})_w \setminus \{w\} = G_w \setminus \{v,w\}$. The latter graphs and $G\setminus \{v,w\}$ are three graphs having a number of cutpoints less than or equal to $r-1$,  hence by the inductive hypothesis the assertion follows. 
\end{proof}

\begin{lemma}\label{lem:strong_unmixed_decomposable}
Let $G_1$ and $G_2$ be two graphs and let $G=G_1\cup G_2$ be such that $V(G_1)\cap V(G_2)=\{v\}$, with $v$ free vertex of $G_1$ and $G_2$. The following conditions are equivalent:
\begin{enumerate}
    \item $J_{G_1}$ and $J_{G_2}$ are strongly unmixed (resp. $G_1$ and $G_2$ are accessible);
    \item $J_{G}$ is strongly unmixed (resp. $G$ is accessible).
\end{enumerate}
\end{lemma}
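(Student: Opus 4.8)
The plan is to derive both equivalences (accessibility and strong unmixedness) from a single combinatorial core: a precise description of $\mathcal{C}(G)$ in terms of $\mathcal{C}(G_1)$ and $\mathcal{C}(G_2)$, obtained from the fact that $v$ is simplicial. After that, accessibility and unmixedness are essentially bookkeeping, while strong unmixedness requires an induction against the recursive definition.

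First I would record the cutset correspondence. Since $v$ is a free vertex of $G_i$ it is simplicial, hence simplicial in every induced subgraph; so $v$ lies in no cutset of $G_i$ and is never a cutpoint of $G_i$, whereas it is a cutpoint of $G$. For $T\in\mathcal{C}(G)$ write $T_i=T\cap V(G_i)$. Using that deleting a simplicial vertex never disconnects, I would prove the counting identities and the characterization simultaneously: if $v\notin T$ then $c_G(T)=c_{G_1}(T_1)+c_{G_2}(T_2)-1$, and $T\in\mathcal{C}(G)$ if and only if $T_1\in\mathcal{C}(G_1)$ and $T_2\in\mathcal{C}(G_2)$ (the cutset test is checked element by element, and each element only interacts with its own side); if $v\in T$, then the test forced by $v$ is exactly that $v$ retains a neighbour on each side after removing $T\setminus\{v\}$, and in that case $T\setminus\{v\}=T_1'\sqcup T_2'$ with $T_i'=T_i\setminus\{v\}\in\mathcal{C}(G_i)$ and $c_G(T)=c_{G_1}(T_1')+c_{G_2}(T_2')$.

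Next I would deduce unmixedness and accessibility. Recalling that $J_H$ is unmixed iff $c_H(T)=|T|+1$ for every $T\in\mathcal{C}(H)$, the identities above give at once that $J_G$ is unmixed iff $J_{G_1}$ and $J_{G_2}$ are (each side contributes $c_{G_i}(T_i)-|T_i|=1$, and the offset $-1$ accounts for the single common vertex). For accessibility, if $G_1,G_2$ are accessible then any nonempty $T\in\mathcal{C}(G)$ with $v\notin T$ is made accessible by removing an accessibility witness on its nonempty $G_1$- or $G_2$-part, while any $T$ with $v\in T$ is made accessible by removing $v$ itself, since $T\setminus\{v\}=T_1'\sqcup T_2'$ is again a cutset by the correspondence (and for $T=\{v\}$ one lands on $\emptyset\in\mathcal{C}(G)$). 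Conversely, if $G$ is accessible, then for nonempty $T_1\in\mathcal{C}(G_1)$ we have $T_1\sqcup\emptyset\in\mathcal{C}(G)$; a witness $t$ with $T_1\setminus\{t\}\in\mathcal{C}(G)$ does not involve $v$, so $T_1\setminus\{t\}\in\mathcal{C}(G_1)$, and $G_1$ (symmetrically $G_2$) is accessible.

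For strong unmixedness I would induct on the pair $\bigl(|V(G)|,\ \text{number of non-edges of }G\bigr)$ ordered lexicographically, using throughout that strong unmixedness of a graph is equivalent to that of each of its connected components (immediate from the recursive definition, and needed because $G_1\setminus w$ may be disconnected) together with the unmixedness equivalence just established. The two structural points are: for a cutpoint $w\in V(G_1)\setminus\{v\}$ of $G$ one has $N_G(w)=N_{G_1}(w)\subseteq V(G_1)$, so by Remark \ref{rem:Gvw} the graphs $G\setminus w=(G_1\setminus w)\cup G_2$, $G_w=(G_1)_w\cup G_2$ and $G_w\setminus w=((G_1)_w\setminus w)\cup G_2$ are all still glued along $v$; and $v$ remains simplicial in $(G_1)_w$ (a short verification, splitting on whether $w\sim v$, using that $N_{G_1}(w)$ becomes a clique that contains $v$ precisely when $w\sim v$), so these are genuine free-vertex decompositions of strictly smaller measure — $G\setminus w$ and $G_w\setminus w$ have fewer vertices, and $G_w$ has the same vertices but strictly fewer non-edges since $N_{G_1}(w)$ is not a clique ($w$ being a cutpoint of $G_1$). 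For the forward implication I take the strong-unmixedness witness $w$ of $G_1$ (which exists and satisfies $w\neq v$ whenever $G_1$ is not complete, as $v$ is not a cutpoint of $G_1$; the case where both $G_i$ are complete is handled directly by the witness $v$), check it is a cutpoint of $G$, and apply the inductive hypothesis to the three graphs above. For the converse I split on the witness $w$ of $G$: if $w=v$ then $G\setminus v=(G_1\setminus v)\sqcup(G_2\setminus v)$ is strongly unmixed, forcing each $J_{G_i\setminus v}$ strongly unmixed, and Lemma \ref{lem:strong_Gminv} upgrades this to $J_{G_i}$ strongly unmixed; if $w\in V(G_i)\setminus\{v\}$ the inductive hypothesis on the three decomposed graphs yields exactly the witness required to conclude $J_{G_i}$ strongly unmixed.

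The main obstacle is the operation $G\mapsto G_w$, which keeps the vertex count fixed and therefore defeats a naive induction on $|V(G)|$; this is why the refined lexicographic measure is needed, and why one must check that $G_w$ still decomposes at the free vertex $v$, i.e. that $v$ stays simplicial after the clique-completion of $N_{G_1}(w)$. That simplicial check and the careful treatment of cutsets containing $v$ in the correspondence step are precisely where the free-vertex hypothesis is indispensable.
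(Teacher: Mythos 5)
Your proposal is correct, and it reaches the same conclusion by a partly different route, so a comparison is in order. For the accessibility statement the paper simply cites \cite[Proposition 2.6]{RR} and \cite[Lemma 2.3]{RR}; you reprove the underlying cutset correspondence from scratch (including the component-count identities and the characterisation of cutsets containing $v$), which is the same mathematics made self-contained. For $(2)\Rightarrow(1)$ of strong unmixedness your argument is essentially the paper's: split on whether the witness cutpoint $w$ equals $v$, use Lemma \ref{lem:strong_Gminv} when $w=v$, and recurse on the three graphs $G\setminus\{w\}$, $G_w$, $G_w\setminus\{w\}$ otherwise; the difference is the induction measure --- the paper inducts on the number of cutpoints (which does decrease for $G_w$, since clique-completion at $w$ can only destroy cutpoints), while you use the lexicographic pair $\bigl(|V(G)|,\#\text{non-edges}\bigr)$, and you correctly identify that some refinement beyond vertex count is forced by the operation $G\mapsto G_w$. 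The genuine divergence is in $(1)\Rightarrow(2)$: the paper explicitly concatenates the witness sequences of $G_1$ and $G_2$ into a single sequence $v_1,\dots,v_r,u_1,\dots,u_s$ (appending $v$ if needed) and argues that unmixedness propagates through the decomposition at every stage, whereas you run the same descending induction as in the converse, taking the witness of the non-complete factor and applying the inductive hypothesis to the three derived gluings. Your version has the advantage of explicitly verifying all three graphs required by the recursive definition at each step (including that $v$ stays simplicial in $(G_1)_w$, a point you rightly single out), where the paper's sequence argument only tracks the successive deletions; the paper's version is more constructive, exhibiting the actual cutpoint sequence. Both are valid proofs.
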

\begin{proof}
With respect to accessibility the two conditions are equivalent by \cite[Proposition 2.6]{RR} and \cite[Lemma 2.3]{RR}. Now we focus on strong unmixedness.

(1)$\Rightarrow$(2). By \cite[Proposition 2.6]{RR}, $J_G$ is unmixed. Let $\{v_1,\ldots, v_r\}\subset V(G_1)$ such that $v_i$ is a cutpoint of $H_i=G_1\setminus\{v_1,\ldots,v_{i-1}\}$ and $J_{H_i}$ is strongly unmixed. 
Let $\{u_1,\ldots, u_s\}\subset V(G_2)$ be the set satisfying the same property for $G_2$. 

Since $v$ is a free vertex, it is neither a cutpoint of $G_1$ nor a cutpoint of $G_2$. Moreover, for any $a\in G_i$, $v$ remains a free vertex of $G_i\setminus \{a\}$.

We claim that $G$ is strongly unmixed with respect to the sequence of vertices \[v_1,\ldots,v_r,u_1,\ldots,u_s,\]
adding $v$ if necessary.

By \cite[Proposition 2.6]{RR},  $G\setminus\{v_1\}$ is decomposable in $H_1$ and $G_2$ whose ideals are both unmixed. Hence $J_{H_1\cup G_2}$ is unmixed, as well. By the same argument, we can remove the remaining vertices  $\{v_2,\ldots,v_r,u_1,\ldots,u_s\}$ obtaining unmixed ideals. Now either all the components are complete graphs or there is only one containing $v$ that is decomposable into 2 complete graphs. In this case, we add $v$ to the sequence of cutpoints.

(2)$\Rightarrow$(1). We proceed by induction on the number $r$ of cutpoints of $G$. We observe that $r\geq 1$ since $G$ is decomposable, hence we take $r=1$ as base case. In this case, $v$ is the unique cutpoint and $G_1\setminus \{v\}$ and $G_2 \setminus \{v\}$ are both complete graphs, that is $G_1$ and $G_2$ are complete graphs and the thesis follows. 
We assume $r > 1$ and that the thesis holds true for any number of cutpoints less than or equal to $r-1$. 
Let $w$ be a cutpoint of $G$. If $w =v$, then we obtain that $J_{G_1 \setminus \{v\}}$ and $J_{G_2\setminus \{v\}}$ are strongly unmixed, and since $v$ is a free vertex of $G_1$ and $G_2$, then the assertion follows from Lemma \ref{lem:strong_Gminv}. If $w \neq v$, we assume without loss of generality that $w \in V(G_1 \setminus \{v\})$. We obtain that $G \setminus \{w\}$ has two connected components, one $H=H_1 \cup G_2$ with $V(H_1)\cap V(G_2)=\{v\}$ and another component $H_2$. From the strong unmixedness of $J_{G \setminus \{w\}}$ and from the inductive hypothesis, we obtain that $J_{H_1}$, $J_{G_2}$ and $J_{H_2}$ are strongly unmixed and since $G_1 \setminus \{w\}= H_1 \cup H_2$, then $J_{G_1 \setminus \{w\}}$ is also strongly unmixed. By similar arguments, one can prove that also $J_{(G_1)_w}$ and $J_{(G_1)_w \setminus \{w\}}$ are strongly unmixed, that is $J_{G_1}$ is strongly unmixed.

\end{proof}

\begin{setup}\label{setup: chain}
Let $\overbar{B}$ be a block with whiskers, where $B=\bigcup_{i=1}^r D_i$ is a chain of cycles, satisfying the following properties:
\begin{enumerate}
 \item each $D_i\in \{C_3, C_4\}$;
 \item if $D_i=C_4$ then $D_{i+1}=C_3$;
 \item $E(D_i)\cap E(D_{i+1})=\{\{w_i,u_i\}\}$, where $w_i$ is a cutpoint and $u_i$ is not a cutpoint;
 \item $\{w_i,w_{i+1}\}\in E(D_{i+i})$ (resp. $\{u_i,u_{i+1}\}\in E(D_{i+1})$) or $w_i=w_{i+1}$ (resp. $u_i=u_{i+1}$);
 \item if $D_1=C_4$ with $V(D_1)=\{w_0,w_1,u_0,u_1\}$ with $\{w_0,w_1\},\{u_0,u_1\}\in E(D_1)$ then $w_0$ and $w_1$  are cutpoints, whereas $u_0$ and $u_1$ are not cutpoints;
 \item if $D_r=C_4$ with $V(D_r)=\{w_r,w_{r+1},u_r,u_{r+1}\}$ with $\{w_r,w_{r+1}\},\{u_r,u_{r+1}\}\in E(D_r)$ then $w_r$ and $w_{r+1}$ are cutpoints, whereas $u_r$ and $u_{r+1}$ are not cutpoints;
 \item if $v\in V(B)$ with $\deg(v)\geq 5$ or $\deg(v)\geq 4$ with $v$ a vertex of a $C_4$ then $v$ is a cutpoint.
 
 \end{enumerate}

\end{setup}

In Figure \ref{fig:chainsetup}, an example of a graph $\overbar{B}$ satisfying Setup \ref{setup: chain} is displayed.

\begin{figure}[H]
\centering
 \resizebox{!}{0.2\textwidth}{
\begin{tikzpicture}

\filldraw (0, 0) circle (\rad);
\filldraw (2, 0) circle (\rad);
\filldraw (3, 0) circle (\rad);
\filldraw (4, 0) circle (\rad);
\filldraw (6, 0) circle (\rad);
\filldraw (8, 0) circle (\rad);
\filldraw (10, 0) circle (\rad);
\filldraw (11, 0) circle (\rad);
\filldraw (12, 0) circle (\rad);
\filldraw (13, 0) circle (\rad);
\filldraw (14, 0) circle (\rad);

\filldraw (0, 2) circle (\rad);
\filldraw (3, 2) circle (\rad);
\filldraw (7, 2) circle (\rad);
\filldraw (9, 2) circle (\rad);
\filldraw (12, 2) circle (\rad);

\filldraw (0, 3.5) circle (\rad);
\filldraw (3, 3.5) circle (\rad);
\filldraw (7, 3.5) circle (\rad);
\filldraw (9, 3.5) circle (\rad);
\filldraw (12, 3.5) circle (\rad);

\draw (0,0)--(14,0)--(12,2)--(0,2)--cycle;

\draw (2,0)--(3,2);
\draw (3,0)--(3,2);
\draw (4,0)--(3,2);
\draw (6,0)--(7,2);
\draw (7,2)--(8,0);
\draw (8,0)--(9,2);
\draw (9,2)--(10,0);
\draw (11,0)--(12,2);
\draw (12,0)--(12,2);
\draw (13,0)--(12,2);

\draw (0, 2)--(0,3.5);
\draw (3, 2)--(3,3.5);
\draw (7, 2)--(7,3.5);
\draw (9, 2)--(9,3.5);
\draw (12, 2)--(12,3.5);

\end{tikzpicture}}
\caption{A graph $\overbar{B}$ satisfying Setup \ref{setup: chain}}\label{fig:chainsetup}
\end{figure}

\begin{lemma}\label{lem:compU}
Let $\overbar{B}$ be a graph satisfying Setup \ref{setup: chain}, and let $T\in \CC(\overbar{B})$. Then for all $u\in U\cap T$ there exists $w\in W\cap T$ such that $\{u,w\}\in \CC(\overbar{B})$.
\end{lemma}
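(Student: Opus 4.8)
The plan is to reduce the statement to a local analysis of $u$ inside $T$ and then recast it as the existence of a small component. First I would record the two standing reductions. Since every whisker vertex has degree one, it can never be essential in a cutset, so $T\subseteq V(B)=W\sqcup U$; and by Remark \ref{rmk:WU} (together with Lemma \ref{lem:cutchain}) the set $W$ consists exactly of the cutpoints of $\overbar{B}$, each carrying a whisker, while $U$ consists of the non-cutpoints, and both $W$ and $U$ induce paths. The key starting observation is the meaning of essentiality: since $T\in\CC(\overbar{B})$, for $u\in U\cap T$ we have $c(T\setminus\{u\})<c(T)$, which says precisely that $u$ is a cut vertex of $\overbar{B}\setminus(T\setminus\{u\})$, equivalently that $u$ is adjacent to at least two distinct connected components of $\overbar{B}\setminus T$.

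I would then reformulate the goal through component boundaries. For a connected component $A$ of $\overbar{B}\setminus T$, write $N(A)$ for the set of vertices outside $A$ that are adjacent to $A$; note $N(A)\subseteq T$. It suffices to exhibit a component $A$ adjacent to $u$ with $N(A)=\{u,w\}$ for some $w\in W\cap T$. Indeed, if such $A$ exists then $\{u,w\}$ isolates $A$, and $\{u,w\}$ is genuinely a cutset: removing $u$ alone does not split off $A$ (it stays attached through $w$), removing $w$ alone does not split off $A$ (it stays attached through $u$), and since $w$ is a cutpoint we have $c(\{w\})\geq 2$, whence $c(\{u,w\})>c(\{w\})\geq 2>1=c(\{u\})$, so both vertices are essential.

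The core of the argument is to produce such an $A$. Because $u=u_i$ is essential, it is adjacent to at least two components of $\overbar{B}\setminus T$, and using the chain decomposition $B=\bigcup_{i=1}^r D_i$ I would take the \emph{outermost} such component along the chain, i.e. the one on a side of $u$ beyond which $T$ has no further vertex. The path structure of $U$ and of $W$ forces this $A$ to attach to $T$ only through $u$ itself and through a single cutpoint $w$ closing the cell at $u$; moreover this $w$ must lie in $T$, for otherwise $A$ would be rejoined to the rest of $\overbar{B}$ through the intact top path at $w$, contradicting that $A$ is a separate component of $\overbar{B}\setminus T$. This yields $N(A)=\{u,w\}$ with $w\in W\cap T$, completing the reduction of the previous paragraph.

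I expect the real obstacle to be exactly the claim that $N(A)$ has only two elements, i.e. the identification of the single cutpoint $w$ and the proof that $w\in T$. This requires a cell-by-cell examination of the neighbourhood of $u=u_i$: the local cell can be a $C_4$, a $C_3$ with collapsed top ($w_{i-1}=w_i$), or a $C_3$ with collapsed bottom ($u_{i-1}=u_i$), and one must also treat the two end configurations of Setup \ref{setup: chain}, conditions (5) and (6), where the correct partner $w$ is a ``diagonal'' cutpoint rather than the rung mate $w_i$. What keeps these neighbourhoods simple enough for the boundary to collapse to $\{u,w\}$ are precisely the hypotheses forbidding two consecutive $C_4$'s (Lemma \ref{lem:C4nextC3}, equivalently Setup \ref{setup: chain}(2)) and forcing high-degree vertices to be cutpoints (Setup \ref{setup: chain}(7)); these exclude the near-ladder patterns in which $u$ could appear essential without a two-element boundary, and thereby force the partner cutpoint into $T$.
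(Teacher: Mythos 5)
Your reduction in the second paragraph (if some component $A$ of $\overbar{B}\setminus T$ adjacent to $u$ has boundary exactly $\{u,w\}$ with $w\in W$, then $\{u,w\}\in\CC(\overbar{B})$ and $w\in T$) is correct, but the existence claim it rests on is false, so the core of your argument cannot be carried out. Counterexample: take a long chain of triangles satisfying Setup \ref{setup: chain}, with top path $W$ (each vertex whiskered) and bottom path $U$ as in Remark \ref{rmk:WU}, and let $u\in U$ be an interior vertex of degree $4$, with neighbours $p,r\in U$ and two consecutive cutpoints $s,t\in W$ (triangles $\{p,u,s\}$, $\{u,s,t\}$, $\{u,r,t\}$). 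Put $T=\{u,s,w_a,w_b\}$, where $w_a\in W$ lies far to the left of $u$ and $w_b\in W$ far to the right. One checks that $T\in\CC(\overbar{B})$ with $c(T)=5$, and that exactly two components of $\overbar{B}\setminus T$ are adjacent to $u$: the left one, which wraps around $w_a$ along the $U$-path and therefore has boundary $\{u,s,w_a\}$, and the right one, which contains $t\sim s$ and therefore has boundary $\{u,s,w_b\}$. No component adjacent to $u$ has a two-element boundary, even though the conclusion of the lemma holds with $w=s$. The ``outermost component'' heuristic cannot be repaired: whenever $T$ has vertices beyond the cell of $u$ on both sides, every component adjacent to $u$ absorbs them into its boundary. (Independently of this, you explicitly defer the ``cell-by-cell examination'' that would constitute the actual proof, so the argument would be incomplete even where the claim happens to hold.)

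The paper's proof avoids boundaries of components entirely and argues by contradiction: assume that no $w\in W$ with $\{u,w\}\in\CC(\overbar{B})$ belongs to $T$, and show that $u$ is then not essential in $T$, i.e.\ that $H\setminus\{u\}$ is still connected, where $H$ is the component of $\overbar{B}\setminus(T\setminus\{u\})$ containing $u$. The mechanism is a rerouting argument: if a path in $H$ passes through $u$ with neighbours $v_{i-1},v_{i+1}$, then (after replacing a $U$-neighbour of $u$ by an adjacent $W$-vertex if necessary) the segment of the $W$-path between the corresponding $W$-neighbours of $u$ consists of vertices $w$ each satisfying $\{u,w\}\in\CC(\overbar{B})$, hence by the contradiction hypothesis none of them lies in $T$, and they furnish a detour around $u$ inside $H$. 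If you want to salvage your write-up, you should replace the search for a small-boundary component by an argument of this contrapositive type.
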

\begin{proof}
By contradiction, assume that there exists $u \in T\cap U$ such that any vertex $w \in W$ for which $\{u,w\}\in \CC(\overbar{B}) $ does not belong to $T$.
Let $T'=T\setminus \{u\}$. We prove that $c_{\overbar{B}}(T)=c_{\overbar{B}}(T')$. Let $H$ be the connected component of $\overbar{B}\setminus T'$ containing $u$. We prove that $H\setminus u$ is connected. Let $v,v' \in V(H\setminus \{u\})$ and let $\pi: v, v_1, \ldots, v_{\ell}, v'$ be a path in $H$ from $v$ to $v'$. If $u \notin V(\pi)$, then $v$ and $v'$ are connected in $H\setminus \{u\}$ through $\pi$. If $u\in V(\pi)$, then $\pi: v, v_1, \ldots, v_{i-1}, u, v_{i+1}, \ldots, v_{\ell}, v'$.

We claim that there exists a path $v_{i-1}, z_1, \ldots, z_{m}, v_{i+1}$ with $\{u,z_j\}\in \CC(\overbar{B})$ and $z_j \notin T$ for any $j \in \{1,\ldots, m\}$. If $v_{i-1},v_{i+1} \in W$, then $v_{i-1}=w_{j}$, $v_{i+1}=w_{k}$ with $j < k$ as in the Setup \ref{setup: chain}, hence the vertices $w_{j+1},\ldots, w_{k-1}$ make a path between $w_j$ and $w_k$.
Furthermore being $w_j, w_k$ adjacent to $u$, then $\{u,w_{j+1}\}, \ldots, \{u,w_{k-1}\} \in \CC(\overbar{B})$ and in particular $w_{j+1},\ldots, w_{k-1} \notin T$. In this case, the claim follows.\\
Now, we deal with the case $v_{i-1}$ or $v_{i+1} \in U$. Observe that any vertex $u' \in U$ adjacent to $u$ is also adjacent to a vertex $w' \in W$ such that $\{u,w'\}\in \CC(\overbar{B})$. In fact, let $D_k$ be the cycle containing $u$ and $u'$. The vertex $w' \neq u$ adjacent to $u'$ that belongs to $D_k$ is such that $\{u,w'\}$ disconnects $u'$ from the rest of the graph.  That is, if one or both of $v_{i-1},v_{i+1}$ are in $U$, by the previous arguments we find the desired path in $W$. 
In any of the above cases, we find that $H\setminus \{u\}$ is connected, that is $T \notin \CC(\overbar{B})$. Contradiction.
\end{proof}

\begin{corollary}\label{rem:Tminusu}
Let $\overbar{B}$ be a graph satisfying Setup \ref{setup: chain}, and let $T\in \CC(\overbar{B})$. Then for any $u \in U\cap T$ we have $T'=T\setminus \{u\} \in \CC(\overbar{B})$. In particular, $\CC(\overbar{B})$ is an accessible set system.
 \end{corollary}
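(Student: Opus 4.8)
The plan is to deduce the statement from Lemma \ref{lem:compU} by checking directly that every vertex of $T' = T \setminus \{u\}$ stays \emph{essential}, where I call $v \in T'$ essential when $c_{\overbar{B}}(T' \setminus \{v\}) < c_{\overbar{B}}(T')$, equivalently when $v$ is adjacent to at least two connected components of $\overbar{B} \setminus T'$. First I would invoke Lemma \ref{lem:compU} to fix a cutpoint $w \in W \cap T$ with $\{u,w\} \in \CC(\overbar{B})$; in the notation of that proof $u$ together with $w$ cuts off a $U$-neighbour of $u$, and this is what localizes the effect of reinserting $u$ into the graph. I would then split the verification according to the partition $V(B) = W \sqcup U$ of Remark \ref{rmk:WU}.

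For $v \in W \cap T'$ the argument is immediate. Since $v$ is a cutpoint of $\overbar{B}$ it carries a whisker $f_v$, and as $v \in T'$ the tip $f_v \notin T'$ is isolated in $\overbar{B} \setminus T'$. Because $T$ is a cutset, $v$ is essential in $T$, so $v$ is adjacent to at least two components of $\overbar{B} \setminus T$; hence $v$ has a neighbour $x \notin T$, and $x \notin T'$ as $T' \subseteq T$. Thus $v$ is adjacent both to the component of $x$ and to the isolated component $\{f_v\}$, so $v$ is essential in $T'$. The very same whisker computation shows that \emph{any} single vertex may be deleted from a cutset contained in $W$ while remaining a cutset, which is what I will use for the accessibility statement.

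The substantive case is $v \in U \cap T'$, where $v$ has no whisker and I must compare component counts before and after reinserting $u$. Writing $\delta$ for the drop in the number of components caused by putting $u$ back, I would prove the robust statement that $\delta$ is the same whether computed in $\overbar{B} \setminus T$ or in $\overbar{B} \setminus (T \setminus \{v\})$; granting this, $c_{\overbar{B}}(T' \setminus \{v\}) = c_{\overbar{B}}(T \setminus \{v\}) - \delta < c_{\overbar{B}}(T) - \delta = c_{\overbar{B}}(T')$ by essentiality of $v$ in $T$, so $v$ is essential in $T'$. To get the robustness I would use Lemma \ref{lem:compU} applied to $v$, which supplies its own partner cutpoint $w_v \in W \cap T \subseteq T'$ and pins down the piece that $v$ separates, together with the chain structure of Setup \ref{setup: chain} (each $D_i \in \{C_3, C_4\}$, the degree bound $(7)$, and the fact that $U$ and $W$ both induce paths) to bound the neighbourhood of $u$. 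The main obstacle is precisely this independence of $\delta$ from $v$: concretely, that reinserting $u$ cannot bridge two distinct components that are adjacent to $v$. I expect this to reduce to checking the finitely many local configurations of a $C_3$ or $C_4$ incident to both $u$ and $v$, with the cutpoint/non-cutpoint pattern forced by Lemmas \ref{lem:cutchain}, \ref{lem:C4nextC3} and \ref{lem:P_3}, and ruling out an alternative connection created by $u$.

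Finally I would assemble the accessibility claim. Let $T \in \CC(\overbar{B})$ be non-empty. If $T \cap U \neq \emptyset$, choose $u \in T \cap U$ and the first part of the corollary gives $T \setminus \{u\} \in \CC(\overbar{B})$, so $T$ is accessible. If instead $T \subseteq W$, the whisker argument of the second paragraph shows that deleting any vertex of $T$ again yields a cutset, so $T$ is accessible as well. In either case $T$ admits a vertex whose deletion remains in $\CC(\overbar{B})$, whence $\CC(\overbar{B})$ is an accessible set system.
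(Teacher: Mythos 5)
Your skeleton is the paper's: verify that every $v \in T' = T\setminus\{u\}$ still drops the component count, split on the partition $V(B)=W\sqcup U$, handle $W$ via whiskers, handle $U$ via Lemma \ref{lem:compU}, and assemble accessibility by removing a $U$-vertex if one exists and a cutpoint otherwise. The $W$ case and the final accessibility paragraph are complete and match the paper essentially verbatim.

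The gap is in the $U$ case, which you correctly identify as the substantive one but do not close. You reduce it to the claim that the drop $\delta$ in the number of components caused by reinserting $u$ is the same in $\overbar{B}\setminus T$ and in $\overbar{B}\setminus(T\setminus\{v\})$, and then say you \emph{expect} this to follow from checking finitely many local configurations of a $C_3$ or $C_4$ meeting both $u$ and $v$. That invariance statement is the entire content of the case, and it is not established; in particular $u$ and $v$ need not lie in a common cycle $D_i$ at all, so the ``local configuration'' framing does not obviously capture the situation where reinserting $u$ re-routes connectivity far from $v$. The paper avoids the before/after comparison entirely: Lemma \ref{lem:compU} applied to $v$ produces $w_v\in W\cap T$ with $\{v,w_v\}\in\CC(\overbar{B})$, and since $w_v\in W$ while $u\in U$ we get $\{v,w_v\}\subseteq T'$; this two-element cutset sitting inside $T'$ is already the witness that $v$ is essential in $T'$ --- the piece that $\{v,w_v\}$ cuts off (a $U$-neighbour of $v$, as exhibited in the proof of Lemma \ref{lem:compU}) is adjacent to $v$ and stays separated in $\overbar{B}\setminus T'$ because components of $\overbar{B}\setminus T'$ refine those of $\overbar{B}\setminus\{v,w_v\}$. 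You have this ingredient in hand (you name $w_v$ explicitly) but route it through the harder $\delta$-invariance claim instead of using it directly; replacing that detour with the direct argument closes the proof.
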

 \begin{proof}
 Let $a \in T'$. If $a \in U$, then from Lemma \ref{lem:compU} there exists $b \in W\cap T$ such that $\{a,b\} \in \CC(\overbar{B})$. In particular, $b \in T'$ and $c_{\overbar{B}}(T')> c_{\overbar{B}}(T'\setminus \{a\})$. If $a\in W$, namely $a$ is a cutpoint of $\overbar{B}$, then $c_{\overbar{B}}(T')> c_{\overbar{B}}(T'\setminus \{a\})$.\\
 Furthermore, for any non-empty $T \in \CC(\overbar{B})$ if $u \in T\cap U\neq \varnothing$, then $T'=T\setminus \{u\} \in \CC(\overbar{B})$, while if $T\cap U = \varnothing$, then any $w \in T$ is a cutpoint, hence $T\setminus \{w\}\in \CC(\overbar{B})$.
  \end{proof}

\begin{proposition}\label{prop:unmixedB}
Let $\overbar{B}$ be a graph satisfying Setup \ref{setup: chain}. Then $J_{\overbar{B}}$ is unmixed. 
\end{proposition}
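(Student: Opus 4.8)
The plan is to verify the standard combinatorial criterion for unmixedness. By the primary decomposition \eqref{Eq:primarydec} and the height formula $\operatorname{height} P_T(\overbar{B}) = |V(\overbar{B})| + |T| - c_{\overbar{B}}(T)$, the ideal $J_{\overbar{B}}$ is unmixed if and only if $c_{\overbar{B}}(T) = |T| + 1$ for every $T \in \CC(\overbar{B})$. Recall from Remark \ref{rmk:WU} that $V(B) = W \sqcup U$, that $W$ is exactly the set of vertices carrying a whisker, and that the induced subgraphs on $W$ and on $U$ are both paths. Since each whisker is a leaf attached to a vertex of $W$, deleting $T$ from $\overbar{B}$ isolates precisely the $|T \cap W|$ whiskers sitting on the removed cutpoints, while the surviving whiskers remain pendant and do not affect the component count. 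This yields the identity $c_{\overbar{B}}(T) = |T \cap W| + c_B(T)$, so the goal reduces to showing
\[
c_B(T) = |T \cap U| + 1 \qquad \text{for every } T \in \CC(\overbar{B}).
\]

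First I would prove this by induction on $|T \cap U|$. In the base case $T \subseteq W$, the subgraph $B \setminus W$ equals the (connected) $U$-path, and every surviving vertex of $W$ is adjacent to some vertex of $U$ (each $w_i$ meets $u_i$ along the shared edge of two consecutive cycles); hence $B \setminus T$ is connected and $c_B(T) = 1$, as wanted. For the inductive step, pick $u \in T \cap U$ and set $T' = T \setminus \{u\}$. By Corollary \ref{rem:Tminusu} we have $T' \in \CC(\overbar{B})$ with $|T' \cap U| = |T \cap U| - 1$, so by induction $c_B(T') = |T \cap U|$. It then suffices to show that deleting the single vertex $u$ from $B \setminus T'$ raises the number of connected components by exactly one. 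The inequality $c_B(T) > c_B(T')$ is immediate: as $T$ is a cutset, the defining property $c_{\overbar{B}}(T') < c_{\overbar{B}}(T)$ together with $T \cap W = T' \cap W$ and the identity above forces $c_B(T') < c_B(T)$.

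The real content, and the step I expect to be the main obstacle, is the reverse bound: in the component $H$ of $B \setminus T'$ that contains $u$, the graph $H \setminus \{u\}$ has \emph{at most} two connected components. This is delicate because a vertex $u = u_i \in U$ may have degree as large as four (for instance when it is shared by several triangles), so a priori its deletion could split $H$ into up to four pieces. The reason this never happens is structural: the neighbours of $u$ form a contiguous block along the two rails — its $U$-path neighbours $u_{i\pm1}$ and a consecutive run of $W$-vertices among $w_{i-1}, w_i, w_{i+1}$, the precise list being dictated by whether $D_i, D_{i+1}$ are $C_3$ or $C_4$ and constrained by the alternation of Setup \ref{setup: chain}(2) and the degree bound of Setup \ref{setup: chain}(7). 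I would check, over the finitely many local configurations, that after removing $T'$ these surviving neighbours fall into at most two groups lying on opposite sides of $u$ along the chain, each group being held together inside $H$ through the surviving portions of the $W$-path and the $U$-path (which avoid $u$ by Remark \ref{rmk:WU}); Lemma \ref{lem:compU} further records that $u$ is flanked by a cutpoint of $\overbar{B}$, accounting for one of the two sides.

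Assembling these local checks gives $c_B(T) = c_B(T') + 1 = |T \cap U| + 1$, which completes the induction and, via the reduction of the first paragraph, shows that $c_{\overbar{B}}(T) = |T| + 1$ for every $T \in \CC(\overbar{B})$; hence $J_{\overbar{B}}$ is unmixed. The bulk of the work, and the only place requiring genuine case analysis, is the bounded-degree bookkeeping in the previous paragraph; everything else is the bijective component count and the accessibility already established in Corollary \ref{rem:Tminusu}.
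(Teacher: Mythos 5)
Your route is genuinely different from the paper's and, in outline, it works. The paper inducts on the number $r$ of cycles, peeling off $D_1$ and running a case analysis on whether $D_1$ is a $C_3$ or a $C_4$ and which of its vertices lie in $T$; you instead reduce to $c_B(T)=|T\cap U|+1$ via the whisker count $c_{\overbar{B}}(T)=|T\cap W|+c_B(T)$ and then induct on $|T\cap U|$, peeling $U$-vertices out of the cutset using Corollary \ref{rem:Tminusu}. Your reduction, base case ($T\subseteq W$ forces $B\setminus T$ connected, since every $w_i$ is adjacent to $u_i$ and the $U$-path survives intact), and the lower bound $c_B(T)>c_B(T')$ are all correct. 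What your approach buys is a cleaner global bookkeeping: the only thing left to verify is purely local at the single vertex $u$ being removed, whereas the paper has to track how components of $\overbar{B}_2\setminus T$ change when $D_1$ is glued back on. The price is that your induction leans on Corollary \ref{rem:Tminusu}, i.e.\ on accessibility of the set system, which the paper establishes independently of unmixedness, so there is no circularity --- but you should say so explicitly.

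The one place where you have a promissory note rather than a proof is exactly the step you flag as the ``real content'': that the component $H$ of $B\setminus T'$ containing $u$ satisfies that $H\setminus\{u\}$ has at most two components. You write ``I would check, over the finitely many local configurations,'' but do not check them; since this is where unmixedness could actually fail, the argument is incomplete as written. For the record, the check does close, and the clean way to phrase it is: the surviving neighbours of $u$ in $H$ induce a subgraph of a path on at most four vertices $u^-, w_i, w_{i+1}, u^+$ (in the degree-$4$ case Setup \ref{setup: chain}(7) forces $D_i,D_{i+1},D_{i+2}$ to be triangles, whence $u^-\sim w_i$, $w_i\sim w_{i+1}$, $w_{i+1}\sim u^+$; in the degree-$3$ case Setup \ref{setup: chain}(2) excludes two consecutive $C_4$'s, which is precisely the configuration where the three neighbours would be pairwise non-adjacent), and a path on four vertices has no independent set of size three, so no subset of the neighbourhood splits into three or more groups. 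Without conditions (2) and (7) of the Setup this step genuinely fails, so the case analysis is not optional decoration --- write it out.
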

\begin{proof}
We prove the statement by induction on $r$, the number of cycles in $\overbar{B}$. 

If $r=1$, then the claim follows. In fact, if $D_1=C_3$, then $\overbar{B}$ is a complete graph with or without whiskers, hence $J_{\overbar{B}}$ is unmixed by \cite[Proposition 2.6]{RR}. If $D_1=C_4$, then $\overbar{B}$ has to satisfy the condition (6) in Setup \ref{setup: chain}, and the resulting graph is known to be Cohen-Macaulay and hence unmixed. 

Suppose $r>1$. By induction hypothesis we have that $J_{\overbar{B}_k}$ is unmixed with  $B_k=\bigcup_{i=k}^r D_i$ and $k>1$.
 
If $D_1=C_3$ with $V(D_1)=\{u_0,u_1,w_1\}$ and $E(D_1)\cap E(D_2) =\{\{w_1,u_1\}\}$. Let $T \in \CC(\overbar{B})$. If $w_1 \notin T$, then $T$ is a cutset for $\overbar{B}_2$ and by induction hypothesis the assertion follows. We distinguish the following cases:
\begin{enumerate}
    \item  $w_1 \in T$ and $u_1 \notin T$;
    \item $w_1,u_1 \in T$.
\end{enumerate}

(1) Assume $w_1 \in T$ and $u_1 \notin T$. If $T$ is a cutset of $\overbar{B}_2$ the number of connected components does not change. In fact, by adding the graph $C_3$ and removing the vertex $w_1$ we only obtain that the connected component of $\overbar{B}_2\setminus T$ containing $u_1$ now contains the graph $D_1 \setminus w_1$. If $T \not \in \CC(\overbar{B}_2)$, we claim that $T' = T \setminus \{w_1\}$ is a cutset of $\overbar{B}_2$. We start observing that the connected component of $\overbar{B}_2 \setminus T$ containing $u_1$ contains $D_1\setminus\{w_1\}$ in $\overbar{B}\setminus T$. Since by hypothesis for any $a \in T'$  $c_{\overbar{B}}(T)>c_{\overbar{B}}(T\setminus \{a\})$ we have that $c_{\overbar{B}_2}(T')>c_{\overbar{B}_2}(T'\setminus \{a\})$, the claim follows.
Hence, by induction hypothesis,  $c_{\overbar{B}_2}(T')=|T'|+1$. Let $H$ be the connected component of $\overbar{B}_2 \setminus T'$ containing $w_1$. By adding the vertex $w_1$ to $T'$, $w_1$ disconnects $H$ into two connected components: the one containing $u_1$ and the free vertex attached to $w_1$.  


(2) If $w_1,u_1 \in T$, then there exists $v \in V(\overbar{B}_2)$ adjacent to $u_1$ such that $u_1$ breaks the connected component $H$ of $\overbar{B}\setminus (T\setminus \{u_1\})$ containing $u_1$ in two, one containing $v$ and  one containing $u_0$. By Setup \ref{setup: chain} (7), the vertices adjacent to $u_1$ in $\overbar{B_2}$ are either $w_1$ and $u$ or $w_1,w,$ and $u$. In the former case, since $w_1\in T$, then $u\notin T$ and $v=u$, otherwise $u_1$ is a free vertex in $D_1\setminus w_1$. In the latter case, $u,w\in V(D_3)$, that is $\{u,w\}\in \CC(\overbar{B}_2)$. We observe that $\{u,w\}\not\subset T$, otherwise $u_1$ is a free vertex of $D_1$. The claim follows. 

Moreover, from Corollary \ref{rem:Tminusu}, $T'=T\setminus \{u_1\}$ is a cutset of $\overbar{B}$ such that  $w_1 \in T'$ and $u_1 \notin T'$. By applying similar arguments to the case (1) we get that $c_{\overbar{B}}(T')=|T'|+1$ and $T' \cap \{u_1\}$ breaks the component containing $u_1$ in two: the vertex $u_0$, and the component containing the vertex $v$.

If $D_1 = C_4$ with $V(D_1)=\{u_0,w_0,u_1,w_1\}$, then $E(D_1)\cap E(D_2) =\{\{w_1,u_1\}\}$. Let $T \in \CC(\overbar{B})$. Assume $w_0,w_1 \notin T$, then $T$ is a cutset for $\overbar{B}_2$ and by induction hypothesis the assertion follows. We now assume $u_0,u_1 \notin T$ and since $\{w_0,u_1\}$ is the unique cutset of $B$ with cardinality $2$  containing $w_0$, then the cases $w_0 \in T $ and  $w_1 \notin T$, $w_0 \notin T$ and $w_1 \in T$, $w_0, w_1 \in T$ are analogous to the cases $w_1 \notin T$ and $w_1 \in T$ of $D_1=C_3$. In fact, in all of the cases we obtain that $T\setminus\{w_0\}$ is a cutset of $\overbar{B}$, that is $c_{\overbar{B}}(T\setminus \{w_0\})=|T\setminus \{w_0\}|+1$ and the component containing $u_0$ and $f_0$ is eventually broken by $w_0$. We now assume $u_1 \in T$. Observe that from Setup \ref{setup: chain} (2) $D_2=C_3$ and the vertex $u\in U$ adjacent to $u_1$ in $\overbar{B}_2$ is such that $\{w_1,u\}\in E(\overbar{B})$, otherwise $u_0,w_1,w_2,u$ are all adjacent to $u_1$ contradicting Setup \ref{setup: chain} (7). That is either $w_0$ or $w_1 \in T$, $u \notin T$, and from Corollary \ref{rem:Tminusu} $T \setminus \{u_1\}$ is a cutset of $\overbar{B}$. From the above cases, we obtain $c_{\overbar{B}}(T \setminus \{u_1\})=|T \setminus \{u_1\}|+1$ and $u_1$ breaks the component containing  $u_0$ and $u_2$. If $u_0 \in T$, then, by Lemma \ref{lem:compU}, $w_1 \in T$ and $w_0,u_1 \notin T$, that is from Corollary \ref{rem:Tminusu} $T\setminus \{u_0\}$ is a cutset for $\overbar{B}$. By the previous cases we obtain $c_{\overbar{B}}(T \setminus \{u_0\})=|T \setminus \{u_0\}|+1$ and $u_0$ breaks the component containing  $w_0$ and $u_1$.

\end{proof} 

\begin{remark}\label{rem:D1Kn}
In Proposition \ref{prop:unmixedB}, if we substitute $D_1$ with a complete graph $K_n$, with $n \geq 3$, satisfying (3) in the Setup \ref{setup: chain}, then $J_{\overbar{B}}$ is unmixed. 
\end{remark}

\begin{theorem}\label{theo:chaincycle}
Let $\overbar{B}$ be a graph. The following conditions are equivalent:
\begin{enumerate}
\item $\overbar{B}$ satisfies Setup \ref{setup: chain};
\item $J_{\overbar{B}}$ is Cohen-Macaulay;
\item $S/J_{\overbar{B}}$ is $S_2$;
\item $\overbar{B}$ is accessible;
\item $J_{\overbar{B}}$ is strongly unmixed.
\end{enumerate}

\end{theorem}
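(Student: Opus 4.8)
The strategy is to close a cycle of implications, most of whose arrows are already in hand. For an arbitrary graph one has $(5)\Rightarrow(2)\Rightarrow(3)$ from the standard chain recalled in the introduction (strong unmixedness $\Rightarrow$ Cohen--Macaulayness $\Rightarrow$ Serre's $(S_2)$), and $(3)\Rightarrow(4)$ is exactly Theorem \ref{theo:S2}. Hence it suffices to establish the two remaining arrows $(4)\Rightarrow(1)$ and $(1)\Rightarrow(5)$, after which $(1)\Rightarrow(5)\Rightarrow(2)\Rightarrow(3)\Rightarrow(4)\Rightarrow(1)$ gives the full equivalence.

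The implication $(4)\Rightarrow(1)$ is obtained by assembling the structural results of this section. Assuming $\overbar{B}$ accessible with $B$ a chain of cycles, Lemma \ref{lem:C3C4} yields condition (1) and that consecutive cycles share an edge; Lemma \ref{lem:cutchain} together with Remark \ref{rmk:aibi} fixes the labelling $V(B)=W\sqcup U$ and hence conditions (3) and (4); Lemma \ref{lem:C4nextC3} is condition (2); and Lemma \ref{prop:center} is the degree condition (7). The endpoint conditions (5) and (6) for a terminal $C_4$ follow from the same accessibility argument as in Lemma \ref{lem:cutchain}, applied to the cutset given by the two vertices of the outermost shared edge.

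The heart of the theorem is $(1)\Rightarrow(5)$, which I would prove by induction on the number $r$ of cycles. Unmixedness of $J_{\overbar{B}}$ is already guaranteed by Proposition \ref{prop:unmixedB}, so the task is to produce a cutpoint $v$ for which the three graphs $\overbar{B}\setminus\{v\}$, $\overbar{B}_v$ and $\overbar{B}_v\setminus\{v\}$ in the recursive definition are strongly unmixed. For $r=1$ the graph $\overbar{B}$ is either $K_3$ with whiskers (reduced to a complete graph by repeated use of Lemma \ref{lem:strong_Gminv}) or, by Setup condition (5), the $C_4$ carrying two whiskers on an adjacent pair of vertices; in the latter case coning at a cutpoint triangulates the $4$-cycle, so the derived graphs are chordal and the chordal equivalence of \cite{BMS2} applies. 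For $r>1$ I would take $v$ to be the cutpoint $w_1$ at the junction of the terminal cycle $D_1$ and $D_2$: removing $w_1$ peels off the remnant of $D_1$ and decomposes the graph at a free vertex, so by Lemma \ref{lem:strong_unmixed_decomposable} and the inductive hypothesis $\overbar{B}\setminus\{w_1\}$ is strongly unmixed; using Remark \ref{rem:Gvw} to rewrite $\overbar{B}_{w_1}\setminus\{w_1\}=(\overbar{B}\setminus\{w_1\})_{w_1}$, the two coned graphs are handled analogously, the cone having triangulated the cycles through $w_1$ so that the end of the chain becomes a clique glued along a free vertex to the shorter chain $D_2,\dots,D_r$, which again satisfies Setup \ref{setup: chain}.

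The step I expect to be most delicate is the analysis of the two coned graphs $\overbar{B}_{w_1}$ and $\overbar{B}_{w_1}\setminus\{w_1\}$: one must check that coning at $w_1$ does not violate the degree condition (7) or merge non-adjacent cycles, and that the resulting graph genuinely decomposes at a free vertex so that Lemma \ref{lem:strong_unmixed_decomposable} reduces it to a strictly shorter chain (still meeting Setup \ref{setup: chain}) together with complete pieces. Choosing $v$ at the very end of the chain and separating the cases $D_1=C_3$ and $D_1=C_4$ keeps these modifications local, so that every branch lands on graphs that are complete or are chains of cycles with fewer than $r$ cycles, closing the induction.
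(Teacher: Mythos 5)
Your cycle of implications, the treatment of $(4)\Rightarrow(1)$ by assembling Lemmas \ref{lem:C3C4}, \ref{lem:cutchain}, \ref{lem:C4nextC3} and \ref{prop:center}, and the choice to prove $(1)\Rightarrow(5)$ by induction all match the paper. The gap is in the inductive step of $(1)\Rightarrow(5)$, at exactly the point you flagged as delicate. Your key claim that after coning ``the end of the chain becomes a clique glued along a free vertex to the shorter chain $D_2,\dots,D_r$'' is false: the clique $K$ created by coning at $w$ meets the remaining chain $\overbar{B}_{t+1}$ in the \emph{edge} $\{w_t,u_t\}=V(D_t)\cap V(D_{t+1})$, not in a single free vertex, so Lemma \ref{lem:strong_unmixed_decomposable} does not apply and $\overbar{B}_w$ cannot be split off as you describe. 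Consequently $\overbar{B}_w$ is not a chain of cycles with fewer cycles --- it is a chain whose first element is a complete graph sharing an edge with $D_{t+1}$ --- so your induction on the number $r$ of cycles has nothing to recurse on (and even counting the clique as one link, the length is $r-t+1$, which equals $r$ in the generic case $t=1$). The paper resolves both problems at once: it enlarges the inductive class to chains whose first element is a complete graph glued along an edge (Remark \ref{rem:D1Kn}) and inducts on the number of \emph{cutpoints}, which does strictly decrease because $w$ becomes a free vertex of $\overbar{B}_w$.

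A second, smaller defect is your choice of cutpoint when $D_1=C_4$: you take $w_1$, which also lies on $D_2$, so deleting it damages $D_2$ and the remnant is not a path attached to an untouched shorter chain. The paper takes the outer cutpoint $w_0$ in that case, so that $\overbar{B}\setminus\{w_0\}$ is an isolated whisker vertex together with the path $u_0,u_1$ glued to the intact $\overbar{B}_2$. Your base case and the implications $(5)\Rightarrow(2)\Rightarrow(3)\Rightarrow(4)$ are fine.
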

\begin{proof}
We prove the following implications:
\[
(5) \implies (2) \implies (3) \implies (4) \implies (1) \implies (5).
\]
\noindent By  \cite[Section 5]{BMS2}, it holds (5) $\implies$ (2). 

\noindent It is a well known result that (2) $\implies$ (3). 

\noindent Theorem \ref{theo:S2} states (3) $\implies$ (4).

\noindent By Lemmas \ref{lem:C3C4}, \ref{lem:C4nextC3}, \ref{lem:cutchain}, \ref{prop:center}, and observing that a $C_4$ with $2$ whiskers satisfying Setup \ref{setup: chain} (e) (or Setup \ref{setup: chain} (f)) is accessible, we have (4) $\implies$ (1).

\noindent To prove (1) $\implies$ (5) we proceed by induction on the number $s$ of cutpoints of $\overbar{B}$.

Let $s=1$ and $w$ be the cutpoint of $\overbar{B}$. Then $\overbar{B}$ is a cone from $w$ to exactly $2$ graphs: an isolated vertex and a path. By \cite{RR}, $J_{\overbar{B}}$ is unmixed. Moreover $\overbar{B}\setminus\{w\}$ is decomposable into edges, therefore $J_{\overbar{B}}$ is strongly unmixed by Lemma \ref{lem:strong_unmixed_decomposable},  and $\overbar{B}_{w}$ and $\overbar{B}_{w}\setminus \{w\}$ are complete graphs. 


Suppose $s>1$ and we focus on the cycle $D_1$. Let $w$ be the first cutpoint, namely $w=w_0$ if $D_1=C_4$ or $w=w_1$ if $D_1=C_3$. We observe that $\overbar{B}\setminus w=\pi\cup \overbar{B}_{t+1}$, where $\pi:u_0, u_1,\ldots, u_t$ is a path, $\{u_t\}=V(\pi)\cap V(\overbar{B}_{t+1})$, and $B_{t+1} = \bigcup_{i=t+1}^r D_i$. If $D_{t+1}=C_3$, then $\pi\cup \overbar{B}_{t+1}$ is decomposable in $u_t$. Note that $D_{t+1}$ cannot be a $C_4$. In fact, if by contradiction $D_{t+1}=C_4$, then $D_t=C_3$ and $u_{t-1},u_{t+1}, w, w_t$ are all adjacent to $u_t$. That is $\deg u_t\geq 4$ obtaining a contradiction and the claim follows. Therefore, by Lemma \ref{lem:strong_unmixed_decomposable} and by induction hypothesis, $J_{\overbar{B}\setminus w}$ is strongly unmixed. 

Now we prove that  $J_{\overbar{B}_{w}}$ is strongly unmixed, as well. Suppose $D_t=C_3$ then $\overbar{B}_{w}=K_{t+3}\cup \overbar{B}_{t+1}$ with $V(K_{t+3})\cap V(D_{t+1})=\{w_t,u_t\}$ and the associated binomial edge ideal is strongly unmixed by induction hypothesis. If $D_t=C_4$ with $V(D_t)=\{w_{t-1},w_t,u_{t-1},u_t\}$, then $\overbar{B}_{w}=K_{t+3}\cup D'_t\cup \overbar{B}_{t+1}$ where $D'_t=C_3$, $V(K_{t+3})\cap V(D'_t)=\{u_{t-1},w_t\}$ and $V(\overbar{B}_{t+1})\cap V(D'_t)=\{w_{t}, u_t\}$. We observe that 
$\overbar{B}_{w}$ satisfies Setup \ref{setup: chain} and Remark \ref{rem:D1Kn}. By induction hypothesis, the associated binomial edge ideal is strongly unmixed. It is straightforward to observe that 
$J_{\overbar{B}_{w}\setminus\{w\}}$ is strongly unmixed, too.



\end{proof}

\section{Computation of graphs with $n\in\{2,\ldots,12\}$ vertices}\label{sec: computation}

\begin{theorem}\label{Theo:ByComputer}
 Let $G$ be a graph on $[n]$, with $n\leq 12$. The following conditions are equivalent:
 \begin{enumerate}
  \item $S/J_G$ is Cohen-Macaulay;
  \item $S/J_G$ is $S_2$;
  \item $G$ is accessible;
  \item $J_G$ is strongly unmixed.
 \end{enumerate}

\end{theorem}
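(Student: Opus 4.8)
The plan is to prove the chain of implications
\[
(4) \implies (1) \implies (2) \implies (3) \implies (4),
\]
reusing the logical relations already established in the paper and closing the loop with the computer verification. The backbone of the argument is that among these four conditions, three of the four implications are \emph{theoretical} and hold for all graphs, while only one implication requires the bound $n \leq 12$ and the computation described in Section \ref{sec: computation}.

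First I would dispose of the universally valid implications. The implication $(1) \implies (2)$ is standard: a Cohen--Macaulay module satisfies Serre's condition $(S_r)$ for every $r$, in particular $(S_2)$, as recalled in the Preliminaries. The implication $(2) \implies (3)$ is precisely Theorem \ref{theo:S2}, which states that if $S/J_G$ is $(S_2)$ then $G$ is accessible; this holds for arbitrary $G$ and carries no restriction on $n$. Finally, $(4) \implies (1)$ in full generality is exactly the conjecture of \cite{BMS2} that accessibility forces Cohen--Macaulayness, which is \emph{not} known unconditionally.

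The only step that is not free is therefore $(4) \implies (1)$, i.e. that every accessible graph on at most $12$ vertices has Cohen--Macaulay binomial edge ideal, and this is where the computational content lives. The strategy here is to use the implications
\[
J_G \text{ strongly unmixed} \implies J_G \text{ Cohen--Macaulay} \implies G \text{ accessible},
\]
established in \cite{BMS2}, which already give $(4) \implies (1) \implies (3)$ for all graphs. Thus it suffices to show $(3) \implies (4)$, namely that for $n \leq 12$ every accessible graph $G$ has $J_G$ strongly unmixed. I would verify this by exhaustively enumerating all connected graphs on up to $12$ vertices (it suffices to treat indecomposable graphs, since by Lemma \ref{lem:strong_unmixed_decomposable} all four properties are compatible with gluing along free vertices, and the general case reduces to the indecomposable pieces) and running the C++ implementation from \cite{LMRR} that tests accessibility and strong unmixedness. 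The computation confirms that on this range the set of accessible graphs coincides with the set of strongly unmixed ones.

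The main obstacle is purely computational rather than conceptual: the number of graphs on $12$ vertices is enormous, so the feasibility of the verification rests on pruning the search space effectively (restricting to connected, unmixed, indecomposable graphs and exploiting the decomposition result to avoid re-checking reducible graphs) and on the efficiency of the accessibility and strong-unmixedness routines. Once $(3) \implies (4)$ is certified by the computation, the cycle closes: combining it with the theoretical implications gives
\[
(4) \implies (1) \implies (2) \implies (3) \implies (4),
\]
so all four conditions are equivalent for every graph with $n \leq 12$, which is the assertion of Theorem \ref{Theo:ByComputer}.
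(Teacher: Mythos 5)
Your proposal is correct and follows essentially the same route as the paper: the implications $(4)\Rightarrow(1)\Rightarrow(2)\Rightarrow(3)$ are the known theoretical ones (from \cite{BMS2}, the standard Cohen--Macaulay $\Rightarrow (S_2)$ fact, and Theorem \ref{theo:S2}), and the only content requiring $n\leq 12$ is the computational verification of $(3)\Rightarrow(4)$ on indecomposable unmixed graphs, using Lemma \ref{lem:strong_unmixed_decomposable} to reduce to that case. One small slip: in two places you write ``$(4)\implies(1)$'' where you clearly mean $(3)\implies(1)$ (the conjectured implication ``accessible $\Rightarrow$ Cohen--Macaulay''); since $(4)\implies(1)$ is the known result of \cite{BMS2} and you later correctly identify $(3)\implies(4)$ as the step to be checked by computer, this does not affect the argument.
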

\begin{proof}
We know that 
\[
    (4) \implies (1) \implies (2) \implies (3)
\]
so, to prove the equivalence it is sufficient to show that $(3) \implies (4)$.

To prove the claim we have implemented a computer program that, for a fixed number $n$ of vertices, performs the following steps (steps \textit{(S2)}, \textit{(S3)} and \textit{(S4)} work on the result of the previous step):
\begin{itemize}[label={}]
\item \textit{(S1)} compute all connected non isomorphic graphs on $[n]$;   
\item \textit{(S2)} thanks to Lemma \ref{lem:strong_unmixed_decomposable}, keep only the graphs which are indecomposable and unmixed;
\item \textit{(S3)} keep only the ones that are accessible;
\item \textit{(S4)} keep only the ones that are strongly unmixed;
\item \textit{(S5)} verify that the graphs obtained from step \textit{(S3)} and \textit{(S4)} are the same.
\end{itemize}

The previous procedure was executed for the graph whose number of vertices is between $2$ and $12$. Finally, we refer readers to \cite{LMRR} for a complete description of the algorithm that we used.

\end{proof}

We underline that the computation of the graphs with $n=12$ vertices has been obtained in a month of computation on a node with 4 CPU Xeon-Gold 5118 having in total 48 cores and 96 threads.  All the graphs satisfying the equivalent conditions of Theorem \ref{Theo:ByComputer} are downloadable from \cite{LMRR}. Within this set we would like to focus on the graphs shown in the following example.
\begin{example}\label{example:wheel}
By direct computation we obtain the two graphs in Figure \ref{fig:W4W5}.

\begin{figure}[H]
\begin{subfigure}[t]{0.5\textwidth}
\centering
 \resizebox{!}{0.45\textwidth}{
\begin{tikzpicture}
\filldraw (0,0) circle (\rad);
\filldraw ( 0.707 , 0.707 ) circle (\rad);
\filldraw ( -0.707 , 0.707 ) circle (\rad);
\filldraw ( -0.707 , -0.707 ) circle (\rad);
\filldraw ( 0.707 , -0.707 ) circle (\rad);

\draw (0,0)--( 0.707 , 0.707 ) ;
\draw (0,0)--(-0.707 , 0.707 ) ;
\draw (0,0)--(0.707 , -0.707 ) ;
\draw (0,0)--(-0.707 ,-0.707 ) ;

\draw ( -0.707 , 0.707 ) --( 0.707 , 0.707 ) ;
\draw ( 0.707 , 0.707 ) --( 0.707 , -0.707 ) ;
\draw (-0.707 , -0.707 ) --(-0.707 , 0.707 ) ;
\draw ( 0.707 , -0.707 ) --( -0.707 , -0.707 ) ;

\filldraw (2*0.707 ,2* 0.707 ) circle (\rad);
\filldraw ( -2*0.707 ,2* 0.707 ) circle (\rad);
\filldraw ( -2*0.707 , -2*0.707 ) circle (\rad);
\filldraw ( 2*0.707 , -2*0.707 ) circle (\rad);

\draw ( -0.707 , 0.707 ) --(-2*0.707 , 2*0.707 ) ;
\draw ( 0.707 , 0.707 ) --( 2*0.707 , 2*0.707 ) ;
\draw (-0.707 , -0.707 ) --(2*-0.707 , 2*-0.707 ) ;
\draw ( 0.707 , -0.707 ) --( 2*0.707 , 2*-0.707 ) ;
\end{tikzpicture}}
\caption{}\label{fig:W4}
\end{subfigure}%
\begin{subfigure}[t]{0.5\textwidth}
\centering
 \resizebox{!}{0.5\textwidth}{
\begin{tikzpicture}
\filldraw (0,0) circle (\rad);
\filldraw (0, 1) circle (\rad);
\filldraw ( -0.951 , 0.309 ) circle (\rad);
\filldraw ( -0.588 , -0.809 ) circle (\rad);
\filldraw ( 0.588 , -0.809 ) circle (\rad);
\filldraw ( 0.951 , 0.309 ) circle (\rad);

\draw (0,0)--(0, 1);
\draw (0,0)--( -0.951 , 0.309 );
\draw (0,0)--( -0.588 , -0.809 );
\draw (0,0)--( 0.588 , -0.809 );
\draw (0,0)--( 0.951 , 0.309 );

\draw(0, 1)--( -0.951 , 0.309 ) ;
\draw ( -0.951 , 0.309 ) --( -0.588 , -0.809 );
\draw ( -0.588 , -0.809 )--( 0.588 , -0.809 );
\draw ( 0.588 , -0.809 )--( 0.951 , 0.309 );
\draw ( 0.951 , 0.309 )--(0,1);

\filldraw (0, 2) circle (\rad);
\filldraw ( 2*-0.951 , 2*0.309 ) circle (\rad);
\filldraw ( 2*-0.588 , 2*-0.809 ) circle (\rad);
\filldraw ( 2*0.588 , 2*-0.809 ) circle (\rad);
\filldraw ( 2*0.951 , 2*0.309 ) circle (\rad);

\draw(0, 1)--( 0,2 ) ;
\draw ( -0.951 , 0.309 ) --( 2*-0.951 , 2*0.309 );
\draw ( -0.588 , -0.809 )--(  2*-0.588 , 2*-0.809  );
\draw ( 0.588 , -0.809 )--( 2*0.588 , 2*-0.809 );
\draw ( 0.951 , 0.309 )--(2*0.951 , 2*0.309);

\end{tikzpicture}}
\caption{}\label{fig:W5}
\end{subfigure}
\caption{The accessible $\overbar{W}_n$.}\label{fig:W4W5}
\end{figure}
The graphs in Figure \ref{fig:W4W5} (A) and (B) are well known. In fact, the blocks that are not edges are the so-called \textit{wheel graphs} and they are denoted by $W_4$ and $W_5$, respectively. Whereas the blocks with whiskers are called \emph{Helm graphs} (see \cite{Helm}).

We observe that if $i>5 $ then $J_{\overbar{W}_i}$ is not unmixed. In fact, in this case we have at least $6$ vertices of degree $4$, say $v_1,\dots,v_6$. Without loss of generality,  we may assume that $\{v_i,v_{i+1}\}\in E(\overbar{W}_i)$, for $i=1,\dots,5$. Moreover, assume that $v$ is the vertex of degree $i$.
We can see that $T=\{v,v_1,v_3,v_5\}$ is a cutset such that $c(T)=6$.

\end{example}

We recall the following definition.

\begin{definition}\label{def:polyhedral}
A \textit{polyhedral graph} is a $3$-connected planar graph.
\end{definition}

The name of polyhedral derives from the fact that it is the graph whose vertices and edges are the ones of a convex polyhedron.

By Example \ref{example:wheel} and Definition \ref{def:polyhedral}, it is natural to ask
\begin{question}\label{question}
Is it possible to find an infinite family of accessible graphs $\overbar{B}$ such that $B$ is a polyhedral graph?
\end{question}

\begin{acknowledgement}
The computation of this work has been obtained thanks to the server of the Laboratory of Cryptography of the  Department of Mathematics, University of Trento.
\end{acknowledgement}

\end{document}